\documentclass[final, onefignum,onetabnum]{siamart190516}
\usepackage[english]{babel}
\usepackage{amsfonts}
\usepackage{graphicx}
\usepackage{subcaption}
\usepackage{epstopdf}
\usepackage{algorithm}
\usepackage{bm}
\usepackage{amsmath, amssymb}
\numberwithin{equation}{section}
\usepackage{enumitem}
\usepackage{siunitx}
\usepackage{booktabs}
\usepackage[sort]{cite}

\newtheorem{remark}{Remark}[section]

\newcommand{\myR}{\mathcal{R}}
\newcommand{\myQ}{\mathcal{Q}}
\newcommand{\myI}{\mathcal{I}}
\renewcommand{\div}{\operatorname{div}}

\DeclareMathOperator{\Tr}{Tr}

\newcommand{\bu}{\bm{u}}
\newcommand{\bv}{\bm{v}}
\newcommand{\bx}{\bm{x}}
\newcommand{\bnu}{\bm{\nu}}
\newcommand{\bmu}{\bm{\mu}}
\newcommand{\bl}{\bm{\lambda}}
\newcommand{\bchi}{\bm{\chi}}
\newcommand{\RM}{{\bf{RM}}}

\title{
Flux-mortar mixed finite element methods
on non-matching grids\thanks{
Submitted to the editors DATE.
\funding{
We thank the Deutsche Forschungsgemeinschaft (DFG, German Research Foundation) for supporting this work by funding SFB 1313, Project Number 327154368.
			}
		}
	}

\headers{Flux-mortar mixed finite element methods on non-matching grids}{W. M. Boon, D. Gl\"aser, R. Helmig, and I. Yotov}
\author{Wietse M. Boon\thanks{
			Department of Mathematics, KTH Royal Institute of Technology, 114 28 Stockholm, Sweden;
			\nobreak{\email{wietse@kth.se}}.
			}
		\and Dennis Gl\"aser\thanks{
			Institute for Modelling Hydraulic and Environmental Systems, University of Stuttgart, 70569 Stuttgart, Germany; \nobreak{\email{dennis.glaeser@iws.uni-stuttgart.de}, \email{rainer.helmig@iws.uni-stuttgart.de}}.
			}
		\and Rainer Helmig\footnotemark[3]
		\and Ivan Yotov\thanks{
		  Department of Mathematics, University of Pittsburgh, Pittsburgh, PA 15260, USA;
                  \nobreak{\email{yotov@math.pitt.edu}}.
                  Supported in part by NSF grant DMS 1818775.
			}
		}

\begin{document}

\maketitle

\begin{abstract}
We investigate a mortar technique for mixed finite element
approximations of Darcy flow on non-matching grids in which the normal
flux is chosen as the coupling variable. It plays the role of a
Lagrange multiplier to impose weakly continuity of pressure.  In the
mixed formulation of the problem, the normal flux is an essential
boundary condition and it is incorporated with the use of suitable
extension operators. Two such extension operators are considered and
we analyze the resulting formulations with respect to stability and
convergence. We further generalize the theoretical results, showing
that the same domain decomposition technique is applicable to a class
of saddle point problems satisfying mild assumptions. An example of
coupled Stokes-Darcy flows is presented.
\end{abstract}

\begin{keywords}
  Flux-mortar method, mixed finite element, domain decomposition,
  non-matching grids, a priori error analysis
\end{keywords}

\begin{AMS}
	65N12, 65N15, 65N55
\end{AMS}

\section{Introduction}
\label{sec:introduction}

The mortar mixed finite element method \cite{ACWY,APWY} has proven to
be an efficient and flexible domain decomposition technique for
solving a wide range of single-physics or multiphysics problems
described by partial differential equations in mixed formulations
coupled through interfaces with non-matching grids. The key attribute
of this method is the introduction of a Lagrange multiplier, referred
to as the mortar variable, on the interface that enforces continuity
of the solution. The method can be implemented as an iterative
algorithm that requires only subdomain solves on each iteration.

We consider porous media flow in a mixed formulation as our leading
example. In this context, the two most natural choices for the mortar
variable are the pressure or the normal Darcy flux. In the case of
matching grids, domain decomposition methods with these two types of
Lagrange multipliers were introduced in \cite{GW}. In \cite{ACWY}, a
mortar mixed finite element method on non-matching grids with a pressure mortar was
developed. A multiscale version of the method, referred to as the
multiscale mortar mixed finite element method (MMMFEM), was developed
in \cite{APWY}. In this case pressure continuity is enforced by
construction and normal flux continuity is enforced in a weak
sense. This strategy has been successfully applied to more general
applications as well, including coupled single-phase and multiphase
flows in porous media \cite{PesWheYot}, nonlinear elliptic problems
\cite{Park-nonlinear-MMMFE}, coupled Stokes and Darcy flows
\cite{GirVasYot,LSY,galvis-sarkis}, and mixed formulations of
linear elasticity \cite{khattatov-yotov}.

In this work, we explore the mortar mixed finite element method in
which the normal flux across each interface acts as the mortar
variable. In this case, normal flux continuity is imposed by
construction and continuity of pressure is imposed weakly. Our
specific interest lies in deriving {\em a priori} error estimates in
the presence of non-matching grids. To the best of our knowledge, such
analysis has not been previously done.

A challenge that arises in this approach is that the normal flux is an
essential boundary condition for mixed Darcy formulations and needs to
be incorporated accordingly. We achieve this by introducing
appropriate extension operators in the definitions of the velocity
spaces in both the continuous and the discrete settings. This involves
solving Neumann problems for each subdomain. We employ a Lagrange
multiplier to remedy both the potential incompatibility of the data as
well as the uniqueness of the solution.

Let us highlight the four main contributions presented in this
work. First, our focus is on non-matching grids and we quantify the
role this non-conformity plays in the context of {\em a priori} error
analysis. Two projection operators are proposed that require separate
analyses and lead to slightly different error estimates.  Second, we
consider a reduction of the problem to a symmetric, positive definite
system that contains only the mortar variable. An iterative scheme is
then proposed to solve this reduced system such that the solution
conserves mass locally at each iteration.  Third, the theoretical
framework is presented in a general setting that is applicable to a
broad class of saddle point problems. Fourth, we explicitly consider
an important example, namely coupled Stokes-Darcy problems,
\cite{LSY,DMQ,galvis-sarkis,GirVasYot}. A key component is developing
a flux-mortar finite element method for Stokes. Previously, only
normal stress mortar methods for Stokes have been considered, with the
mortar variable being used to impose weakly continuity of the velocity
\cite{BenBelgacem-Stokes,GirVasYot,Kim-Lee}. While a velocity Lagrange
multiplier has been employed in domain decomposition methods for
Stokes with matching grids \cite{Pavarino-Widlund,Li-Widlund}, to the
best of our knowledge this is the first Stokes discretization on
non-matching grids with velocity mortar variable. Moreover, we develop
a new parallel domain decomposition method for the Stokes-Darcy
problem, which satisfies velocity or flux continuity at each
iteration. We refer the reader to
\cite{Disc-Quart-Valli-2007,VasWangYot,Galvis-Sarkis-DD} for some of
the previous works on domain decomposition methods for coupled Stokes
and Darcy flows. In these works, flux continuity is either relaxed via
the use of Robin transmission conditions \cite{Disc-Quart-Valli-2007}
or it is satisfied only at convergence using pressure and normal stress
mortars \cite{VasWangYot,Galvis-Sarkis-DD}.

For the sake of clarity of the presentation, we focus on the case of
subdomain and mortar grids being on the same scale. However, the
flux-mortar mixed finite element method can be formulated as a
multiscale method via the use of coarse scale mortar grids, as was
done in \cite{APWY}.  In this case, following the approach in
\cite{Ganis-Yotov}, the method can be implemented using an interface
multiscale pressure basis, which can be computed by solving local
subdomain problems with specified mortar flux boundary data.

We note several similarities and relationships between the flux-mortar
method and existing schemes.  First, as mentioned, our approach is
dual to the pressure mortar technique that is central to the MMMFEM
\cite{ACWY,APWY}. Second, the multiscale hybrid-mixed (MHM) method
\cite{MHM-JCP,MHM-SINUM} similarly introduces flux degrees of freedom
on the interfaces between elements to impose weakly continuity of
pressure. The difference is that the MHM is defined on a single global
grid and it is based on an elliptic formulation, rather than a mixed
formulation.  The MHM is related to a special case of the mixed
subgrid upscaling method proposed in \cite{arbogast}. The latter does
not involve a Lagrange multiplier, but incorporates global flux
continuity via a coarse scale mixed finite element velocity space,
which may include additional degrees of freedom internal to the
subdomains. In contrast, our method reduces to an interface problem
involving only mortar degrees of freedom. Furthermore, the analysis in
\cite{arbogast} does not allow for non-matching grids along the coarse
scale interfaces.  Finally, we note that the flux-mortar mixed finite
element method has been successfully applied in the context of
fracture flow \cite{boon2018robust,nordbotten2019unified} and coupled
Stokes-Darcy flow \cite{Boon2019StokesDarcy}. The analysis in
\cite{boon2018robust} exploits that there is tangential flow along the
fractures and does not cover the domain decomposition framework
considered in this work, while the analysis in
\cite{Boon2019StokesDarcy} focuses on robust preconditioning.

The article is structured as follows. Section~\ref{sec:model_problem}
introduces the model problem and its domain decomposition formulation.
The mixed finite element discretization is
introduced in Section~\ref{sec:discretization} and we present two
projection operators to handle the non-matching grids. The well
posedness of the method is established in
Section~\ref{sec:well-posed}. Section~\ref{sec:a_priori_analysis}
provides {\em a priori} error estimates for the proposed
discretization. The problem is then reduced to an interface
formulation that involves only the mortar variables in
Section~\ref{sec:reduction_to_an_interface_problem}. We generalize
these concepts and results to a more abstract setting in
Section~\ref{sec:generalization_to_elliptic_problems} and show how to
apply them to a general class of saddle point problems. The general
framework is applied to the Stokes-Darcy problem. Finally,
we verify the {\em a priori} error analysis with numerical experiments
in Section~\ref{sec:numerical_results}.

\section{The model problem}
\label{sec:model_problem}

We introduce the flux-mortar method using an accessible model problem
given by the mixed formulation of the Poisson problem.
Let $\Omega \subset \mathbb{R}^n$, $n = 2,3$ be a bounded polygonal
domain. The model problem is
\begin{equation}\label{model}
  \bu = - K \nabla p, \quad \nabla\cdot \bu = f \ \mbox{ in } \Omega,
  \quad p = 0 \ \mbox{ on } \partial\Omega.
  \end{equation}
We will use the terminology common to porous media flow
modeling. Hence, we refer to $\bm{u}$ as the Darcy velocity, $p$ is
the pressure, $K$ is a uniformly bounded symmetric positive-definite
conductivity tensor, and $f\in L^2(\Omega)$ is a source function. We assume
that there exist $0 < k_{min} < k_{max} < \infty$ such that $\forall \bx \in \Omega$,
\begin{equation}\label{K-spd}
  k_{min} \xi^T\xi \le \xi^T K (\bx)\xi \le k_{max} \xi^T\xi, \quad \forall
  \xi \in \mathbb{R}^n.
\end{equation}

We will use the following standard notation. For $G$ a domain in
$\mathbb{R}^n$, $n = 2,3$, or a manifold in $\mathbb{R}^{n-1}$,
the Sobolev spaces on $G$ are denoted by $W^{k, p}(G)$. Let
$H^k(G) := W^{k, 2}(G)$ and $L^2(G) := H^0(G)$. The $L^2(G)$-inner product or duality pairing is
denoted by $(\cdot,\cdot)_G$. For $G \subset \mathbb{R}^n$, let
$$
H(\div, G) = \{\bm{v} \in (L^2(G))^n:
\nabla\cdot \bm{v} \in L^2(G)\}.
$$
We use the following shorthand notation to denote the norms of these spaces:
	\begin{align*}
		\| f \|_{k, G} &:= \| f \|_{H^k(G)}, &
		\| f \|_{G} &:= \| f \|_{0, G}, &
		\| \bm{v} \|_{\div, G}^2 & := \| \bm{v} \|_{H(\div, G)}^2 
                = \| \bm{v} \|_{G}^2 + \| \nabla \cdot \bm{v} \|_{G}^2.
	\end{align*}
%
We use the binary relation $a \lesssim b$ to imply that a
constant $C > 0$ exists, independent of the mesh size $h$, such that
$a \le C b$. The relationship $\gtrsim$ is defined analogously.

The variational formulation of \eqref{model} is: Find
$(\bm{u}, p) \in H(\div, \Omega) \times L^2(\Omega)$ such that
\begin{subequations}\label{weak-model}
	\begin{align}
		(K^{-1} \bm{u}, \bm{v})_\Omega
		- (p, \nabla \cdot \bm{v})_\Omega
		&= 0,
		& \forall \bm{v} &\in H(\div, \Omega), \label{weak-model-1}\\
		(\nabla \cdot \bm{u}, w)_\Omega
		&= (f, w)_\Omega,
		& \forall w &\in L^2(\Omega). \label{weak-model-2}
	\end{align}
\end{subequations}
It is well known that \eqref{weak-model} has a unique solution \cite{boffi2013mixed}.

\subsection{Domain decomposition}
\label{sub:domain_decomposition}

The domain $\Omega$ is decomposed into disjoint polygonal subdomains $\Omega_i$
with $i \in I_\Omega = \{1, 2, \ldots, n_\Omega \}$. Let $\bm{\nu}_i$
denote the outward unit vector normal to the boundary $\partial
\Omega_i$. The $(n - 1)$-dimensional interface between two subdomains
$\Omega_i$ and $\Omega_j$ is denoted by $\Gamma_{ij} :=
\partial \Omega_i \cap \partial \Omega_j$. Each interface
$\Gamma_{ij}$ is assumed to be Lipschitz and endowed with a unique,
unit normal vector $\bm{\nu}$ such that
%
$
	\bm{\nu} := \bm{\nu}_i = -\bm{\nu}_j 
	\text{ on } \Gamma_{ij}, \ i < j.
        $
%
Let $\Gamma := \bigcup_{i < j} \Gamma_{ij}$ and $\Gamma_i := \Gamma
\cap \partial \Omega_i$.  We categorize $\Omega_i$ as an interior
subdomain if $\partial \Omega_i \subseteq \Gamma$, i.e. if none of its
boundaries coincide with the boundary of the domain $\Omega$.
Let $I_{int} := \{ i \in I_\Omega :\ \partial \Omega_i \subseteq \Gamma \}$.
For given $\Omega_i$, let the local velocity and pressure function spaces
$V_i$ and $W_i$, respectively, be defined as
\begin{align*}
V_i := H(\div, \Omega_i), \quad
W_i := L^2(\Omega_i).
\end{align*}
Let the composite function spaces be defined as
\begin{align*}
V := \bigoplus_i V_i, \quad W := \bigoplus_i W_i = L^2(\Omega).
\end{align*}
Let 
\begin{align*}
  V_i^0 := \{ \bm{v}_{h, i} \in V_i :\ (\bm{\nu}_i \cdot \bm{v})|_{\Gamma_i} = 0\},
  \quad V^0 := \bigoplus_i V_i^0.
\end{align*}
The normal flux $\bm{\nu}\cdot\bm{u}$ on $\Gamma$ will be modeled by a Lagrange
multiplier $\lambda \in \Lambda$, with
\begin{align*}
\Lambda &:=
L^2(\Gamma).
\end{align*}
We note that $\Lambda$ has more regularity than the normal trace of $V_i$.
For $\lambda \in \Lambda$, we use a subscript to indicate its relative
orientation with respect to the adjacent subdomains:
\begin{align*}
 	\lambda_i := \lambda, \ \
 	\lambda_j := -\lambda \ \
 	\text{ on } \Gamma_{ij},\ i<j.
\end{align*}
In particular, $\lambda_i$ models $\bnu_i\cdot \bu$ and $\lambda_j$ models
$\bnu_j\cdot \bu$ on $\Gamma_{ij}$.

Next, we associate appropriate norms to the function spaces. The spaces $W$
and $\Lambda$ are equipped with the standard $L^2(\Omega)$ and
$L^2(\Gamma)$ norms, respectively. The space $V$, which  
does not have any continuity imposed on the interfaces, hence $V \not
\subset H(\div, \Omega)$, is equipped with a broken $H(\div)$
norm. Letting $\bm{v}_i = \bm{v}|_{\Omega_i}$, we define
\begin{align*}
	\| \bm{v} \|_V &:= \sum_i \| \bm{v}_{i} \|_{\div, \Omega_i}, &
	\| w \|_W &:= \| w \|_{\Omega}, &
	\| \mu \|_\Lambda &:= \| \mu \|_{\Gamma}.
\end{align*}

\section{Discretization}
\label{sec:discretization}

In this section we describe the flux-mortar mixed finite element method for 
\eqref{weak-model}. For subdomain $\Omega_i$, let $\Omega_{h,i}$ be a shape-regular
tessellation with typical mesh size $h$ consisting of affine finite
elements. The grids $\Omega_{h,i}$ and $\Omega_{h,j}$ may be
non-matching along the interface $\Gamma_{ij}$. Let $V_{h, i} \times
W_{h, i} \subset V_i \times W_i$ be a pair of conforming finite
element spaces that is stable for the mixed formulation of the Poisson
problem, i.e.,
\begin{subequations}
\begin{align}
  & \nabla \cdot V_{h, i} = W_{h, i}, \label{eq: div V = W} \\
  &
  \forall w_{h,i} \in W_{h,i}, \ \exists \, 0 \ne \bm{v}_{h,i} \in V_{h,i}:
  (\nabla\cdot\bv_{h,i},w_{h,i})_{\Omega_i}
  \gtrsim \|\bv_{h,i}\|_{\div, \Omega_i} \|w_{h,i}\|_{\Omega_i}. \label{local-inf-sup}
\end{align}
\end{subequations}
Let $V_{h, i}^0$ denote the subspace of $V_{h, i}$ with zero normal trace on $\Gamma$:
\begin{align*}
	V_{h, i}^0 &:=
	V_{h, i} \cap V_i^0, &
	V_h^0 &:= \bigoplus_i V_{h, i}^0.
\end{align*}
On the other hand, let the normal trace space on $\Gamma_i$ be denoted by $V_{h, i}^\Gamma$:
\begin{align*}
	V_{h, i}^\Gamma
	&:= (\bm{\nu}_i \cdot V_{h, i})|_{\Gamma_i}, &
	V_h^\Gamma
	&:= \bigoplus_i V_{h, i}^\Gamma,
\end{align*}
and let $\mathcal{Q}_{h, i}^\flat: \Lambda \to V_{h, i}^\Gamma$ and
$\mathcal{Q}_h^\flat: \Lambda \to V_h^\Gamma$ be the associated
$L^2$-projections. The reason for the superscript $\flat$ will become
clear shortly.

For the interfaces, we introduce a shape-regular affine tessellation of
$\Gamma_{ij}$, denoted by $\Gamma_{h,ij}$, with a typical mesh size
$h_\Gamma$.  Let the discrete interface space $\Lambda_{h,ij}\subset
L^2(\Gamma_{ij})$ contain continuous or discontinuous piecewise
polynomials on $\Gamma_{h,ij}$. Let $\Gamma_h = \bigcup_{i<j} \Gamma_{h,ij}$ and
$\Lambda_h = \bigoplus_{i<j} \Lambda_{h,ij}$.

An important restriction on $\Gamma_h$ and $\Lambda_h$ is that for $\mu_h \in \Lambda_h$, we assume that
\begin{align} \label{eq: flat mortar condition}
	\| \mu_h \|_{\Gamma_{ij}}
	&\lesssim
	\| \mathcal{Q}_{h, i}^\flat \mu_h \|_{\Gamma_{ij}}
	+ \| \mathcal{Q}_{h, j}^\flat \mu_h \|_{\Gamma_{ij}}, \quad
	\forall \, \Gamma_{ij}.
\end{align}
We emphasize that this is the conventional mortar assumption (see
e.g. \cite{ACWY}) implying that the mortar variable is controlled on
each interface by one of the two neighboring subdomains. The assumption
is easy to satisfy in practice and it has been shown to hold for some
very general mesh configurations \cite{APWY,PenYot}.

Next, let $\myR_{h, i}: \Lambda \to V_{h, i}$ be a bounded extension operator chosen to satisfy one of two properties, which we distinguish using a superscript $\sharp$ or $\flat$. The first option ($\sharp$) is to introduce an extension such that its normal trace has zero jump with respect to the mortar space:
\begin{subequations} \label{eqs: projection properties}
\begin{align} \label{eq: projection to mortar}
	\sum_i (\bm{\nu}_i \cdot \myR_{h, i}^\sharp \lambda, \mu_h)_{\Gamma_i} &= 0, &
	\forall \mu_h &\in \Lambda_h.
\end{align}
On the other hand, a second type of extension operators ($\flat$) is defined using the $L^2$-projection to each trace space $V_{h, i}^\Gamma$ such that:
\begin{align} \label{eq: projection from mortar}
	(\lambda_i - \bm{\nu}_i \cdot \myR_{h, i}^\flat \lambda, \xi_{h, i})_{\Gamma_i} &= 0, &
	\forall \xi_{h, i} &\in V_{h, i}^\Gamma.
\end{align}
\end{subequations}

The construction of these extension operators is described in detail in the following three subsections.
The global extension operator is defined as
$\myR_h \lambda := \bigoplus_{i} \myR_{h,i}\lambda$. 
After choosing $\myR_h$, we continue by defining the composite spaces $V_h$ and $W_h$ as
\begin{align} \label{eq: definition V_h}
	V_h
	&:=
	\bigoplus_i \left(V_{h, i}^0 \oplus \myR_{h, i} \Lambda_h \right)
	= V_h^0 \oplus \myR_h \Lambda_h,
	&
	W_h
	&:=
	\bigoplus_i W_{h, i}.
\end{align}
The two variants of $V_h$ that arise due to the choice of extension
operator are denoted by $V_h^\sharp$ and $V_h^\flat$. We will, from
now on, present the results that concern both variants by omitting the
superscript.

The flux-mortar mixed finite element method is as follows:
Find $(\bm{u}_h^0, \lambda_h, p_h) \in V_h^0 \times \Lambda_h
\times W_h$ such that
\begin{subequations}\label{dd-formulation}
\begin{alignat}2
& \sum_i (K^{-1} (\bm{u}_{h,i}^0 + \myR_{h,i} \lambda_h), \bm{v}_{h, i}^0)_{\Omega_i}
	- (p_{h,i}, \nabla \cdot \bm{v}_{h, i}^0)_{\Omega_i}
	= 0,
	 &\forall \bm{v}_{h, i}^0 \in V_{h,i}^0, \label{dd-1} \\
& \sum_i (K^{-1} (\bm{u}_{h,i}^0 + \myR_{h,i} \lambda_h), \myR_{h,i} \mu_h)_{\Omega_i}
	- (p_{h,i}, \nabla \cdot \myR_{h,i} \mu_h)_{\Omega_i}
	= 0,
	 &\forall \mu_h \in \Lambda_h, \label{dd-2} \\
& \sum_i (\nabla \cdot (\bm{u}_{h,i}^0 + \myR_{h,i} \lambda_h), w_{h,i})_{\Omega_i}
	= \sum_i (f, w_{h,i})_{\Omega_i},
	 &\forall w_{h,i} \in W_{h,i}. \label{dd-3}
\end{alignat}
\end{subequations}
Here, we use a subscript $i$ on a variable to denote its restriction to $\Omega_i$.
Letting $\bm{u}_h := \bm{u}_h^0 + \myR_h \lambda_h$ and
$\bm{v}_h := \bm{v}_h^0 + \myR_h \mu_h$, \eqref{dd-formulation} can be equivalently
written as: Find $\bm{u}_h \in V_h$ and $p_h \in W_h$ such that
\begin{subequations} \label{eqs: discrete problem 1}
	\begin{align}
		(K^{-1} \bm{u}_h, \bm{v}_h)_\Omega
		- \sum_i (p_h, \nabla \cdot \bm{v}_h)_{\Omega_i}
		&= 0,
		& \forall \bm{v}_h &\in V_h,\\
		\sum_i (\nabla \cdot \bm{u}_h, w_h)_{\Omega_i}
		&= (f, w_h)_\Omega,
		& \forall w_h &\in W_h.
	\end{align}
\end{subequations}
Note that the flux-mortar mixed finite element method \eqref{eqs: discrete problem 1}
is a non-conforming discretization of the weak formulation \eqref{weak-model},
since $V_h \not\subset H(\div, \Omega)$. We further emphasize that the
discrete trial and test functions from $V_h$ are naturally decomposed
into internal and interface degrees of freedom using $\myR_h$. This
will be used in the reduction to an interface problem in
Section~\ref{sec:reduction_to_an_interface_problem}. 

We next focus on the two types of extension operators $\myR_h^\sharp$
and $\myR_h^\flat$.

\subsection{Projection to the space of weakly continuous functions}
\label{sub: disc projection_to_the_mortar_space}

Let us first consider the projection operator $\myR_h^\sharp$ that
satisfies \eqref{eq: projection to mortar}. In its construction, we
use the concept of weakly continuous functions, as introduced in
\cite{ACWY} in the pressure-mortar method. In particular, let the
space of weakly continuous fluxes $V_{h, c}$ and the associated trace
space $V_{h, c}^\Gamma$ be given by
\begin{subequations}
	\begin{align*}
		V_{h, c}
		&:= \left\{ \bm{v}_h \in \bigoplus_i V_{h, i} :\
		\sum_i (\bm{\nu}_i \cdot \bm{v}_{h, i}, \mu_h)_{\Gamma_i} = 0, \
		\forall \mu_h \in \Lambda_h \right\}, \\
		V_{h, c}^\Gamma
		&:= \left\{ \xi_h \in V_h^\Gamma :\
		\sum_i (\xi_{h, i}, \mu_h)_{\Gamma_i} = 0, \
		\forall \mu_h \in \Lambda_h \right\}.
	\end{align*}
\end{subequations}

Let $\mathcal{Q}_h^\sharp: \Lambda \to V_{h, c}^\Gamma$ denote the
$L^2$-projection to $V_{h, c}^\Gamma$ and let $\mathcal{Q}_{h,
  i}^\sharp: \Lambda \to V_{h, i}^\Gamma$ be its restriction to the
trace space $V_{h, i}^\Gamma$.

We construct an extension satisfying property \eqref{eq: projection to
  mortar} by introducing a two-step process. We first solve the
following auxiliary problem, obtained from \cite{ACWY}: Given $\lambda
\in \Lambda$, find $\psi_h^\sharp \in V_h^\Gamma$ and $\chi_h \in
\Lambda_h$ such that
\begin{subequations} \label{eq: to-projection problem}
\begin{align}
	\sum_i (\lambda_i - \psi_{h, i}^\sharp - \chi_h, \xi_{h, i})_{\Gamma_i} &= 0, &
	\forall \xi_h &\in V_h^\Gamma, \label{eq: eq1 of psi sharp}\\
	\sum_i (\psi_{h, i}^\sharp, \mu_h)_{\Gamma_i} &= 0, &
	\forall \mu_h &\in \Lambda_h. \label{eq: weak continuity of psi}
\end{align}
\end{subequations}

\begin{lemma} \label{lem: sharp projection well-posed}
	Problem \eqref{eq: to-projection problem} admits a unique solution under assumption \eqref{eq: flat mortar condition}.
\end{lemma}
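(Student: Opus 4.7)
The plan is to recognize \eqref{eq: to-projection problem} as a discrete saddle-point problem with primal unknown $\psi_h^\sharp \in V_h^\Gamma$ and Lagrange multiplier $\chi_h \in \Lambda_h$, and to apply Brezzi's theorem. Throughout, occurrences of $\chi_h$ or $\mu_h$ inside sums over $\Gamma_i$ are read with the orientation convention introduced earlier, i.e.\ $(\mu_h)_i := \mu_h$ on $\Gamma_{ij}$ for $i<j$ and $(\mu_h)_j := -\mu_h$. Under this reading the relevant forms are $a(\psi_h,\xi_h) := \sum_i (\psi_{h,i},\xi_{h,i})_{\Gamma_i}$ on $V_h^\Gamma \times V_h^\Gamma$ and $b(\xi_h,\mu_h) := \sum_i ((\mu_h)_i,\xi_{h,i})_{\Gamma_i}$ on $V_h^\Gamma \times \Lambda_h$, and the constraint \eqref{eq: weak continuity of psi} reads as the jump-type weak continuity that precisely defines $V_{h,c}^\Gamma$.

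Coercivity of $a$ on the constraint kernel is immediate and unconditional, since $a(\psi_h,\psi_h) = \sum_i \|\psi_{h,i}\|_{\Gamma_i}^2$ is already the square of the natural norm on $V_h^\Gamma$; the mortar assumption is not needed for this Brezzi condition.

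The heart of the argument is the inf-sup condition for $b$, which is where \eqref{eq: flat mortar condition} enters. Given $\mu_h \in \Lambda_h$, I would construct an explicit test function $\xi_h \in V_h^\Gamma$ by setting, on each $\Gamma_{ij}$ with $i<j$, $\xi_{h,i}|_{\Gamma_{ij}} := \mathcal{Q}_{h,i}^\flat \mu_h$ and $\xi_{h,j}|_{\Gamma_{ij}} := -\mathcal{Q}_{h,j}^\flat \mu_h$. Unpacking signs and using the defining property of the $L^2$-projections $\mathcal{Q}_{h,i}^\flat$, $\mathcal{Q}_{h,j}^\flat$ gives
\[
b(\xi_h,\mu_h) = \sum_{i<j} \bigl(\mu_h,\,\xi_{h,i} - \xi_{h,j}\bigr)_{\Gamma_{ij}} = \sum_{i<j} \bigl(\|\mathcal{Q}_{h,i}^\flat \mu_h\|_{\Gamma_{ij}}^2 + \|\mathcal{Q}_{h,j}^\flat \mu_h\|_{\Gamma_{ij}}^2\bigr).
\]
Squaring \eqref{eq: flat mortar condition} yields $b(\xi_h,\mu_h) \gtrsim \|\mu_h\|_\Lambda^2$, while $L^2$-boundedness of each $\mathcal{Q}_{h,i}^\flat$ gives $\|\xi_h\| \lesssim \|\mu_h\|_\Lambda$. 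Brezzi's theorem then provides existence and uniqueness.

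The step I expect to require the most care is the sign bookkeeping in the construction of $\xi_h$: with the wrong relative orientation between $\xi_{h,i}$ and $\xi_{h,j}$, the two projection contributions on $\Gamma_{ij}$ would cancel instead of adding, and the mortar assumption could no longer be invoked. Once the orientations are fixed consistently with the convention on $\mu_h$, the remainder is routine application of saddle-point theory.
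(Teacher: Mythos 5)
Your overall strategy is sound and genuinely different from the paper's. The paper exploits that \eqref{eq: to-projection problem} is a square finite-dimensional system, so only uniqueness is needed: setting $\lambda = 0$ and testing with $(\psi_h^\sharp, \chi_h)$, the cross terms cancel upon summation, giving $\psi_h^\sharp = 0$; then the first equation says $\mathcal{Q}_{h,i}^\flat \chi_h = 0$ for every $i$, and \eqref{eq: flat mortar condition} forces $\chi_h = 0$. Your Brezzi-theorem route proves the same thing with more machinery but also yields a quantitative stability bound, and the key ingredient is identical: the mortar condition \eqref{eq: flat mortar condition} is what controls the multiplier, entering your argument through the inf-sup constant and the paper's through the final implication $\mathcal{Q}_{h,i}^\flat\chi_h = 0 \Rightarrow \chi_h = 0$. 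Both proofs are correct uses of the same underlying mechanism.

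There is, however, one misreading you should fix: the orientation convention $\lambda_i := \lambda$, $\lambda_j := -\lambda$ applies only where the paper writes a subscript, and in \eqref{eq: to-projection problem} the multiplier $\chi_h$ and the test function $\mu_h$ carry no subscript — they are unsigned. This is forced by the surrounding text: \eqref{eq: weak continuity of psi} is precisely the statement $\psi_h^\sharp \in V_{h,c}^\Gamma$, whose definition uses $\sum_i(\xi_{h,i},\mu_h)_{\Gamma_i}$ with a single-valued $\mu_h$, and the identity $2(\chi_h,1)_{\Gamma_{ij}} = (\lambda_i - \psi_{h,i}^\sharp,1)_{\Gamma_{ij}} + (\lambda_j - \psi_{h,j}^\sharp,1)_{\Gamma_{ij}}$ in the proof of Lemma~\ref{lem psi sharp preserves mean} only holds if $\chi_h$ enters with the same sign from both sides. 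With your signed reading you are analyzing a different (though isomorphic, via flipping the sign of the $j$-trace on each $\Gamma_{ij}$) system whose constraint tests $\xi_{h,i}-\xi_{h,j}$ rather than $\xi_{h,i}+\xi_{h,j}$. The good news is that your inf-sup construction survives the correction and in fact simplifies: take $\xi_{h,i} := \mathcal{Q}_{h,i}^\flat\mu_h$ on \emph{every} side with no sign flip; then each contribution is $(\mathcal{Q}_{h,i}^\flat\mu_h,\mu_h)_{\Gamma_{ij}} = \|\mathcal{Q}_{h,i}^\flat\mu_h\|_{\Gamma_{ij}}^2 \ge 0$ by the projection property, so the cancellation you were guarding against cannot occur, and squaring \eqref{eq: flat mortar condition} gives the inf-sup bound exactly as you wrote. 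The coercivity and boundedness observations are unaffected.
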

\begin{proof}
	Since \eqref{eq: to-projection problem} corresponds to a square system of equations, it suffices to show uniqueness. Hence, we set $\lambda = 0$ and choose $\xi_{h, i} = \psi_{h, i}^\sharp$ and $\mu_h = \chi_h$. It follows after summation of the two equations that $\psi_{h, i}^\sharp = 0$. In turn, it follows from the first equation and assumption \eqref{eq: flat mortar condition} that $\chi_h = 0$.
\end{proof}

\begin{lemma} \label{lem psi sharp preserves mean}
	The solution $\psi_h^\sharp$ of \eqref{eq: to-projection problem} is the $L^2$-projection of $\lambda$ onto $V_{h, c}^\Gamma$:
	\begin{align}\label{psi-Qh-sharp}
		\psi_h^\sharp = \mathcal{Q}_h^\sharp \lambda.
	\end{align}
	Moreover, it satisfies $(\lambda_i - \psi_{h, i}^\sharp, 1)_{\Gamma_{ij}} = 0$
	for each $\Gamma_{ij}$.
\end{lemma}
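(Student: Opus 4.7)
My plan is to verify that $\psi_h^\sharp$ is characterized as the $L^2$-projection of $\lambda$ onto $V_{h,c}^\Gamma$ by checking its two defining properties directly, and then to deduce the mean-preserving identity from a single targeted test. Equation \eqref{eq: weak continuity of psi} is literally the membership condition $\psi_h^\sharp \in V_{h,c}^\Gamma$, so the only remaining property is the orthogonality of the residual. Rearranging \eqref{eq: eq1 of psi sharp} yields $\sum_i (\lambda_i - \psi_{h,i}^\sharp, \xi_{h,i})_{\Gamma_i} = \sum_i (\chi_h, \xi_{h,i})_{\Gamma_i}$ for all $\xi_h \in V_h^\Gamma$. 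When $\xi_h \in V_{h,c}^\Gamma \subset V_h^\Gamma$, the right-hand side vanishes because $\chi_h \in \Lambda_h$ and $V_{h,c}^\Gamma$ is defined precisely by this orthogonality to $\Lambda_h$. Combining the two properties gives $\psi_h^\sharp = \mathcal{Q}_h^\sharp \lambda$.

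For the second identity, I would apply the orthogonality just obtained to an anti-symmetric test function concentrated on a single interface: let $\xi_h \in V_h^\Gamma$ equal $1$ on $\Gamma_{ij}$ from the side of $\Omega_i$, equal $-1$ on $\Gamma_{ij}$ from the side of $\Omega_j$, and vanish on all other interface components. Since $\xi_{h,i} + \xi_{h,j} \equiv 0$ on $\Gamma_{ij}$, this $\xi_h$ lies in $V_{h,c}^\Gamma$ automatically (no assumption on $\Lambda_h$ is needed for this membership). Testing the orthogonality and accounting for $\lambda_i = \lambda$, $\lambda_j = -\lambda$ on $\Gamma_{ij}$ (with $i<j$) yields a relation of the form $2(\lambda, 1)_{\Gamma_{ij}} = (\psi_{h,i}^\sharp - \psi_{h,j}^\sharp, 1)_{\Gamma_{ij}}$. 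A companion identity follows from \eqref{eq: weak continuity of psi} evaluated at $\mu_h$ equal to $1$ on $\Gamma_{ij}$ (extended by zero), giving $(\psi_{h,i}^\sharp + \psi_{h,j}^\sharp, 1)_{\Gamma_{ij}} = 0$. Adding the two identities eliminates $\psi_{h,j}^\sharp$ and produces the desired equality $(\lambda_i - \psi_{h,i}^\sharp, 1)_{\Gamma_{ij}} = 0$.

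I do not anticipate a substantial obstacle: the proof is essentially an unpacking of the definition of $V_{h,c}^\Gamma$ and of the $L^2$-projection. The main source of potential confusion is the sign convention $\lambda_i = \pm \lambda$, which must be tracked carefully when pairing against the anti-symmetric test function. The argument also tacitly uses that the indicator function of $\Gamma_{ij}$ (extended by zero) belongs to $V_{h,i}^\Gamma$ for each adjacent $i$ and to $\Lambda_{h,ij}$; both are routine features of the mixed finite element and mortar spaces considered here.
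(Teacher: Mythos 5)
Your proof is correct and follows essentially the same route as the paper: the first part is identical, and for the mean-preservation you extract the same two identities (the tested form of \eqref{eq: eq1 of psi sharp} restricted to $\Gamma_{ij}$ and the weak continuity \eqref{eq: weak continuity of psi} with the indicator of $\Gamma_{ij}$), merely combined in a different order --- the paper first shows $(\chi_h,1)_{\Gamma_{ij}}=0$ via the symmetric test pair and then uses a one-sided indicator, whereas you use the anti-symmetric pair, which lies in $V_{h,c}^\Gamma$ and so eliminates $\chi_h$ from the outset. Both arguments rest on the same facts, namely that the indicator of $\Gamma_{ij}$ lies in $V_{h,i}^\Gamma\cap V_{h,j}^\Gamma$ and in $\Lambda_{h,ij}$, and your sign bookkeeping ($\lambda_i-\lambda_j=2\lambda$ on $\Gamma_{ij}$ for $i<j$) is right.
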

\begin{proof}
First, we note that the solution $\psi_h^\sharp \in V_{h, c}^\Gamma$
due to \eqref{eq: weak continuity of psi}. By choosing $\xi_h$ in
\eqref{eq: eq1 of psi sharp} from $V_{h, c}^\Gamma \subset
V_h^\Gamma$, we obtain
	\begin{align*}
		\sum_i (\lambda_i - \psi_{h, i}^\sharp, \xi_{h, i})_{\Gamma_i} &= 0, &
		\forall \xi_h &\in V_{h, c}^\Gamma.
	\end{align*}
Hence, $\psi_h^\sharp$ is the $L^2$-projection of $\lambda$ onto
$V_{h, c}^\Gamma$ which we denote by $\mathcal{Q}_h^\sharp \lambda$.

For the second result, we consider a given $\Gamma_{ij}$ and note that
$1 \in V_{h, i}^\Gamma \cap V_{h, j}^\Gamma$. Taking $\xi_{h,i} = \xi_{h,j} = 1$ on
$\Gamma_{ij}$ in
\eqref{eq: eq1 of psi sharp} and using that $1 \in \Lambda_{h,ij}$ and 
$\psi_h^\sharp \in V_{h,c}^\Gamma$, we derive
\begin{align*}
		 2(\chi_h, 1)_{\Gamma_{ij}}
		 &= (\lambda_i - \psi_{h, i}^\sharp, 1)_{\Gamma_{ij}}
		 + (\lambda_j - \psi_{h, j}^\sharp, 1)_{\Gamma_{ij}} \\
		 &= (\lambda_i + \lambda_j, 1)_{\Gamma_{ij}}
		 - (\psi_{h, i}^\sharp + \psi_{h, j}^\sharp, 1)_{\Gamma_{ij}}
		 = 0.
\end{align*}
Thus, taking $\xi_{h,i} = 1$ and $\xi_{h,j} = 0$ on $\Gamma_{ij}$
in \eqref{eq: eq1 of psi sharp} gives
$(\lambda_i - \psi_{h, i}^\sharp, 1)_{\Gamma_{ij}} = 0$.
\end{proof}

The obtained $\psi_{h, i}^\sharp$ is in the trace space $V_{h, i}^\Gamma$. Hence, the second step in the definition of $\myR_{h, i}^\sharp$ is to choose a bounded extension to the discrete space $V_{h, i}$ such that $\bm{\nu}_i \cdot \myR_{h, i}^\sharp \lambda = \psi_{h, i}^\sharp$ on $\Gamma_i$. For an explicit example of such an extension, we refer to Section~\ref{sub:discrete_extension}.

Let $V_h^\sharp := V_h^0 \oplus \myR_h^\sharp \Lambda_h$ be the discrete function space defined by this choice of extension operator.
Due to Lemma~\ref{lem psi sharp preserves mean}, we note that $V_h^\sharp \subseteq V_{h, c}$. However, the converse inclusion does not hold in general since the projection $\mathcal{Q}_h^\sharp$ is not necessarily surjective on $V_{h, c}^\Gamma$ when acting on $\Lambda_h$. As a direct consequence, the problem we set up in this space is closely related, but not equivalent, to the one introduced in \cite{ACWY}, Section~3. To be specific, we have used $\myR_h^\sharp$ to generate a strict subspace of $V_{h, c}$ whereas the problem in \cite{ACWY} is posed on $V_{h, c}$.

We make one additional assumption for this choice of extension
operator in analogy with assumption \eqref{eq: flat mortar condition},
namely that for all $\mu_h \in \Lambda_h$,
\begin{align} \label{eq: sharp mortar condition}
	\| \mu_h \|_{\Gamma_{ij}}
	\lesssim
	\| \mathcal{Q}_{h, i}^\sharp \mu_h \|_{\Gamma_{ij}}
	+ \| \mathcal{Q}_{h, j}^\sharp \mu_h \|_{\Gamma_{ij}}, \quad
	\forall \, \Gamma_{ij}.
\end{align}

\subsection{Projection to the trace spaces}
\label{sub: disc_projection_from_the_mortar_space}

An alternative choice of extension operators ($\flat$) aims to satisfy
\eqref{eq: projection from mortar}. In this case, we project from the
space $L^2(\Gamma_i)$ onto the trace space of $V_{h, i}$ for each
$i$. We follow a similar two-step process as in the previous
subsection. In the first step we solve the problem:
Given $\lambda \in \Lambda$, find $\psi_{h, i}^\flat \in
V_{h, i}^\Gamma$ such that
\begin{align} \label{eq: from-projection problem}
	(\lambda_i - \psi_{h, i}^\flat, \xi_{h, i})_{\Gamma_i} &= 0, &
	\forall \xi_{h, i} &\in V_{h, i}^\Gamma.
\end{align}

\begin{lemma} \label{lem psi flat preserves mean}
	The solution $\psi_{h, i}^\flat$ of \eqref{eq: from-projection problem} is the $L^2$-projection of $\lambda$ onto $V_{h, i}^\Gamma$:
	\begin{align}\label{psi-Qh-flat}
		\psi_{h, i}^\flat = \mathcal{Q}_{h, i}^\flat \lambda.
	\end{align}
	Moreover, it satisfies $(\lambda_i - \psi_{h, i}^\flat, 1)_{\Gamma_{ij}} = 0$
	for each $\Gamma_{ij}$.
\end{lemma}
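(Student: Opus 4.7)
The plan is to read both assertions directly off the defining variational equation \eqref{eq: from-projection problem}.

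For the first claim, I would observe that \eqref{eq: from-projection problem} is precisely the Galerkin orthogonality characterization of the $L^2$-projection of $\lambda_i$ onto the finite-dimensional subspace $V_{h,i}^\Gamma \subset L^2(\Gamma_i)$. Since this projection is uniquely defined, one immediately identifies $\psi_{h,i}^\flat$ with $\mathcal{Q}_{h,i}^\flat \lambda$. No further argument is needed; in particular, in contrast with Lemma~\ref{lem: sharp projection well-posed}, well-posedness of \eqref{eq: from-projection problem} is automatic because the problem is purely a local $L^2$-projection on each subdomain and does not require the mortar compatibility assumption \eqref{eq: flat mortar condition}.

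For the mean-preservation claim on a fixed interface $\Gamma_{ij}$, I would proceed by a localized test-function argument, in the same spirit as the proof of Lemma~\ref{lem psi sharp preserves mean}. The key observation is that the constant function $1$ belongs to $V_{h,i}^\Gamma$ when restricted to $\Gamma_{ij}$, since the normal trace space of any standard stable mixed pair (Raviart--Thomas, Brezzi--Douglas--Marini, etc.) contains constants on each face, and its degrees of freedom can be specified independently on the distinct interface pieces comprising $\Gamma_i$. Choosing $\xi_{h,i} \in V_{h,i}^\Gamma$ equal to $1$ on $\Gamma_{ij}$ and $0$ on the remaining pieces of $\Gamma_i$, and substituting into \eqref{eq: from-projection problem}, yields the desired identity $(\lambda_i - \psi_{h,i}^\flat, 1)_{\Gamma_{ij}} = 0$.

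The only delicate point I foresee is justifying this localized choice of $\xi_{h,i}$, which relies on an implicit structural assumption on $V_{h,i}$ that the paper has adopted from the outset. Beyond this, the flat case is strictly simpler than the sharp one: there is no coupling across subdomains, no auxiliary Lagrange multiplier $\chi_h$ to eliminate, and no compatibility condition to invoke; the lemma follows essentially by unpacking the definition of $\mathcal{Q}_{h,i}^\flat$.
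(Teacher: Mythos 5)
Your proposal is correct and follows essentially the same route as the paper: the first claim is the definition of the $L^2$-projection, and the second is obtained by testing \eqref{eq: from-projection problem} with the indicator function of $\Gamma_{ij}$, which the paper likewise asserts lies in $V_{h,i}^\Gamma$. Your extra remark about the localized test function being an implicit structural assumption on the trace space is a fair observation, but the paper treats it as given, so nothing more is needed.
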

\begin{proof}
	The first claim follows by definition, whereas the second follows from the fact that the indicator function of $\Gamma_{ij}$ is in the space $V_{h, i}^\Gamma$.
\end{proof}

The second step in the definition of $\myR_{h, i}^\flat$ is to choose
a bounded extension to the discrete space $V_{h, i}$ such that
$\bm{\nu}_i \cdot \myR_{h, i}^\flat \lambda = \psi_{h, i}^\flat$ on
$\Gamma_i$. An explicit example is given in
Section~\ref{sub:discrete_extension}. We refer to the resulting
function space as $V_h^\flat := V_h^0 \oplus \myR_h^\flat \Lambda_h$.

\begin{remark}
The extension operator $\myR_{h, i}^\flat$ does not explicitly use
the mortar condition \eqref{eq: flat mortar condition} in its
construction. However, as shown later in Section~\ref{sec:well-posed},
this condition remains necessary to ensure unique solvability of the
model problem posed in $V_h^\flat$.

The spaces $V_h^\sharp$ and $V_h^\flat$ are different in general, with
none contained in the other. This can be seen by the fact that both
spaces have the same, finite dimensionality but the extension
$\myR_h^\flat$ does not satisfy \eqref{eq: projection to mortar} in
general.

\end{remark}

\subsection{A discrete extension operator}
\label{sub:discrete_extension}

We next present the second step in the construction of the two
extension operators, which is similar for both cases. It is denoted by
$\myR_{h, i}^\sharp$ or $\myR_{h, i}^\flat$ depending on the
associated projection operator $\mathcal{Q}_{h}^\sharp$ or
$\mathcal{Q}_{h}^\flat$ used in the first step of the construction.
We refer to results concerning both extension operators by omitting
the superscript.

The discrete extension operator on each subdomain $\Omega_i$ will be defined
using a subdomain problem with Neumann data on $\Gamma_i$. 
For interior subdomains, $i \in I_{int}$, this results in Neumann boundary
conditions on the entire boundary $\partial \Omega_i$. 
To deal with possibly singular subdomain problems, we define the space
\begin{align} \label{eq: definition S_Hi darcy}
	S_{H, i} &:= \begin{cases}
		\mathbb{R}, & i \in I_{int} \\
		0, & i \notin I_{int}
	\end{cases}, &
	S_H &:= \bigoplus_i S_{H, i}.
\end{align}
The subscript $H$ is the characteristic subdomain size.

We construct a discrete extension operator
$\myR_{h, i}$ by solving the following auxiliary problem for given $\lambda
\in \Lambda$: Find $(\myR_{h, i} \lambda, p_{h, i}^\lambda, r_i)
\in V_{h, i} \times W_{h, i} \times S_{H, i}$ such that
\begin{subequations} \label{eq: R_h problem}
\begin{align}
	(K^{-1} \myR_{h, i} \lambda, \bm{v}_{h, i}^0)_{\Omega_i}
	- (\nabla \cdot \bm{v}_{h, i}^0, p_{h, i}^\lambda)_{\Omega_i}
	&= 0
	, && \forall \bm{v}_{h, i}^0 \in V_{h, i}^0, \label{Rh-eq1}
	\\
	(\nabla \cdot \myR_{h, i} \lambda, w_{h, i})_{\Omega_i}
	- (r_i, w_{h, i})_{\Omega_i}
	&= 0
	, && \forall w_{h, i} \in W_{h, i}, \label{Rh-eq2}
	\\
	(p_{h, i}^\lambda, s_i)_{\Omega_i}
	&= 0
	, && \forall s_i \in S_{H, i}, \label{Rh-eq3}
        \\
	\bm{\nu}_i \cdot \myR_{h, i} \lambda
	&= \psi_{h, i}, && \text{ on } \Gamma_i. \label{Rh-bc}
\end{align}
\end{subequations}
We note that \eqref{Rh-bc} is an essential boundary condition and that,
for subdomains adjacent to $\partial \Omega$, the boundary condition
$p_i^\lambda = 0$ on $\partial \Omega_i \setminus \Gamma_i$ is natural and
has been incorporated in \eqref{Rh-eq1}.
We emphasize that the definition of $\psi_{h, i}$ depends on the
choice of projection operator from the previous subsections. In
particular, for $\myR_{h, i} = \myR_{h, i}^\sharp$, we have $\psi_{h,
  i} := \psi_{h, i}^\sharp$ from \eqref{eq: to-projection problem} and
we set $\psi_{h, i} := \psi_{h, i}^\flat$ from \eqref{eq:
  from-projection problem} for $\myR_{h, i} = \myR_{h, i}^\flat$.

\begin{lemma} \label{lem: R_h is well-posed}
Problem \eqref{eq: R_h problem} admits a unique solution with
\begin{align}\label{Rh-stab}
		\nabla \cdot \myR_{h, i} \lambda &= \overline{\lambda}_i &
		&\text{and} &
		\| \myR_{h, i} \lambda \|_{\div, \Omega_i}
		&\lesssim
		\| \psi_{h, i} \|_{\Gamma_i}
		\lesssim
		\| \lambda \|_{\Gamma_i}.
	\end{align}
\end{lemma}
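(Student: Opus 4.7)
The plan is to reduce problem \eqref{eq: R_h problem} to a standard mixed Poisson discretization on $\Omega_i$ by lifting off the non-homogeneous essential boundary data. First I would construct a bounded discrete extension $\bm{v}_h^{\ast} \in V_{h,i}$ satisfying $\bm{\nu}_i \cdot \bm{v}_h^{\ast} = \psi_{h,i}$ on $\Gamma_i$ together with $\|\bm{v}_h^{\ast}\|_{\div,\Omega_i} \lesssim \|\psi_{h,i}\|_{\Gamma_i}$; for the standard $H(\div)$-conforming pairs satisfying \eqref{eq: div V = W}--\eqref{local-inf-sup} such liftings are available, e.g., through a Fortin-type argument or by solving an auxiliary subdomain Neumann problem. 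Writing $\tilde{\bm{u}}_h := \myR_{h,i}\lambda - \bm{v}_h^{\ast}$, the constraint \eqref{Rh-bc} places $\tilde{\bm{u}}_h$ in $V_{h,i}^0$, and \eqref{eq: R_h problem} becomes a square saddle-point problem for $(\tilde{\bm{u}}_h, p_{h,i}^\lambda, r_i) \in V_{h,i}^0 \times W_{h,i} \times S_{H,i}$ whose right-hand side depends linearly and continuously on $\bm{v}_h^{\ast}$.

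Because the reduced system is finite-dimensional and square, well-posedness reduces to injectivity. Setting $\psi_{h,i}=0$, so that $\myR_{h,i}\lambda \in V_{h,i}^0$ is itself a valid test function, I would take $\bm{v}_{h,i}^0 = \myR_{h,i}\lambda$ in \eqref{Rh-eq1}, $w_{h,i} = p_{h,i}^\lambda$ in \eqref{Rh-eq2}, and $s_i = r_i$ in \eqref{Rh-eq3}. Summing the three identities cancels the cross terms and leaves $(K^{-1}\myR_{h,i}\lambda, \myR_{h,i}\lambda)_{\Omega_i} = 0$, which by \eqref{K-spd} forces $\myR_{h,i}\lambda = 0$. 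The discrete inf-sup stability on $V_{h,i}^0 \times W_{h,i}$ (or, for interior subdomains, on $V_{h,i}^0 \times W_{h,i}/S_{H,i}$) then places $p_{h,i}^\lambda$ in $S_{H,i}$; equation \eqref{Rh-eq3} removes this component to yield $p_{h,i}^\lambda = 0$; finally, a test in \eqref{Rh-eq2} with a constant $w_{h,i}$ gives $r_i = 0$.

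The divergence identity is obtained by reading \eqref{Rh-eq2} as an equation in $W_{h,i}$: since $S_{H,i}\subset W_{h,i}$ and \eqref{eq: div V = W} holds, $\nabla\cdot\myR_{h,i}\lambda = r_i$ pointwise. Integrating over $\Omega_i$, applying the divergence theorem on $\partial\Omega_i$, and invoking the mean-preservation identity from Lemma~\ref{lem psi sharp preserves mean} (or Lemma~\ref{lem psi flat preserves mean}) identifies $r_i$ with the constant $\overline{\lambda}_i$. The stability estimate then follows by combining the classical a priori bound for the reduced saddle-point system, $\|\tilde{\bm{u}}_h\|_{\div,\Omega_i} \lesssim \|\bm{v}_h^{\ast}\|_{\div,\Omega_i}$, with the triangle inequality and the lifting bound $\|\bm{v}_h^{\ast}\|_{\div,\Omega_i} \lesssim \|\psi_{h,i}\|_{\Gamma_i}$; the closing estimate $\|\psi_{h,i}\|_{\Gamma_i} \lesssim \|\lambda\|_{\Gamma_i}$ is immediate from the $L^2$-boundedness of the projection operators $\mathcal{Q}_h^\sharp$ and $\mathcal{Q}_{h,i}^\flat$ identified in \eqref{psi-Qh-sharp} and \eqref{psi-Qh-flat}.

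The main obstacle I expect is technical rather than conceptual: producing the stable discrete lifting $\bm{v}_h^{\ast}$ compatible with the essential boundary condition \eqref{Rh-bc}, and, for interior subdomains, stating the inf-sup stability on $V_{h,i}^0 \times W_{h,i}/S_{H,i}$, where the Neumann-type compatibility of the divergence must be absorbed by the scalar Lagrange multiplier $r_i \in S_{H,i}$. Both are standard for the usual mixed pairs but should be recorded explicitly before invoking them.
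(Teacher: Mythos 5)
Your proposal is correct and follows essentially the same route as the paper: uniqueness of the square system via an energy argument, the divergence identity via testing with constants and the mean-preservation of $\psi_{h,i}$, and stability via a bounded discrete $H(\div)$-lifting of the boundary datum (the paper's $\myR_{h,i}^\star$, which it takes from \cite[Sec.~4.1.2]{quarteroni1999domain}, plays the role of your $\bm{v}_h^{\ast}$). The only presentational difference is that the paper derives the stability bound explicitly—first controlling $p_{h,i}^\lambda$ through the local inf-sup condition and then testing with $\myR_{h,i}\lambda-\myR_{h,i}^\star\psi_{h,i}$—rather than citing an abstract a priori bound for the three-field system, which is precisely the technical point you flag as needing to be recorded.
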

\begin{proof}
We first show that
$$
r_i = \overline{\lambda}_i := \begin{cases}
| \Omega_i |^{-1} (\lambda_i, 1)_{\Gamma_i}, &i\in I_{int}, \\
0, &i \notin I_{int}.
\end{cases}
$$
If $i \notin I_{int}$, this follows by the
definition of $S_H$ since we then have $r_i = \overline{\lambda}_i =
0$. If $i \in I_{int}$, we set $w_{h, i} = 1$ in \eqref{Rh-eq2}:
\begin{align*}
		(r_i - \overline{\lambda}_i, 1)_{\Omega_i} &=
		(\nabla \cdot \myR_{h, i} \lambda, 1)_{\Omega_i}
		- (\overline{\lambda}_i, 1)_{\Omega_i}
		=
		(\psi_{h, i} - \lambda_i, 1)_{\Gamma_i}
		= 0,
		& \forall i \in I_{int}.
\end{align*}
The final equality follows for the two variants due to Lemmas~\ref{lem
  psi sharp preserves mean} and \ref{lem psi flat preserves mean}. Now
\eqref{Rh-eq2} implies that $\nabla \cdot \myR_{h, i} \lambda =
\overline{\lambda}_i$.

Since this is a square, finite-dimensional linear system, uniqueness
implies existence. Thus, we set $\lambda = 0$ and note that $r_i =
\overline{\lambda}_i = 0$. In addition, $\psi_{h,i} = 0$,
thus $\myR_{h, i} \lambda \in V_{h, i}^0$. Setting test functions
$(\myR_{h, i} \lambda, p_{h, i}^\lambda)$ in the first two equations and
summing them gives $\myR_{h, i} \lambda = 0$.  Finally, we use
\eqref{eq: div V = W} to derive that $W_{h, i} = \nabla \cdot V_{h,
  i}^0 \oplus S_{H, i}$ which implies $p_{h, i}^\lambda = 0$, using
\eqref{Rh-eq1} and \eqref{Rh-eq3}.

We continue with the stability estimate by first obtaining a bound on
the auxiliary variable $p_{h, i}^\lambda$. Recall that the discrete
pair $V_{h, i} \times W_{h, i}$ is stable, see \eqref{local-inf-sup},
and note that $p_{h, i}^\lambda$ has zero mean for $i \in I_{int}$.
Thus, there exists $\bm{v}_{h, p,i}^0 \in V_{h, i}^0$ such that
\begin{align*}
  \nabla \cdot \bm{v}_{h, p,i}^0 = p_{h, i}^\lambda \ \text{ in } \Omega_i,
  \quad \| \bm{v}_{h, p,i}^0 \|_{\div, \Omega_i} \lesssim \| p_{h, i}^\lambda \|_{\Omega_i}.
\end{align*}
Using $\bm{v}_{h, p,i}^0$ as a test function in \eqref{Rh-eq1}, we derive
	\begin{align*}
		\| p_{h, i}^\lambda \|_{\Omega_i}^2 =
		(K^{-1} \myR_{h, i} \lambda, \bm{v}_{h, p,i}^0)_{\Omega_i}
		\le
		\| K^{-1}\myR_{h, i} \lambda \|_{\Omega_i}
		\| \bm{v}_{h, p,i}^0 \|_{\Omega_i}
		\lesssim
		\| \myR_{h, i} \lambda \|_{\Omega_i}
		\| p_{h, i}^\lambda \|_{\Omega_i},
	\end{align*}
implying
\begin{align} \label{eq: p^lambda bound}
		\| p_{h, i}^\lambda \|_{\Omega_i}
		\lesssim
		\| \myR_{h, i} \lambda \|_{\Omega_i}.
	\end{align}

Second, we note that $\nabla\cdot \myR_{h, i} \lambda = 0$ for all $i
\notin I_{int}$ since $\overline{\lambda}_i = 0$. For the remaining
indexes, i.e. $i \in I_{int}$, we derive:
\begin{subequations} \label{eq: subeqs R_h well-posed}
	\begin{align}
		\| \nabla \cdot \myR_{h, i} \lambda \|_{\Omega_i}
		&=
		\| \overline{\lambda}_i \|_{\Omega_i}
		=
		| \Omega_i |^{-\frac12} (\lambda, 1)_{\Gamma_i}
		=
		| \Omega_i |^{-\frac12} (\psi_{h, i}, 1)_{\Gamma_i}
		\lesssim
		\| \psi_{h, i} \|_{\Gamma_i}.
	\end{align}

Third, we introduce the discrete $H(\div, \Omega_i)$--extension
operator from \cite[Sec. 4.1.2]{quarteroni1999domain}, and denote it
by $\myR_{h, i}^\star: V_{h, i}^\Gamma \to V_{h, i}$. This extension
has the properties:
\begin{align*}
\bm{\nu}_i \cdot \myR_{h, i}^\star \psi_{h, i} = \psi_{h, i} \text{ on }\Gamma_i, \ \
\bm{\nu}_i \cdot \myR_{h, i}^\star \psi_{h, i} = 0
\text{ on }\partial \Omega_i \setminus \Gamma_i, \ \
\| \myR_{h, i}^\star \psi_{h, i} \|_{\div, \Omega_i} \lesssim \| \psi_{h, i} \|_{\Gamma_i}.
\end{align*}
The next step is to set the test functions $\bm{v}_{h, i}^0 = \myR_{h,
  i} \lambda - \myR_{h, i}^\star \psi_{h, i}$, $w_{h, i} = p_{h,
  i}^\lambda$ and $s_i = r_i$ in \eqref{eq: R_h problem}. After
summation of the equations, we have
\begin{align*}
(K^{-1} \myR_{h, i} \lambda, \myR_{h, i} \lambda - \myR_{h, i}^\star \psi_{h, i})_{\Omega_i}
+ (\nabla \cdot \myR_{h, i}^\star \psi_{h, i}, p_{h, i}^\lambda)_{\Omega_i} = 0.
\end{align*}
Using bound \eqref{eq: p^lambda bound} and the continuity bound for
$\| \myR_{h, i}^\star \psi_{h, i} \|_{\div, \Omega_i}$, we obtain
\begin{align*}
\| \myR_{h, i} \lambda \|_{\Omega_i}^2 \lesssim
(\| \myR_{h, i} \lambda \|_{\Omega_i} + \| p_{h, i}^\lambda \|_{\Omega_i})
\|\myR_{h, i}^\star \psi_{h, i} \|_{\div, \Omega_i}
\lesssim \| \myR_{h, i} \lambda \|_{\Omega_i} \| \psi_{h, i} \|_{\Gamma_i},
\end{align*}
which implies
\begin{align}
		\| \myR_{h, i} \lambda \|_{\Omega_i}
		&\lesssim
		\| \psi_{h, i} \|_{\Gamma_i}.
\end{align}

Finally, recall that $\psi_{h, i}^\sharp = \mathcal{Q}_{h, i}^\sharp
\lambda$ and $\psi_{h, i}^\flat = \mathcal{Q}_{h, i}^\flat \lambda$,
i.e. both variants are generated using an $L^2$-projection. This
provides the bound:
\begin{align}
\| \psi_{h, i} \|_{\Gamma_i} \lesssim
\| \lambda \|_{\Gamma_i}.
\end{align}
\end{subequations}
Collecting \eqref{eq: subeqs R_h well-posed} proves the stability estimate.
\end{proof}

\section{Well posedness}
\label{sec:well-posed}

In this section, we establish existence, uniqueness, and stability of
the solution to the discrete problem \eqref{eqs: discrete problem 1}.

Let the bilinear forms $a$ and $b$ be defined as
\begin{align} \label{eqs: definition a and b}
	a(\bm{u}_h, \bm{v}_h) := (K^{-1} \bm{u}_h, \bm{v}_h)_\Omega, \quad 
	b(\bm{u}_h, w_h) := \sum_i (\nabla \cdot \bm{u}_{h, i}, w_{h, i})_{\Omega_i}.
\end{align}
Problem \eqref{eqs: discrete problem 1} can then be reformulated as:
Find $\bm{u}_h \in V_h$ and $p_h \in W_h$ such that
\begin{subequations}  \label{eq: discrete problem}
	\begin{align}
		a(\bm{u}_h, \bm{v}_h)
		- b(\bm{v}_h, p_h)
		&= 0,
		& \forall \bm{v}_h &\in V_h, \label{eqs: discrete problem eq1} \\
		b(\bm{u}_h, w_h)
		&= (f, w_h)_\Omega,
		& \forall w_h &\in W_h.
	\end{align}
\end{subequations}

In the next lemma we establish several properties of the bilinear forms
that will be used in the well posedness proof.

\begin{lemma} \label{lem: Brezzi conditions}
The bilinear forms $a(\cdot, \cdot)$ and $b(\cdot, \cdot)$ satisfy the
following bounds:
\begin{subequations} \label{ineqs: Brezzi conditions}
\begin{align}
	\forall & \bm{u}_h, \bm{v}_h \in V_h:
	&  a(\bm{u}_h, \bm{v}_h) &\lesssim \| \bm{u}_h \|_V \| \bm{v}_h \|_V.
	\label{ineq: a_cont}\\
	\forall & \bm{v}_h \in V_h \text{ and } w_h \in W_h:
	&  b(\bm{v}_h, w_h) &\lesssim \| \bm{v}_h \|_V \| w_h \|_W.
	\label{ineq: b_cont}\\
	\forall & \bm{v}_h \in V_h \text{ with } b(\bm{v}_h, w_h) = 0 \ \forall w_h \in W_h:
	&  a(\bm{v}_h, \bm{v}_h) &\gtrsim \| \bm{v}_h \|_V^2.
	\label{ineq: a_coercive}\\
	\forall & w_h \in W_h, \ \exists 0 \ne \bm{v}_h \in V_h \text{ such that}:
	&  b(\bm{v}_h, w_h) &\gtrsim \| \bm{v}_h \|_V \| w_h \|_W.
	\label{ineq: b_infsup}
\end{align}
\end{subequations}
\end{lemma}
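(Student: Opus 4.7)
Bounds (\ref{ineq: a_cont})--(\ref{ineq: a_coercive}) are routine and I would dispatch them first. For continuity of $a$, Cauchy--Schwarz together with \eqref{K-spd} gives $a(\bu_h,\bv_h) \le k_{min}^{-1}\|\bu_h\|_\Omega\|\bv_h\|_\Omega$; then $\|\bv_h\|_\Omega^2 = \sum_i \|\bv_{h,i}\|_{\Omega_i}^2 \le \|\bv_h\|_V^2$ because $\|\cdot\|_V$ is defined as a sum of subdomain norms rather than an $\ell^2$-sum. For continuity of $b$, subdomain-wise Cauchy--Schwarz yields $|b(\bv_h,w_h)| \le \|\nabla\cdot\bv_h\|_\Omega \|w_h\|_\Omega \le \|\bv_h\|_V \|w_h\|_W$. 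For coercivity on the kernel, the condition $b(\bv_h,w_h)=0$ for all $w_h \in W_h$ combined with $\nabla\cdot V_{h,i}=W_{h,i}$ forces $\nabla\cdot\bv_{h,i}=0$ on every $\Omega_i$, whereupon \eqref{K-spd} and Cauchy--Schwarz on the finite sum over subdomains give $a(\bv_h,\bv_h) \gtrsim \|\bv_h\|_\Omega^2 \gtrsim n_\Omega^{-1}\|\bv_h\|_V^2$.

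The inf-sup estimate (\ref{ineq: b_infsup}) is the substantive part. Given $w_h \in W_h$, the plan is to construct $\bv_h = \bv_h^0 + \myR_h\mu_h \in V_h$ with $\nabla\cdot\bv_{h,i} = w_{h,i}$ on every $\Omega_i$, so that $b(\bv_h,w_h) = \|w_h\|_W^2$ automatically. The mortar piece $\myR_h\mu_h$ is responsible for supplying the correct pressure mean on each interior subdomain, while the internal correction $\bv_h^0 \in V_h^0$ absorbs the mean-zero residual on every subdomain.

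To produce $\mu_h$, I would invoke the continuous inf-sup for mixed Poisson with $p = 0$ on $\partial\Omega$ to obtain $\bv \in H(\div,\Omega)$ satisfying $\nabla\cdot\bv = w_h$, $\|\bv\|_{H(\div,\Omega)} \lesssim \|w_h\|_\Omega$, with enough regularity that $\bnu\cdot\bv|_\Gamma \in L^2(\Gamma)$ and $\|\bnu\cdot\bv\|_\Gamma \lesssim \|w_h\|_\Omega$ (e.g.\ take $\bv = -\nabla\phi$ for $\phi$ the solution of the Poisson problem, together with standard regularity). Define $\mu_h$ as the $L^2$-projection of $\bnu\cdot\bv$ onto $\Lambda_h$; since $1 \in \Lambda_{h,ij}$, this projection preserves means on each $\Gamma_{ij}$, and hence on each $\Gamma_i$. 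Combined with the divergence theorem $(w_h,1)_{\Omega_i} = (\bnu\cdot\bv,1)_{\Gamma_i}$ for $i \in I_{int}$ (using $\partial\Omega_i \subset \Gamma$), this gives $(\mu_h,1)_{\Gamma_i} = (w_h,1)_{\Omega_i}$. Lemma~\ref{lem: R_h is well-posed} then yields $\nabla\cdot\myR_{h,i}\mu_h = |\Omega_i|^{-1}(w_h,1)_{\Omega_i}$ for $i \in I_{int}$ and $0$ otherwise, together with $\|\myR_h\mu_h\|_V \lesssim \|\mu_h\|_\Gamma \lesssim \|w_h\|_W$.

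Setting $\hat w_{h,i} := w_{h,i} - \nabla\cdot\myR_{h,i}\mu_h$, we have $\hat w_{h,i} \in W_{h,i}$ with zero mean for $i \in I_{int}$ and no mean constraint otherwise. A standard local inf-sup on $V_{h,i}^0 \times W_{h,i}$ (restricted to mean-zero pressures when $i \in I_{int}$), which follows from \eqref{eq: div V = W}, \eqref{local-inf-sup}, and Fortin-type arguments for mixed Poisson on $\Omega_i$ under mixed Dirichlet--Neumann boundary data, delivers $\bv_{h,i}^0 \in V_{h,i}^0$ with $\nabla\cdot\bv_{h,i}^0 = \hat w_{h,i}$ and $\|\bv_{h,i}^0\|_{\div,\Omega_i} \lesssim \|\hat w_{h,i}\|_{\Omega_i}$. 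Summing over subdomains produces $\|\bv_h\|_V \lesssim \|w_h\|_W$, completing the proof. The chief obstacle is the mean matching on interior subdomains: it is precisely the design of $\myR_h$ in Section~\ref{sub:discrete_extension}, together with the mean-preservation property of the $L^2$-projection onto $\Lambda_h$, that reduces the inf-sup to this familiar local statement.
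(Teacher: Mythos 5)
Your argument follows the same architecture as the paper's proof: continuity and kernel-coercivity exactly as written there, and for the inf-sup condition the same four-step construction (global auxiliary field with divergence $w_h$, an interface function $\mu_h$ matching means on each $\Gamma_{ij}$, the extension $\myR_h\mu_h$ with $\nabla\cdot\myR_{h,i}\mu_h=\overline{\mu}_{h,i}$ from Lemma~\ref{lem: R_h is well-posed}, and a mean-zero local correction in $V_{h,i}^0$ via \eqref{local-inf-sup}). The one substantive deviation is the auxiliary field: the paper solves a divergence problem $\nabla\cdot\bv^w=w_h$ with prescribed compatible data $\bm{g}\in H^{\frac12}(\partial\Omega)$ and obtains $\bv^w\in (H^1(\Omega))^n$ directly (citing Galdi), so the required bound $\|\bnu_i\cdot\bv^w\|_{\Gamma_i}\lesssim\|\bv^w\|_{1,\Omega_i}$ on every subdomain boundary is an immediate consequence of the trace theorem. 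You instead take $\bv=-\nabla\phi$ with $\phi$ the Poisson solution and appeal to ``standard regularity'' for $\bnu\cdot\bv|_\Gamma\in L^2(\Gamma)$ with $\|\bnu\cdot\bv\|_\Gamma\lesssim\|w_h\|_\Omega$; on a general polygonal domain $\phi\notin H^2(\Omega)$ (re-entrant corners), and to control the normal trace of $\nabla\phi$ in $L^2$ on the interior skeleton you would need $H^{\frac12+\epsilon}$ regularity of $\nabla\phi$ or a nontangential-maximal-function estimate --- true on polygons, but requiring justification that the divergence-lifting route avoids entirely. Your other variation, taking $\mu_h$ to be the $L^2$-projection of $\bnu\cdot\bv$ onto $\Lambda_h$ rather than the paper's interface-wise mean value, is immaterial: both are $L^2$-stable and both preserve $(\mu_h,1)_{\Gamma_{ij}}=(\bnu\cdot\bv,1)_{\Gamma_{ij}}$ because $1\in\Lambda_{h,ij}$, which is all the mean-matching step on interior subdomains uses. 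I would also note that your write-up is slightly more careful than the paper's on one small point: the identity $\|\bv_h\|_\Omega=\|\bv_h\|_V$ used in the coercivity step holds only up to a constant depending on $n_\Omega$, since $\|\cdot\|_V$ is a sum rather than an $\ell^2$-sum of subdomain norms, and you make that explicit.
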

\begin{proof}
Bounds \eqref{ineq: a_cont} and \eqref{ineq: b_cont} describe the
continuity of the bilinear forms. These follow directly from the
Cauchy-Schwarz inequality and the boundedness of $K$,
c.f. \eqref{K-spd}. Bound \eqref{ineq: a_coercive} concerns coercivity. Recall that
$\nabla \cdot V_{h, i} \subseteq W_{h, i}$ from \eqref{eq: div V = W}
and that $V_h \subseteq \bigoplus_i V_{h, i}$. In turn, the assumption
$b(\bm{v}_h, w_h) = 0$ for all $w_h \in W_h$ implies that $\nabla
\cdot \bm{v}_{h, i} = 0$ for all $i$. Using this in combination with
\eqref{K-spd} gives
\begin{align*}
		a(\bm{v}_h, \bm{v}_h)
		= \| K^{-1/2} \bm{v}_h \|_{\Omega}^2
		\gtrsim \| \bm{v}_h \|_{\Omega}^2
		= \| \bm{v}_h \|_V^2.
\end{align*}
Finally, inequality \eqref{ineq: b_infsup} describes the discrete inf-sup
condition. Let $w_h \in W_h$ be given.
Consider a global divergence problem on $\Omega$:
\begin{align}\label{global-div}
  \nabla \cdot \bm{v}^w &= w_h \text{ in } \Omega, \quad \bm{v}^w = \bm{g} \text{ on }
  \partial\Omega,
\end{align}
where $\bm{g} \in H^{\frac12}(\partial\Omega)$ is such that
$(\bm{\nu}\cdot\bm{g},1)_{\partial\Omega} = (w_h,1)_{\Omega}$ and
$\|\bm{g}\|_{\frac12,\partial\Omega} \lesssim \|w_h\|_{\Omega}$. This problem
has a solution $\bm{v}^w \in (H^1(\Omega))^n$ satisfying \cite{Galdi}:
\begin{align*}
  \| \bm{v}^w \|_{1, \Omega} \lesssim \| w_h \|_\Omega + \|\bm{g}\|_{\frac12,\partial\Omega}
  \lesssim \|w_h\|_{\Omega}.
\end{align*}
Let $\mu_h \in \Lambda_h$ be defined on each interface $\Gamma_{ij}$ as the mean
value of $\bm{\nu} \cdot \bm{v}^w$. We have
$$
\|\mu_h\|_{\Gamma} \lesssim \sum_{i \in I_\Omega} \| \mu_{h, i} \|_{\Gamma_i}
\lesssim \sum_{i \in I_\Omega}\| \bm{\nu}_i \cdot \bm{v}^w \|_{\Gamma_i}
\lesssim \sum_{i \in I_\Omega}\|\bm{v}^w \|_{1,\Omega_i}
\lesssim \| w_h \|_{\Omega}.
$$
Moreover, on each interior subdomain $\Omega_i$,
i.e., with $\Gamma_i = \partial\Omega_i$, we have that
\begin{align*} 
		(\mu_{h, i}, 1)_{\partial\Omega_i}
		= (\bm{\nu}_i \cdot \bm{v}^w, 1)_{\partial\Omega_i}
		= (\nabla \cdot \bm{v}^w, 1)_{\Omega_i}
		= (w_{h, i}, 1)_{\Omega_i}.
\end{align*}
Consider the extension $\myR_{h, i} \mu_h$ and note that on each interior subdomain
$\Omega_i$, 
\begin{align*}
(w_{h, i} - \nabla \cdot \myR_{h, i} \mu_h, 1)_{\Omega_i} & =
  (w_{h, i}, 1)_{\Omega_i} - (\bm{\nu}_i \cdot \myR_{h, i} \mu_h, 1)_{\partial \Omega_i} \\
  & = (w_{h, i}, 1)_{\Omega_i} - (\mu_{h, i}, 1)_{\partial\Omega_i} = 0.
\end{align*}
Then, the local discrete inf-sup condition \eqref{local-inf-sup} implies that
in each $\Omega_i$ there exists $\bm{v}_{h, i}^0 \in V_{h, i}^0$ such that
\begin{align*}
\nabla \cdot \bm{v}_{h, i}^0 = w_{h, i} - \nabla \cdot \myR_{h, i} \mu_h \quad
\text{ in }\Omega_i,
\end{align*}
and, using Lemma~\ref{lem: R_h is well-posed},
\begin{align*}
\sum_{i \in I_\Omega}\| \bm{v}_{h, i}^0 \|_{\div, \Omega_i}
& \lesssim \sum_{i \in I_\Omega} \| w_{h, i} - \nabla \cdot \myR_{h, i} \mu_h \|_{\Omega_i}
\le \sum_{i \in I_\Omega}
\left(\| w_{h, i} \|_{\Omega_i} + \| \nabla \cdot \myR_{h, i} \mu_h \|_{\Omega_i} \right)\\
&\lesssim \sum_{i \in I_\Omega} \left(\| w_{h, i} \|_{\Omega_i} + \| \mu_{h, i} \|_{\Gamma_i}\right)
\lesssim \| w_{h} \|_{\Omega}.
\end{align*}
The final step is to define $\bm{v}_h^0 \in V_h^0$ such that
$\bm{v}_h^0|_{\Omega_i}:= \bm{v}_{h, i}^0$,
set $\bm{v}_h := \bm{v}_h^0 + \myR_h \mu_h \in V_h$, and note that
\begin{subequations} \label{eq: result inf-sup}
\begin{align}
& b(\bm{v}_h, w_h)
= (\nabla \cdot (\bm{v}_h^0 + \myR_h \mu_h), w_h)_\Omega = \| w_h \|_W^2, \\
& \| \bm{v}_h \|_V \le \| \bm{v}_h^0 \|_V + \| \myR_h \mu_h \|_V
\lesssim \| \bm{v}_h^0 \|_V + \| \mu_h \|_{\Gamma} \lesssim \| w_h \|_W.
\end{align}
\end{subequations}
Combining equations \eqref{eq: result inf-sup} yields \eqref{ineq: b_infsup}.
\end{proof}

\begin{corollary} \label{cor: inf-sup S_H}
	The following inf-sup condition holds for the spaces $\Lambda_h \times S_H$:
	\begin{align*}
	\forall & s_H \in S_H, \ \exists 0 \ne \mu_h \in \Lambda_h \text{ such that }
	b(\myR_h \mu_h, s_H) \gtrsim \| \mu_h \|_\Lambda \| s_H \|_W.
			\label{ineq: b_infsup on S_H}
	\end{align*}
\end{corollary}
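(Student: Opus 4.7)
The plan is to exploit the divergence identity for $\myR_h \mu_h$ proved in Lemma~\ref{lem: R_h is well-posed} and then reduce the claim to a finite-dimensional discrete Poincaré inequality on the subdomain graph. Since $s_{H,i}$ is constant on each interior subdomain and vanishes on non-interior ones, and since $\nabla\cdot\myR_{h,i}\mu_h = \overline{\mu}_{h,i}$ equals $|\Omega_i|^{-1}(\mu_{h,i},1)_{\Gamma_i}$ for $i \in I_{int}$ and zero otherwise, we can compute
\begin{align*}
b(\myR_h \mu_h, s_H)
= \sum_{i \in I_{int}} s_{H,i}\,(\overline{\mu}_{h,i},1)_{\Omega_i}
= \sum_{i \in I_{int}} s_{H,i}\,(\mu_{h,i},1)_{\Gamma_i}.
\end{align*}
Applying the orientation convention $\mu_{h,i} = \mu_h$, $\mu_{h,j} = -\mu_h$ on $\Gamma_{ij}$ for $i<j$, together with the fact that $s_{H,k}=0$ for $k \notin I_{int}$, this rearranges to
\begin{align*}
b(\myR_h \mu_h, s_H) = \sum_{i<j} (s_{H,i} - s_{H,j})\,(\mu_h, 1)_{\Gamma_{ij}}.
\end{align*}

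The natural choice is then to take $\mu_h$ piecewise constant on $\Gamma$, with value $c_{ij} := s_{H,i} - s_{H,j}$ on each interface $\Gamma_{ij}$. Such constants lie in $\Lambda_{h,ij}$ since $\Lambda_h$ consists of piecewise polynomials on $\Gamma_{h,ij}$. With this choice, $b(\myR_h \mu_h, s_H) = \sum_{i<j} c_{ij}^2 |\Gamma_{ij}| = \|\mu_h\|_\Lambda^2$, so the desired inf-sup inequality reduces to a discrete Poincaré-type bound
\begin{align*}
\sum_{i<j} c_{ij}^2\, |\Gamma_{ij}| \;\gtrsim\; \sum_{i \in I_{int}} s_{H,i}^2\, |\Omega_i| \;=\; \|s_H\|_W^2.
\end{align*}

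The only real work is this last inequality, and the main obstacle is to argue that the hidden constant is independent of $h$. This follows because the subdomain decomposition is fixed and all the quantities live in a finite-dimensional space. Concretely, the linear map $\Phi\colon S_H \to \mathbb{R}^{\#\{(i,j):i<j,\,\Gamma_{ij}\neq\emptyset\}}$ sending $s_H \mapsto (c_{ij})$ has trivial kernel: if all $c_{ij}$ vanish, then $s_H$ is constant on each connected component of the subdomain adjacency graph, and connectedness of $\Omega$ ensures every interior subdomain is linked through a chain of shared interfaces to some non-interior subdomain where $s_{H,k}=0$, forcing $s_H \equiv 0$. Norm equivalence on the finite-dimensional space $S_H$ then yields the desired Poincaré estimate with a constant depending on the geometry of the decomposition but not on $h$. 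In particular, if $s_H \ne 0$ then $\mu_h \ne 0$, completing the proof.
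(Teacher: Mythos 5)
Your proof is correct, but it takes a genuinely different route from the paper. The paper simply re-runs its inf-sup construction from Lemma~\ref{lem: Brezzi conditions} with $w_h := s_H$: one solves the auxiliary global divergence problem for $\bm{v}^w$, takes $\mu_h$ to be the interface mean values of $\bm{\nu}\cdot\bm{v}^w$, and observes that because $s_H$ is piecewise constant the interior correction $\bm{v}_h^0$ vanishes, leaving $b(\myR_h\mu_h,s_H)=\|s_H\|_W^2$ with $\|\mu_h\|_\Lambda \lesssim \|s_H\|_W$. You instead build $\mu_h$ explicitly and purely algebraically: using $\nabla\cdot\myR_{h,i}\mu_h = \overline{\mu}_{h,i}$ from Lemma~\ref{lem: R_h is well-posed} and the orientation convention, you correctly rewrite $b(\myR_h\mu_h,s_H)$ as $\sum_{i<j}(s_{H,i}-s_{H,j})(\mu_h,1)_{\Gamma_{ij}}$, choose $\mu_h$ equal to the interface jumps of $s_H$ (admissible since constants lie in $\Lambda_{h,ij}$, as the paper itself uses in Lemma~\ref{lem psi sharp preserves mean}), and reduce the claim to a discrete Poincar\'e inequality on the subdomain adjacency graph, proved by injectivity plus norm equivalence on the fixed finite-dimensional space $S_H$. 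Note the two arguments close in mirror-image fashion: the paper gets $\|\mu_h\|_\Lambda\lesssim\|s_H\|_W$ and $b=\|s_H\|_W^2$, while you get $\|\mu_h\|_\Lambda\gtrsim\|s_H\|_W$ and $b=\|\mu_h\|_\Lambda^2$; either combination yields the stated bound. Your version is more elementary and self-contained (no $H^1$ divergence lifting, no trace inequalities) and makes the coarse-problem structure transparent as a graph-Laplacian-type estimate, at the cost of making explicit two mild geometric assumptions that the paper leaves implicit: connectivity of the subdomain adjacency graph and the existence of at least one non-interior subdomain, both of which hold here since $\Omega$ is a bounded connected polygonal domain. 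In both arguments the hidden constant depends on the fixed decomposition but not on $h$.
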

\begin{proof}
  Setting $w_h := s_H \in S_H \subseteq W_h$ in the above proof of \eqref{ineq: b_infsup}
leads to a pair $(\bm{v}_h^0, \mu_h)$ with $\bm{v}_h^0 = 0$, $\| \mu_h \|_\Lambda \lesssim \| s_H \|_W$, and $b(\myR_h \mu_h, s_H) = \| s_H \|_W^2$.
\end{proof}

We are now ready to establish the well posedness of the flux-mortar MFE method.

\begin{theorem} \label{thm: well-posedness discrete}
Problem~\eqref{eq: discrete problem} with $\myR_h = \myR_h^\flat$
admits a unique solution $(\bm{u}_h, p_h) \in V_h^\flat \times W_h$. If
\eqref{eq: flat mortar condition} holds, then \eqref{eq: discrete
  problem} with $\myR_h = \myR_h^\sharp$ has a unique solution
$(\bm{u}_h, p_h) \in V_h^\sharp \times W_h$. In both cases the solution
satisfies
\begin{align} \label{eq: stability estimate}
		\| \bm{u}_h \|_V + \| p_h \|_W \lesssim \| f \|_{\Omega}.
\end{align}
Moreover, if \eqref{eq: flat mortar condition} holds for $\myR_h = \myR_h^\flat$ and
if \eqref{eq: sharp mortar condition} holds for $\myR_h = \myR_h^\sharp$, then
the mortar solution $\lambda_h \in \Lambda_h$ is unique and satisfies
\begin{align}
\| \lambda_h \|_\Lambda \lesssim h^{-1/2} \| \bm{u}_h \|_V.
\end{align}
\end{theorem}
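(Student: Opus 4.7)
The plan is to reduce the existence/uniqueness of $(\bm{u}_h, p_h)$ to the classical Brezzi saddle-point theorem and then treat the mortar variable separately. First I would note that $V_h^\flat$ is unconditionally well-defined (the $L^2$-projection $\mathcal{Q}_{h,i}^\flat$ and the discrete extension from Lemma~\ref{lem: R_h is well-posed} always exist), whereas $V_h^\sharp$ requires \eqref{eq: flat mortar condition} for the auxiliary problem \eqref{eq: to-projection problem} to be solvable via Lemma~\ref{lem: sharp projection well-posed}. Once $V_h$ is in place, the four bounds collected in Lemma~\ref{lem: Brezzi conditions} are precisely the hypotheses of the abstract Brezzi theorem, so applying it to the right-hand side $(0, f)$ yields existence, uniqueness, and the bound $\|\bm{u}_h\|_V + \|p_h\|_W \lesssim \|f\|_\Omega$ in both variants.

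For the uniqueness of $\lambda_h$, the task is to show that $\myR_h$ is injective on $\Lambda_h$ modulo $V_h^0$. Suppose $\bm{v}_h^0 + \myR_h \mu_h = 0$ with $\bm{v}_h^0 \in V_h^0$ and $\mu_h \in \Lambda_h$; taking the normal trace on each $\Gamma_i$ kills $\bm{v}_h^0$ and yields $\bm{\nu}_i \cdot \myR_{h,i} \mu_h|_{\Gamma_i} = 0$. By construction (Lemmas~\ref{lem psi sharp preserves mean} and \ref{lem psi flat preserves mean}), this normal trace equals $\mathcal{Q}_{h,i}^\flat \mu_h$ in the $\flat$ variant and $\mathcal{Q}_{h,i}^\sharp \mu_h$ in the $\sharp$ variant, so \eqref{eq: flat mortar condition} respectively \eqref{eq: sharp mortar condition} forces $\mu_h = 0$ on every $\Gamma_{ij}$.

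For the bound on $\lambda_h$ I would use the same identity $\bm{\nu}_i \cdot \bm{u}_{h,i}|_{\Gamma_i} = \bm{\nu}_i \cdot \myR_{h,i} \lambda_h|_{\Gamma_i} = \mathcal{Q}_{h,i} \lambda_h$ together with the appropriate mortar condition. For each interface,
\begin{equation*}
\|\lambda_h\|_{\Gamma_{ij}}
\lesssim
\|\mathcal{Q}_{h,i} \lambda_h\|_{\Gamma_{ij}} + \|\mathcal{Q}_{h,j} \lambda_h\|_{\Gamma_{ij}}
=
\|\bm{\nu}_i \cdot \bm{u}_{h,i}\|_{\Gamma_{ij}} + \|\bm{\nu}_j \cdot \bm{u}_{h,j}\|_{\Gamma_{ij}},
\end{equation*}
and a standard discrete trace inequality for piecewise polynomials on shape-regular meshes bounds each term by $h^{-1/2}\|\bm{u}_{h,i}\|_{\Omega_i}$. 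Squaring, summing over all interfaces, and using $\|\bm{u}_{h,i}\|_{\Omega_i} \le \|\bm{u}_h\|_V$ completes the estimate.

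The main subtlety I expect is bookkeeping which mortar condition is invoked where: \eqref{eq: flat mortar condition} is used twice (once to construct $\myR_h^\sharp$ and once for injectivity of $\myR_h^\flat$), whereas \eqref{eq: sharp mortar condition} enters only in the injectivity argument for $\myR_h^\sharp$, and the saddle-point part of the argument in the $\flat$ case in fact needs no mortar condition at all. Keeping this logical structure straight seems more delicate than any individual estimate in the proof.
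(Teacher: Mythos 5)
Your proposal is correct and follows essentially the same route as the paper: the four bounds of Lemma~\ref{lem: Brezzi conditions} feed into the standard Brezzi saddle-point theory for existence, uniqueness, and \eqref{eq: stability estimate}, and the mortar bound follows from the identity $\bm{\nu}_i\cdot\bm{u}_{h,i}=\mathcal{Q}_{h,i}\lambda_h$, the relevant mortar condition, and a discrete trace inequality. Your explicit injectivity argument for $\lambda_h$ and the bookkeeping of which mortar condition is needed where (none for the $\flat$ saddle-point part, \eqref{eq: flat mortar condition} to construct $V_h^\sharp$, and the respective conditions only for the mortar bound) match the paper's logic, which the paper leaves more implicit.
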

\begin{proof}
Continuity of the right-hand side of \eqref{eqs: discrete problem eq1} follows
from the Cauchy-Schwarz inequality. Together with the four
inequalities from Lemma~\ref{lem: Brezzi conditions}, we have
sufficient conditions to invoke the standard saddle point theory
\cite{boffi2013mixed} and obtain \eqref{eq: stability estimate}.

It remains to show the bound on $\lambda_h$ and therewith its uniqueness. 
For that, we use \eqref{eq: flat mortar condition} if $\myR_h = \myR_h^\flat$ and \eqref{eq: sharp mortar condition} if $\myR_h = \myR_h^\sharp$, combined with a discrete trace inequality:
	\begin{align*}
		\| \lambda_h \|_\Lambda =
		\| \lambda_h \|_\Gamma \lesssim
		\sum_i \| \mathcal{Q}_{h, i} \lambda_h \|_{\Gamma_i} \lesssim
		\sum_i h^{-1/2} \| \bm{u}_{h, i} \|_{\Omega_i} \lesssim
		h^{-1/2} \| \bm{u}_h \|_{V}.
	\end{align*}
\end{proof}

\section{A priori error analysis}
\label{sec:a_priori_analysis}

In this section, we present the error analysis of the discrete problem
\eqref{eqs: discrete problem 1}. Section~\ref{sub:interpolation_operators}
introduces the interpolation operators that form an important
tool in deriving the a priori error estimates in
Section~\ref{sub:error_estimates}.

\subsection{Interpolation operators}
\label{sub:interpolation_operators}

One of the main tools in deriving the error estimates is the
construction of an appropriate interpolant associated with the
discrete space $V_h$. The building blocks in our construction are the
canonical interpolation operators associated with the subdomain finite
element spaces $V_{h, i}$, namely $\Pi_i^V: V_i \cap
(H^\epsilon(\Omega_i))^n \to V_{h, i}$ with $\epsilon > 0$, with
the properties
\begin{align} 
(\nabla \cdot (\bm{v}_{i} - \Pi_i^V \bm{v}_{i}), w_{h, i})_{\Omega_i} &= 0, &
  \forall w_{h, i} &\in W_{h, i}, \label{eq: commutativity}\\
(\bnu_i\cdot(\bm{v}_{i} - \Pi_i^V \bm{v}_{i}), w_{h, i})_{\partial \Omega_i} &= 0, &
  \forall w_{h, i} &\in W_{h, i}. \label{eq: Pi-trace}
\end{align}
In addition, let $\Pi_i^W: L^2(\Omega_i) \to W_{h, i}$ and
$\Pi_{ij}^\Lambda: L^2(\Gamma_{ij}) \to \Lambda_{h, ij}$ denote the
$L^2$-projection operators onto $W_{h, i}$ and $\Lambda_{h, ij}$,
respectively. Together with the projection $\mathcal{Q}_{h, i}^\flat$
onto $V_{h, i}^\Gamma$ introduced earlier, we recall the
approximation properties \cite{boffi2013mixed}:
\begin{subequations} \label{eqs: approx props}
	\begin{align}
		\| \bm{v} - \Pi_i^V \bm{v} \|_{\Omega_i}
		&\lesssim h^{r_v} \| \bm{v} \|_{r_v, \Omega_i},
		& 0 &< r_v \le k_v + 1, \label{eq: approx prop v_i}\\
		\| \nabla \cdot (\bm{v} - \Pi_i^V \bm{v}) \|_{\Omega_i}
		&\lesssim h^{r_w} \| \nabla \cdot \bm{v} \|_{r_w, \Omega_i},
		& 0 &\le r_w \le k_w + 1,
		\label{eq: approx prop div v}\\
		\| w - \Pi_i^W w \|_{\Omega_i}
		&\lesssim h^{r_w} \| w \|_{r_w, \Omega_i},
		& 0 &\le r_w \le k_w + 1,
		\label{eq: approx prop w}\\
		\| \mu - \Pi_{ij}^\Lambda \mu \|_{\Gamma_{ij}}
		&\lesssim h_\Gamma^{r_\Lambda} \| \mu \|_{r_\Lambda, \Gamma_{ij}},
		& 0 &\le r_\Lambda \le k_\Lambda + 1,
		\label{eq: approx prop lambda}\\
		\| \mu - \mathcal{Q}_{h, i}^\flat \mu \|_{\Gamma_{ij}}
		& \lesssim
		h^{r_v} \| \mu \|_{r_v, \Gamma_{ij}}, &
		0 & \le r_v \le k_v + 1.
		\label{eq: approx prop Q}
	\end{align}
\end{subequations}
The constants $k_v$, $k_w$, and $k_\Lambda$ represent the polynomial order of the spaces $V_h$, $W_h$, and $\Lambda_h$, respectively, and $i, j \in I_\Omega$.
To exemplify, we present two choices of stable mixed finite element
pairs. For the pair of Raviart-Thomas of order $k_v$ and discontinuous
Lagrange elements of order $k_w$, we have $k_v = k_w$. On the other
hand, choosing the Brezzi-Douglas-Marini elements of order $k_v$ with
discontinuous Lagrange elements of polynomial order $k_w$, we obtain a
stable pair if $k_v = k_w + 1$. For more examples of stable finite
element pairs, we refer the reader to \cite{boffi2013mixed}

Let $\Pi^W: W \to W_h$ and $\Pi^\Lambda: \Lambda \to \Lambda_h$
be defined as the $L^2$-projections
$\Pi^W := \bigoplus_i \Pi_i^W$ and $\Pi^\Lambda:= \bigoplus_{i<j}\Pi^\Lambda_{ij}$.
The approximation properties of these
operators follow directly from \eqref{eqs: approx props}.

Next, we introduce the composite interpolant
$\Pi^V: \overline V \to V_h$, where
$\overline V = \{\bm{v} \in V: \bm{v}|_{\Omega_i} \in (H^\epsilon(\Omega_i))^n \text{ and }
  (\bm{\nu} \cdot
\bm{u})|_\Gamma \in \Lambda \}$.
Given $\bm{u} \in \overline V$ with normal trace $\lambda := (\bm{\nu} \cdot
\bm{u})|_\Gamma \in \Lambda$, we define $\Pi^V \bm{u} \in V_h$ as
\begin{subequations}\label{Pi-defn}
\begin{align}
  & \Pi_\flat^V \bm{u} := \myR_h^\flat \Pi^\Lambda\lambda
  + \bigoplus_i \Pi_i^V (\bm{u}_i - \myR_{h,i}^\flat \lambda)
= \myR_h^\flat (\Pi^\Lambda\lambda - \lambda) + \bigoplus_i \Pi_i^V \bm{u}_i, \label{Pi-flat}\\
  & \Pi_\sharp^V \bm{u} := \myR_h^\sharp \Pi^\Lambda\lambda
+ \bigoplus_i \Pi_i^V (\bm{u}_i - \myR_{h,i}^\flat \lambda)
= \Pi_\flat^V \bm{u} + \myR_h^\sharp \Pi^\Lambda\lambda - \myR_h^\flat \Pi^\Lambda\lambda. \label{Pi-sharp}
\end{align}
\end{subequations}
We note that, due to \eqref{psi-Qh-flat} and \eqref{Rh-bc},
$\bnu_i\cdot\myR_{h,i}^\flat \lambda = \mathcal{Q}_{h,i}^\flat\lambda$, which, combined
with \eqref{eq: Pi-trace}, implies
$ \bnu_i\cdot\Pi_i^V (\bm{u}_i - \myR_{h,i}^\flat \lambda) =
\mathcal{Q}_{h,i}^\flat\lambda - \mathcal{Q}_{h,i}^\flat\lambda = 0,
$
so \eqref{Pi-defn} gives $\Pi_\flat^V\bm{u} \in V_h^\flat$ and
$\Pi_\sharp^V\bm{u} \in V_h^\sharp$. In the following, the use of $\Pi^V$ indicates
that the result is valid for both choices. We emphasize that the
definitions of $\Pi^V\bm{u}$ and $\bm{u}_h$, combined with
\eqref{psi-Qh-sharp}, \eqref{psi-Qh-flat}, and \eqref{Rh-bc}, imply
\begin{equation}\label{trace-Pi-R-Q}
  \bnu_i\cdot \Pi^V \bm{u} = \bnu_i\cdot\myR_{h,i} \Pi^\Lambda\lambda = \mathcal{Q}_{h,i}\Pi^\Lambda\lambda, \quad \
  \bnu_i\cdot \bm{u}_h = \bnu_i\cdot\myR_{h,i} \lambda_h = \mathcal{Q}_{h,i}\lambda_h.
  \end{equation}

\begin{lemma} \label{lem: B-compatible}
The interpolation operator $\Pi^V$ has the property
\begin{align}\label{B-compatible}
b(\bm{u} - \Pi^V \bm{u}, w_h) = 0, \quad \forall \, w_h \in W_h.
\end{align}
\end{lemma}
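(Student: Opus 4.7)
The plan is to reduce the claim to two well-known facts: the commuting property of the canonical mixed interpolant, and the divergence formula for $\myR_{h,i}$ proved in Lemma~\ref{lem: R_h is well-posed}. Since \eqref{Pi-defn} expresses $\Pi^V\bm{u}$ in a form involving both the extension $\myR_h$ and the local canonical interpolants, I would first write the error $\bm{u}-\Pi^V\bm{u}$ as a sum of pieces that each annihilate $W_h$ in the sense of $b(\cdot,w_h)$.

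Concretely, I would set $\bm{w}_i := \bm{u}_i - \myR_{h,i}^\flat\lambda$, so that on each $\Omega_i$
\[
\bm{u}_i - (\Pi^V\bm{u})_i = (\bm{w}_i - \Pi_i^V\bm{w}_i) + \bigl(\myR_{h,i}^\flat\lambda - \myR_{h,i}\Pi^\Lambda\lambda\bigr),
\]
valid for both the $\sharp$ and $\flat$ variants. The first parenthesis is handled immediately by \eqref{eq: commutativity}, which gives $(\nabla\cdot(\bm{w}_i-\Pi_i^V\bm{w}_i),w_{h,i})_{\Omega_i}=0$. For the second parenthesis, Lemma~\ref{lem: R_h is well-posed} yields
\[
\nabla\cdot\bigl(\myR_{h,i}^\flat\lambda - \myR_{h,i}\Pi^\Lambda\lambda\bigr) = \overline{\lambda}_i - \overline{\Pi^\Lambda\lambda}_i,
\]
which vanishes identically for $i\notin I_{int}$ by the definition of $S_{H,i}$.

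For $i\in I_{int}$ the remaining task is to show $\overline{\lambda}_i = \overline{\Pi^\Lambda\lambda}_i$, i.e.\ $(\lambda-\Pi^\Lambda\lambda,1)_{\Gamma_i}=0$. Splitting this over the interfaces, $(\lambda-\Pi^\Lambda\lambda,1)_{\Gamma_i} = \sum_j \pm(\lambda-\Pi_{ij}^\Lambda\lambda,1)_{\Gamma_{ij}}$, and each term is zero because $1\in\Lambda_{h,ij}$ (the interface space contains at least piecewise constants) and $\Pi_{ij}^\Lambda$ is the $L^2$-projection onto $\Lambda_{h,ij}$. Hence the divergence of the second piece vanishes pointwise and contributes nothing to $b(\cdot,w_h)$. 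Summing over $i$ gives the claim. There is no real obstacle here; the only subtlety is noting that the divergence of $\myR_{h,i}$ depends on $\lambda$ only through its interface mean, so the choice between $\sharp$ and $\flat$ is irrelevant for this identity.
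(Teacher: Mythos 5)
Your proof is correct and follows essentially the same route as the paper's: reduce to the commuting property \eqref{eq: commutativity} of $\Pi_i^V$ and the divergence identity $\nabla\cdot\myR_{h,i}\mu=\overline{\mu}_i$ from \eqref{Rh-stab}, then observe that the interface means of $\lambda$ and $\Pi^\Lambda\lambda$ coincide. The only (harmless) differences are that you organize the decomposition so as to treat the $\sharp$ and $\flat$ variants uniformly, and you spell out the step $(\lambda-\Pi^\Lambda\lambda,1)_{\Gamma_{ij}}=0$ via $1\in\Lambda_{h,ij}$, which the paper asserts implicitly.
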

\begin{proof}
In the case of $\Pi_\flat^V$, we first note that, due to \eqref{Rh-stab},
$\nabla\cdot\myR_{h,i}^\flat (\Pi^\Lambda\lambda - \lambda)
= \overline{\Pi^\Lambda\lambda_i} - \overline\lambda_i = 0$. Then
the statement of the lemma follows from
\eqref{eq: commutativity}. In the case of $\Pi_\sharp^V$, due to
\eqref{Rh-stab},
$\nabla\cdot(\myR_{h,i}^\sharp \Pi^\Lambda\lambda - \myR_{h,i}^\flat \Pi^\Lambda\lambda) = 0$,
and the result follows.
\end{proof}

We proceed with the approximation properties of the interpolants $\Pi_\flat^V$
  and $\Pi_\sharp^V$.
\begin{lemma} \label{lem: approximation prop Pi}
Assuming that $\bu$ is smooth enough and that \eqref{eq: flat mortar condition} holds in the case $\myR_h = \myR_h^\sharp$, then
\begin{subequations} \label{eq: approx prop v}
\begin{align}
\| \bm{u} - \Pi_\flat^V \bm{u} \|_V
&\lesssim h^{r_v} \sum_i \| \bm{u} \|_{r_v, \Omega_i}
+ h^{r_w} \sum_i \| \nabla \cdot \bm{u} \|_{r_w, \Omega_i}
+ h_\Gamma^{r_\Lambda} \sum_{i < j} \| \lambda \|_{r_\Lambda, \Gamma_{ij}}, \label{approx-Pi-flat} \\
\| \bm{u} - \Pi_\sharp^V \bm{u} \|_V
&\lesssim
h^{r_v} \sum_i \| \bm{u} \|_{r_v, \Omega_i}
+ h^{r_w} \sum_i \| \nabla \cdot \bm{u} \|_{r_w, \Omega_i}
+ h_\Gamma^{r_\Lambda} \sum_{i < j} \| \lambda \|_{r_\Lambda, \Gamma_{ij}} \label{approx-Pi-sharp} \\
& \qquad\qquad
+ h^{\tilde r_v} \sum_{i < j} \| \lambda \|_{\tilde r_v, \Gamma_{ij}}, \nonumber
\end{align}
\end{subequations}
for $0 < r_v \le k_v + 1$, $0 \le r_w \le k_w + 1$, $0 \le r_\Lambda \le k_\Lambda+1$, 
and $0 \le \tilde r_v \le k_v + 1$.
\end{lemma}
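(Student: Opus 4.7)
The plan is to split the error into a bulk contribution controlled by the canonical mixed finite element interpolation and an interface contribution controlled through the stability estimate of $\myR_h$ in Lemma~\ref{lem: R_h is well-posed}. For $\Pi_\flat^V$ I would rewrite \eqref{Pi-flat} as
\[
\bm{u} - \Pi_\flat^V \bm{u} \;=\; \bigoplus_i(\bm{u}_i - \Pi_i^V \bm{u}_i) \;+\; \myR_h^\flat(\lambda - \Pi^\Lambda \lambda),
\]
so the first summand delivers the $h^{r_v}\|\bm{u}\|_{r_v}$ and $h^{r_w}\|\nabla\cdot\bm{u}\|_{r_w}$ contributions directly from \eqref{eq: approx prop v_i}--\eqref{eq: approx prop div v}. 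For the second, the key observation is that since $\Lambda_{h,ij}$ contains constants, $\Pi^\Lambda$ preserves the mean of $\lambda$ on every interface, hence $\overline{(\lambda-\Pi^\Lambda\lambda)_i}=0$. Lemma~\ref{lem: R_h is well-posed} then yields both $\nabla\cdot\myR_{h,i}^\flat(\lambda-\Pi^\Lambda\lambda)=0$ and $\|\myR_{h,i}^\flat(\lambda-\Pi^\Lambda\lambda)\|_{\div,\Omega_i}\lesssim\|\lambda-\Pi^\Lambda\lambda\|_{\Gamma_i}$, and \eqref{eq: approx prop lambda} supplies the $h_\Gamma^{r_\Lambda}$ term of \eqref{approx-Pi-flat}.

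For $\Pi_\sharp^V$ I would use the decomposition
\[
\bm{u} - \Pi_\sharp^V \bm{u} \;=\; (\bm{u} - \Pi_\flat^V\bm{u}) \;-\; (\myR_h^\sharp - \myR_h^\flat)\Pi^\Lambda\lambda
\]
from \eqref{Pi-sharp}, so only the correction term is new. By Lemmas~\ref{lem psi sharp preserves mean} and \ref{lem psi flat preserves mean}, both extensions applied to $\Pi^\Lambda\lambda$ have the same divergence $\overline{(\Pi^\Lambda\lambda)_i}$; their difference is therefore divergence-free and solves the homogeneous analogue of \eqref{eq: R_h problem} with boundary data $\psi_{h,i}^\sharp-\psi_{h,i}^\flat$. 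The energy argument that proves \eqref{Rh-stab} (using the auxiliary extension $\myR_{h,i}^\star$) then yields $\|(\myR_{h,i}^\sharp-\myR_{h,i}^\flat)\Pi^\Lambda\lambda\|_{\div,\Omega_i}\lesssim\|\psi_{h,i}^\sharp-\psi_{h,i}^\flat\|_{\Gamma_i}$.

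The hard step, and the only one that genuinely requires the mortar condition \eqref{eq: flat mortar condition}, is controlling $\|\psi_{h,i}^\sharp-\psi_{h,i}^\flat\|_{\Gamma_i}$. My plan is to first derive, by testing \eqref{eq: eq1 of psi sharp} (applied with data $\Pi^\Lambda\lambda$) with a $\xi_h\in V_h^\Gamma$ supported on a single $\Gamma_{ij}$, the interface identity $\psi_{h,i}^\sharp=\mathcal{Q}_{h,i}^\flat(\Pi^\Lambda\lambda_i-\chi_h)$, which combined with $\psi_{h,i}^\flat=\mathcal{Q}_{h,i}^\flat\Pi^\Lambda\lambda_i$ gives $\psi_{h,i}^\sharp-\psi_{h,i}^\flat=-\mathcal{Q}_{h,i}^\flat\chi_h$. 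Next I would test \eqref{eq: eq1 of psi sharp} with the admissible $\xi_h=\psi_h^\sharp-\psi_h^\flat\in V_h^\Gamma$, subtract the $L^2$-orthogonality defining $\psi_{h,i}^\flat$, apply \eqref{eq: weak continuity of psi} with $\mu_h=\chi_h$, and invoke the orientation cancellation $\sum_i(\chi_h,\Pi^\Lambda\lambda_i)_{\Gamma_i}=0$ (opposite signs on each $\Gamma_{ij}$) to arrive at
\[
\sum_i\|\psi_{h,i}^\sharp-\psi_{h,i}^\flat\|_{\Gamma_i}^2 \;=\; \sum_i(\chi_h,\psi_{h,i}^\flat-\Pi^\Lambda\lambda_i)_{\Gamma_i}.
\]
Cauchy--Schwarz together with \eqref{eq: flat mortar condition} applied to $\chi_h$, followed by the observation $\mathcal{Q}_{h,\cdot}^\flat\chi_h=\psi_{h,\cdot}^\flat-\psi_{h,\cdot}^\sharp$, should then absorb one power of $\|\psi^\sharp-\psi^\flat\|$ into the left-hand side and leave
\[
\|\psi_h^\sharp-\psi_h^\flat\|_\Gamma \;\lesssim\; \Bigl(\sum_i \|\Pi^\Lambda\lambda-\mathcal{Q}_{h,i}^\flat\Pi^\Lambda\lambda\|_{\Gamma_i}^2\Bigr)^{1/2}.
\]
The best-approximation property of $\mathcal{Q}_{h,i}^\flat$ with trial $\mathcal{Q}_{h,i}^\flat\lambda$ gives $\|\Pi^\Lambda\lambda-\mathcal{Q}_{h,i}^\flat\Pi^\Lambda\lambda\|_{\Gamma_{ij}}\le\|\Pi^\Lambda\lambda-\lambda\|_{\Gamma_{ij}}+\|\lambda-\mathcal{Q}_{h,i}^\flat\lambda\|_{\Gamma_{ij}}$, and \eqref{eq: approx prop lambda} together with \eqref{eq: approx prop Q} then produce precisely the additional $h^{\tilde r_v}\|\lambda\|_{\tilde r_v}$ contribution of \eqref{approx-Pi-sharp}.
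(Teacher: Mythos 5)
Your proposal is correct, and its skeleton coincides with the paper's: the same decompositions \eqref{Pi-flat} and \eqref{Pi-sharp}, the bulk terms handled by \eqref{eq: approx prop v_i}--\eqref{eq: approx prop div v}, the interface term by the stability \eqref{Rh-stab} of the extension, and the identification of $(\myR_h^\sharp-\myR_h^\flat)\Pi^\Lambda\lambda$ as an extension of the trace datum $\psi_h^\sharp-\psi_h^\flat = \mathcal{Q}_h^\sharp\Pi^\Lambda\lambda - \mathcal{Q}_h^\flat\Pi^\Lambda\lambda$. Where you diverge is the final, and only genuinely delicate, step. The paper disposes of it in two lines: a triangle inequality reduces $\|\mathcal{Q}_{h,i}^\sharp\Pi^\Lambda\lambda-\mathcal{Q}_{h,i}^\flat\Pi^\Lambda\lambda\|_{\Gamma_i}$ to $2\|\Pi^\Lambda\lambda-\lambda\|_{\Gamma_i}+\|\mathcal{Q}_{h,i}^\sharp\lambda-\mathcal{Q}_{h,i}^\flat\lambda\|_{\Gamma_i}$, and the last term is covered by citing Lemma~\ref{lem: similarity projections}, i.e. \cite[Lemma~3.2]{ACWY}. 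You instead prove the comparison of the two projections from scratch, applied directly to $\Pi^\Lambda\lambda$: the local identity $\psi_{h,i}^\sharp-\psi_{h,i}^\flat=-\mathcal{Q}_{h,i}^\flat\chi_h$ extracted from \eqref{eq: eq1 of psi sharp}, the energy identity $\sum_i\|\psi_{h,i}^\sharp-\psi_{h,i}^\flat\|_{\Gamma_i}^2=\sum_i(\chi_h,\psi_{h,i}^\flat-\Pi^\Lambda\lambda_i)_{\Gamma_i}$ obtained via \eqref{eq: weak continuity of psi} and the orientation cancellation, and the absorption through \eqref{eq: flat mortar condition} — I checked each of these and they hold. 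What your route buys is a self-contained proof that does not lean on the external reference, and it in effect reconstructs the content of the cited ACWY lemma; what it costs is length, since the paper's triangle-inequality detour lets the $\Pi^\Lambda$-dependence be stripped off before invoking the known projection-comparison estimate. Both routes land on the same extra $h^{\tilde r_v}\sum_{i<j}\|\lambda\|_{\tilde r_v,\Gamma_{ij}}$ term via \eqref{eq: approx prop Q}, plus an $h_\Gamma^{r_\Lambda}$ contribution already present in the bound.
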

\begin{proof}
Using \eqref{Pi-flat}, bound \eqref{approx-Pi-flat} for $\Pi_\flat^V$
follows from \eqref{Rh-stab} and the approximation bounds \eqref{eq:
  approx prop v_i}, \eqref{eq: approx prop div v}, and \eqref{eq: approx
  prop lambda}. For $\Pi_\sharp^V$, using \eqref{Pi-sharp}, we need to bound $\|\myR_{h,
  i}^\sharp \Pi^\Lambda\lambda - \myR_{h, i}^\flat \Pi^\Lambda\lambda\|_{\div,
  \Omega_i}$. Since this is the extension that solves \eqref{eq: R_h
  problem} with boundary data
$\mathcal{Q}_{h, i}^\sharp \Pi^\Lambda\lambda - \mathcal{Q}_{h, i}^\flat \Pi^\Lambda\lambda$,
we have $\myR_{h, i}^\sharp \Pi^\Lambda\lambda - \myR_{h, i}^\flat
\Pi^\Lambda\lambda = \myR_{h, i}^\flat(\mathcal{Q}_{h, i}^\sharp \Pi^\Lambda\lambda - \mathcal{Q}_{h, i}^\flat \Pi^\Lambda\lambda)$.
We use this observation in combination with \eqref{Rh-stab} to obtain
the bound
\begin{align*}
& \| \myR_{h, i}^\sharp \Pi^\Lambda\lambda - \myR_{h, i}^\flat \Pi^\Lambda\lambda \|_{\div, \Omega_i}
\lesssim
\| \mathcal{Q}_{h, i}^\sharp \Pi^\Lambda\lambda - \mathcal{Q}_{h, i}^\flat \Pi^\Lambda\lambda \|_{\Gamma_i} \\
& \qquad \le \|\mathcal{Q}_{h, i}^\sharp \Pi^\Lambda\lambda - \mathcal{Q}_{h, i}^\sharp \lambda\|_{\Gamma_i}
+ \|\mathcal{Q}_{h, i}^\sharp \lambda - \mathcal{Q}_{h, i}^\flat \lambda\|_{\Gamma_i}
+ \|\mathcal{Q}_{h, i}^\flat \Pi^\Lambda\lambda - \mathcal{Q}_{h, i}^\flat \lambda\|_{\Gamma_i} \\
& \qquad \le 2\|\Pi^\Lambda\lambda - \lambda\|_{\Gamma_i} +
\|\mathcal{Q}_{h, i}^\sharp \lambda - \mathcal{Q}_{h, i}^\flat \lambda\|_{\Gamma_i}.
\end{align*}
The proof of \eqref{approx-Pi-sharp} is completed by using \eqref{eq: approx
  prop lambda} and 
Lemma~\ref{lem: similarity projections}, presented below.
\end{proof}

\begin{lemma} \cite[Lemma~3.2]{ACWY} \label{lem: similarity projections}
  If \eqref{eq: flat mortar condition} holds, then 
\begin{align}\label{eq:sim-proj}
\sum_{i < j} \| \mathcal{Q}_{h, i}^\sharp \lambda - \mathcal{Q}_{h, i}^\flat \lambda \|_{\Gamma_{ij}}
&\lesssim
h^{\tilde r_v} \sum_{i < j} \| \lambda \|_{\tilde r_v, \Gamma_{ij}}, &
0 &\le \tilde r_v \le k_v + 1.
\end{align}
\end{lemma}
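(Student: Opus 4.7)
The plan is to exploit the best-approximation property of $\mathcal{Q}_h^\sharp$ as the orthogonal $L^2(\Gamma)$-projection onto $V_{h,c}^\Gamma$, combined with the flat mortar condition \eqref{eq: flat mortar condition}, to construct a weakly-continuous test element of $V_{h,c}^\Gamma$ that approximates $\lambda$ with the optimal order $h^{\tilde r_v}$. This follows the blueprint of \cite{ACWY}.

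First, I would reduce to a global $L^2(\Gamma)$ estimate via the triangle inequality. Inserting $\lambda_i$ and summing over interfaces,
\begin{align*}
\sum_{i<j}\|\mathcal{Q}_{h,i}^\sharp\lambda - \mathcal{Q}_{h,i}^\flat\lambda\|_{\Gamma_{ij}}
\le \sum_{i<j}\|\lambda_i - \mathcal{Q}_{h,i}^\sharp\lambda\|_{\Gamma_{ij}}
+ \sum_{i<j}\|\lambda_i - \mathcal{Q}_{h,i}^\flat\lambda\|_{\Gamma_{ij}}.
\end{align*}
The second sum is directly controlled by \eqref{eq: approx prop Q}. The first sum can be bounded, up to a uniform constant depending only on the maximum number of interfaces meeting at a subdomain, by $\|\lambda - \mathcal{Q}_h^\sharp \lambda\|_\Gamma$, so the remaining task is a global estimate on this quantity.

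Second, since $\mathcal{Q}_h^\sharp$ is the orthogonal $L^2(\Gamma)$-projection onto $V_{h,c}^\Gamma$, best approximation gives $\|\lambda - \mathcal{Q}_h^\sharp \lambda\|_\Gamma \le \|\lambda - \eta\|_\Gamma$ for any $\eta \in V_{h,c}^\Gamma$. I would construct such an $\eta$ by correcting the local flat projection. Setting $\tilde\eta \in V_h^\Gamma$ with $\tilde\eta_i := \mathcal{Q}_{h,i}^\flat \lambda_i$ on $\Gamma_i$, the bound \eqref{eq: approx prop Q} yields the desired approximation estimate for $\tilde\eta$; however, $\tilde\eta$ generically fails the weak continuity requirement, the failure on each $\Gamma_{ij}$ being $(\tilde\eta_i + \tilde\eta_j,\mu_h)_{\Gamma_{ij}}$. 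Since $\lambda_i + \lambda_j = 0$ on $\Gamma_{ij}$, this defect equals $\bigl((\mathcal{Q}_{h,i}^\flat - I)\lambda_i + (\mathcal{Q}_{h,j}^\flat - I)\lambda_j, \mu_h\bigr)_{\Gamma_{ij}}$, which is itself of optimal approximation order. The flat mortar condition \eqref{eq: flat mortar condition} then supplies the inf-sup structure needed to absorb this defect: it furnishes a correction $\delta \in V_h^\Gamma$ such that $\eta := \tilde\eta - \delta \in V_{h,c}^\Gamma$ and $\|\delta\|_\Gamma \lesssim \|\lambda - \tilde\eta\|_\Gamma$. Combining the pieces closes the estimate.

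The main obstacle will be justifying the correction $\delta$ in the third step, where \eqref{eq: flat mortar condition} has to be used as a discrete inf-sup condition that boundedly inverts the mortar-to-traces coupling; this parallels how the same condition is invoked in the proof of Theorem~\ref{thm: well-posedness discrete} and in Lemma~\ref{lem: sharp projection well-posed}. Once this bounded right-inverse is in place, the rest is bookkeeping between the triangle inequality, the best-approximation property, and the approximation estimate \eqref{eq: approx prop Q}.
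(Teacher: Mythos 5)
The paper does not actually prove this lemma---it is imported verbatim from \cite[Lemma~3.2]{ACWY}---so there is no in-paper argument to compare against. Your proof is correct and self-contained: the triangle inequality through $\lambda$, the direct use of \eqref{eq: approx prop Q} for the $\flat$ term, and, for the $\sharp$ term, best approximation in $V_{h,c}^\Gamma$ combined with an inf-sup correction of the componentwise flat projection, where \eqref{eq: flat mortar condition} supplies the inf-sup constant exactly as you describe (taking $\xi_{h,i}=\mathcal{Q}_{h,i}^\flat\mu_h$ as the test trace shows the jump form is inf-sup stable on $V_h^\Gamma\times\Lambda_h$, which yields the bounded correction $\delta$). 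The one point to state carefully in a write-up is that $\mathcal{Q}_h^\sharp$ is the orthogonal projection with respect to the two-sided inner product $\sum_i(\cdot,\cdot)_{\Gamma_i}$ on $\bigoplus_i V_{h,i}^\Gamma$, so the best-approximation and correction estimates should be carried out in that doubled norm; since each interface is merely counted twice, this changes nothing in the final bound.
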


\subsection{Error estimates}
\label{sub:error_estimates}
We now turn to the a priori error analysis. Using \eqref{eq: discrete problem} and 
\eqref{weak-model-2}, we obtain the error equations
\begin{subequations} \label{eqs: step1 errors}
\begin{align}
	a(\bm{u} - \bm{u}_h, \bm{v}_h)
	- b(\bm{v}_h, \Pi^W p - p_h)
	&=
	a(\bm{u}, \bm{v}_h)
	- b(\bm{v}_h, p) ,
	& \forall \, \bm{v}_h &\in V_h,\\
	b(\Pi^V \bm{u} - \bm{u}_h, w_h)
	&= 0,
	& \forall \, w_h &\in W_h, \label{eq: pi u - u_h div free}
\end{align}
\end{subequations}
where we used the orthogonality property of $\Pi^W$ in the first equation and
the b-compatibility \eqref{B-compatible} of $\Pi^V$
in the second equation. It is important to note that we did not
use the first equation in \eqref{weak-model}, which requires a test function in
$H(\div, \Omega)$. Instead, the expression on the right is the consistency error, which
will be controlled later with the use of \eqref{weak-model-1}. We set the test
functions as
\begin{align} \label{eq: test functions}
	\bm{v}_h := \Pi^V \bm{u} - \bm{u}_h - \delta \bm{v}_h^p, \quad
	w_h := \Pi^W p - p_h,
\end{align}
where, using the proof of \eqref{ineq: b_infsup} from
Lemma~\ref{lem: Brezzi conditions}, $\bm{v}_h^p \in V_h$ is constructed
to satisfy
\begin{align} \label{eq: properties v_h^p}
	b(\bm{v}_h^p, \Pi^W p - p_h) &= \| \Pi^W p - p_h \|_W^2, &
	\| \bm{v}_h^p \|_V &\lesssim \| \Pi^W p - p_h \|_W,
\end{align}
and $\delta > 0$ is a constant to be chosen later. Now
\eqref{eqs: step1 errors} leads to
\begin{align} \label{eq: definition terms}
	a(\Pi^V & \bm{u} - \bm{u}_h, \Pi^V \bm{u} - \bm{u}_h)
	+
	\delta \| \Pi^W p - p_h \|_W^2 \nonumber \\
	&=
	a(\Pi^V\bm{u} - \bm{u}, \Pi^V\bm{u} - \bm{u}_h)
	+
	a(\bm{u} - \bm{u}_h, \delta \bm{v}_h^p)
	+ \left[
	a(\bm{u}, \bm{v}_h) - b(\bm{v}_h, p)
	\right].
\end{align}
For the left-hand side of \eqref{eq: definition terms}, \eqref{eq: pi
  u - u_h div free} and \eqref{ineq: a_coercive} imply
\begin{subequations} \label{eqs: component bounds}
\begin{align}
	\| \Pi^V \bm{u} - \bm{u}_h \|_V^2
	\lesssim a(\Pi^V \bm{u} - \bm{u}_h, \Pi^V \bm{u} - \bm{u}_h).
\end{align}
For first term on the right in \eqref{eq: definition terms},
using \eqref{ineq: a_cont} and Young's inequality, we have
\begin{align}
a(\Pi^V\bm{u} - \bm{u}, \Pi^V\bm{u} - \bm{u}_h)
\lesssim
	\frac{1}{2\epsilon_1} \| \Pi^V\bm{u} - \bm{u} \|_V^2
	+ \frac{\epsilon_1}{2} \| \Pi^V\bm{u} - \bm{u}_h \|_V^2,
\end{align}
with $\epsilon_1 > 0$ to be determined later. Similarly, for the
second term on the right in \eqref{eq: definition terms}, using
$\epsilon_2 > 0$ and the bound on $\bm{v}_h^p$ from \eqref{eq:
  properties v_h^p}, we obtain
\begin{align}
	a(\bm{u} - \bm{u}_h, \delta \bm{v}_h^p)
	&\lesssim
	(\| \Pi^V\bm{u} - \bm{u} \|_V
	+ \| \Pi^V\bm{u} - \bm{u}_h \|_V )
	\| \delta \bm{v}_h^p \|_V \nonumber\\
	&\lesssim
	\frac{1}{2} \| \Pi^V\bm{u} - \bm{u} \|_V^2
	+ \frac{\epsilon_2}{2} \| \Pi^V\bm{u} - \bm{u}_h \|_V^2
	+ \left(\frac{1}{2} + \frac{1}{2 \epsilon_2}\right) \delta^2
	\| \Pi^W p - p_h \|_W^2.
\end{align}
Finally, for the last term in \eqref{eq: definition terms}
we introduce the consistency error
\begin{align}\label{consist-error}
	\mathcal{E}_c :=
	\sup_{\tilde{\bm{v}}_h \in V_h} \frac{a(\bm{u}, \tilde{\bm{v}}_h) - b(\tilde{\bm{v}}_h, p)}{\| \tilde{\bm{v}}_h \|_V}. 
\end{align}
Using the properties \eqref{eq: properties v_h^p} and Young's
inequality with $\epsilon_3 > 0$, we derive:
	\begin{align}
		a(\bm{u}, \bm{v}_h) - b(\bm{v}_h, p) &\le
		\| \bm{v}_h \|_V \mathcal{E}_c \nonumber\\
		&\lesssim (\| \Pi^V \bm{u} - \bm{u}_h \|_V + \delta \| \Pi^W p - p_h \|_W ) \mathcal{E}_c \nonumber\\
	&\lesssim \frac{\epsilon_3}{2} \| \Pi^V \bm{u} - \bm{u}_h \|_V^2
	+ \frac12 \delta^2 \| \Pi^W p - p_h \|_W^2
	+ \left( \frac{1}{2\epsilon_3} + \frac12 \right) \mathcal{E}_c^2.
	\end{align}
\end{subequations}
Collecting \eqref{eqs: component bounds} and setting all $\epsilon_i$
sufficiently small, it follows that
\begin{align*}
	\| \Pi^V \bm{u} - \bm{u}_h \|_V^2
	+
	\delta \| \Pi^W p - p_h \|_W^2
	\lesssim
	\| \Pi^V \bm{u} - \bm{u} \|_V^2
	+
	\delta^2 \| \Pi^W p - p_h \|_W^2
	+
	\mathcal{E}_c^2.
\end{align*}
Subsequently, we set $\delta$ sufficiently small to obtain
\begin{align} \label{eq: pre-intermediate}
	\| \Pi^V \bm{u} - \bm{u}_h \|_V
	+
	\| \Pi^W p - p_h \|_W
	\lesssim
	\| \Pi^V \bm{u} - \bm{u} \|_V
	+
	\mathcal{E}_c,
\end{align}
which, combined with the triangle inequality, implies
\begin{align}  \label{eq: intermediate}
	\| \bm{u} - \bm{u}_h \|_V
	+
	\| p - p_h \|_W
	\lesssim
	\| \Pi^V \bm{u} - \bm{u} \|_V
	+
	\| \Pi^W p - p \|_W
	+
	\mathcal{E}_c.
\end{align}
The next step is to derive a bound on the consistency error
$\mathcal{E}_c$. For that, we recall its definition
\eqref{consist-error} and apply integration by parts on each
$\Omega_i$ with $p = 0$ on $\partial \Omega$:
\begin{align} \label{eq: nonconformity term}
  \mathcal{E}_c
  	&= \sup_{\tilde{\bm{v}}_h \in V_h} \| \tilde{\bm{v}}_h \|_V^{-1}
        \Big((K^{-1} \bm{u}, \tilde{\bm{v}}_h)_\Omega
	- \sum_i (p, \nabla \cdot \tilde{\bm{v}}_h)_{\Omega_i}\Big) \nonumber\\
	&=
	\sup_{\tilde{\bm{v}}_h \in V_h} \| \tilde{\bm{v}}_h \|_V^{-1}
	\sum_i -(p, \bm{\nu}_i \cdot \tilde{\bm{v}}_{h, i})_{\Gamma_i}.
\end{align}
In the last equality we used that $K^{-1} \bm{u} = - \nabla p$, which follows
from the weak formulation $\eqref{weak-model-1}$ using integration by parts.

We continue the derivation using arguments that rely on the choice of
extension operator, as outlined in the following two subsections.

\subsubsection{Consistency error using \texorpdfstring{$\myR_h^\sharp$}{R sharp}}
\label{ssub: err_projection_sharp}

For this choice of extension operator,
c.f. Section~\ref{sub: disc projection_to_the_mortar_space}, we use the weak continuity
from Lemma~\ref{lem psi sharp preserves mean} to bound the consistency
error \eqref{eq: nonconformity term}.  Let the discrete subspace
consisting of continuous mortar functions be denoted by $\Lambda_{h,
  c} \subset \Lambda_h$. Next, let $\Pi_c^\Lambda: H^1(\Gamma) \to
\Lambda_{h, c}$ be the Scott-Zhang interpolant \cite{Scott-Zhang} into $\Lambda_{h,
  c}$. This interpolant has the approximation property
\begin{align} \label{eq: approx Scott Zhang}
	\| p - \Pi_c^\Lambda p \|_{s_\Lambda, \Gamma}
	\lesssim h_\Gamma^{r_\Lambda - s_\Lambda} \| p \|_{r_\Lambda, \Gamma}, \quad
	0 \le r_\Lambda \le k_\Lambda + 1, \,\,
        0 \le s_\Lambda \le \min\{r_\Lambda,1\}.
\end{align}
Importantly, the Scott-Zhang interpolant preserves traces on $\partial
\Gamma$. This allows us to extend the function $(I - \Pi_c^\Lambda) p$
continuously by zero on $\partial \Omega \setminus \Gamma$ and we
let $E(I - \Pi_c^\Lambda) p$ denote the extended function.

Recall that $\tilde{\bm{v}}_h \in V_h^\sharp$ is weakly continuous due to Lemma~\ref{lem psi sharp preserves mean}. Consequently, $\sum_i (\Pi_c^\Lambda p, \bm{\nu}_i \cdot \tilde{\bm{v}}_{h, i})_{\Gamma_i} = 0$ and we use this to derive:
\begin{align}
\sum_i (p, \bm{\nu}_i \cdot \tilde{\bm{v}}_{h, i})_{\Gamma_i}
&=
\sum_i
((I - \Pi_c^\Lambda) p, \bm{\nu}_i \cdot \tilde{\bm{v}}_{h, i})_{\Gamma_i}
     =
\sum_i
( E(I - \Pi_c^\Lambda) p, \bm{\nu}_i \cdot \tilde{\bm{v}}_{h, i})_{\partial \Omega_i} \nonumber\\
&\lesssim
\sum_i
\| E(I - \Pi_c^\Lambda) p \|_{\frac12, \partial \Omega_i}
\| \tilde{\bm{v}}_h \|_{\div, \Omega_i}
\lesssim
\| (I - \Pi_c^\Lambda) p \|_{\frac12, \Gamma}
\| \tilde{\bm{v}}_h \|_V,
\end{align}
where we used the normal trace inequality $\|\bm{\nu}_i \cdot
\tilde{\bm{v}}_{h, i}\|_{-\frac12,\partial \Omega_i} \lesssim \|
\tilde{\bm{v}}_h \|_{\div, \Omega_i}$ \cite{boffi2013mixed}.  This
gives a bound on the consistency error $\mathcal{E}_c$ from
\eqref{eq: nonconformity term}, so \eqref{eq: intermediate} implies
\begin{align*} \label{eq: bound sharp}
	\| \bm{u} - \bm{u}_h \|_V
	+
	\| p - p_h \|_W
	\lesssim& \
	\| \Pi_\sharp^V \bm{u} - \bm{u} \|_V
	+
	\| \Pi^W p - p \|_W
	+
	\| \Pi_c^\Lambda p - p \|_{\frac12, \Gamma}.
\end{align*}
This bound, combined with the approximation properties
\eqref{eqs: approx props}, \eqref{approx-Pi-flat},
and \eqref{eq: approx Scott Zhang},
leads us to the main result of this subsection, given by the following theorem.

\begin{theorem} \label{thm: Error estimate sharp}
  In the case of $\myR_h^\sharp$, if \eqref{eq: flat mortar condition}
  holds and assuming sufficient
  regularity of the solution, then
\begin{align*}
&\| \bm{u} - \bm{u}_h \|_V + \| p - p_h \|_W
\lesssim \ h^{k_v + 1} \left( \sum_i \| \bm{u} \|_{k_v + 1, \Omega_i}
+ \sum_{i < j} \| \lambda \|_{k_v + 1, \Gamma_{ij}} \right) \\
&+ h^{k_w + 1} \sum_i \left(\| \nabla \cdot \bm{u} \|_{k_w + 1, \Omega_i}
+ \| p \|_{k_w + 1, \Omega_i} \right) 
+ h_\Gamma^{k_\Lambda + 1} \sum_{i < j} \| \lambda \|_{k_\Lambda + 1, \Gamma_{ij}} 
+ h_\Gamma^{k_\Lambda + \frac12} \| p \|_{k_\Lambda + 1, \Gamma} 
.
\end{align*}
\end{theorem}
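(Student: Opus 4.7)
The main work has already been packaged into two ingredients: the intermediate bound
$$\| \bm{u} - \bm{u}_h \|_V + \| p - p_h \|_W \lesssim \| \Pi_\sharp^V \bm{u} - \bm{u} \|_V + \| \Pi^W p - p \|_W + \| \Pi_c^\Lambda p - p \|_{\frac12, \Gamma}$$
derived just before the theorem statement, and the approximation properties collected in \eqref{eqs: approx props}, Lemma~\ref{lem: approximation prop Pi}, and \eqref{eq: approx Scott Zhang}. The plan is simply to insert the approximation bounds into this inequality with the largest admissible regularity exponents for each term.

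For the first term, I would invoke Lemma~\ref{lem: approximation prop Pi} with $r_v = \tilde r_v = k_v + 1$, $r_w = k_w + 1$, and $r_\Lambda = k_\Lambda + 1$. This is the step that requires the hypothesis \eqref{eq: flat mortar condition} (needed by Lemma~\ref{lem: approximation prop Pi} in the $\myR_h^\sharp$ case, since its proof passes through Lemma~\ref{lem: similarity projections}), and it produces the four contributions $h^{k_v+1}\sum_i \|\bm{u}\|_{k_v+1,\Omega_i}$, $h^{k_w+1}\sum_i \|\nabla\cdot\bm{u}\|_{k_w+1,\Omega_i}$, $h_\Gamma^{k_\Lambda+1}\sum_{i<j}\|\lambda\|_{k_\Lambda+1,\Gamma_{ij}}$, and $h^{k_v+1}\sum_{i<j}\|\lambda\|_{k_v+1,\Gamma_{ij}}$.

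For the second term, the $L^2$-projection bound \eqref{eq: approx prop w} with $r_w = k_w + 1$ yields $h^{k_w+1}\sum_i \|p\|_{k_w+1,\Omega_i}$. For the third (Scott--Zhang) term, I would apply \eqref{eq: approx Scott Zhang} with $s_\Lambda = 1/2$ and $r_\Lambda = k_\Lambda + 1$ (both admissible as soon as $k_\Lambda \ge 0$) to obtain $h_\Gamma^{k_\Lambda+1/2}\|p\|_{k_\Lambda+1,\Gamma}$. Adding the six contributions reproduces exactly the right-hand side claimed by the theorem.

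There is no real obstacle, since the substantive analytic work—the inf-sup/coercivity argument that produced \eqref{eq: intermediate}, the bound on the consistency error $\mathcal{E}_c$ using the weak continuity of $V_h^\sharp$ together with the zero-boundary extension of $(I-\Pi_c^\Lambda)p$, and the approximation estimate for $\Pi_\sharp^V$—has been done already. The only point to verify in passing is that the phrase ``sufficient regularity'' in the statement is understood to supply precisely the Sobolev indices used above, namely $\bm{u}|_{\Omega_i}\in (H^{k_v+1}(\Omega_i))^n$, $\nabla\cdot\bm{u}|_{\Omega_i}\in H^{k_w+1}(\Omega_i)$, $p|_{\Omega_i}\in H^{k_w+1}(\Omega_i)$, $\lambda|_{\Gamma_{ij}}\in H^{k_v+1}(\Gamma_{ij})\cap H^{k_\Lambda+1}(\Gamma_{ij})$, and $p|_\Gamma\in H^{k_\Lambda+1}(\Gamma)$, so that each application of the approximation lemmas is legitimate.
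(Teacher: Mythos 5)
Your proposal is correct and follows exactly the route the paper takes: the paper's "proof" of this theorem is precisely the sentence asserting that the intermediate bound combined with \eqref{eqs: approx props}, the approximation property of $\Pi_\sharp^V$ from Lemma~\ref{lem: approximation prop Pi}, and \eqref{eq: approx Scott Zhang} yields the stated estimate, and you have simply made the exponent choices ($r_v=\tilde r_v=k_v+1$, $r_w=k_w+1$, $r_\Lambda=k_\Lambda+1$, $s_\Lambda=1/2$) explicit. Your accounting of where \eqref{eq: flat mortar condition} enters (through Lemma~\ref{lem: similarity projections} inside the $\sharp$-case of Lemma~\ref{lem: approximation prop Pi}) and of the regularity needed for each term matches the paper.
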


\subsubsection{Consistency error using \texorpdfstring{$\myR_h^\flat$}{R flat}}
\label{ssub: err_projection_flat}

For this choice of extension operator, c.f. 
Section~\ref{sub: disc_projection_from_the_mortar_space}, we require a
different strategy to bound the consistency error \eqref{eq:
  nonconformity term} since weak continuity of normal traces
in $V_h^\flat$ is not guaranteed in general. We note that
$\tilde{\bm{v}}_h \in V_h^\flat$ can be decomposed as 
$\tilde{\bm{v}}_h =: \tilde{\bm{v}}_h^0 + \myR_h^\flat \tilde{\mu}_h$,
with $(\tilde{\bm{v}}_h^0, \tilde{\mu}_h) \in V_h^0 \times \Lambda_h$.
Using that $\mathcal{Q}_{h, i}^\flat$ is the $L^2$-projection onto $V_{h,i}^\Gamma$
and that $p$ is single-valued on $\Gamma$, we derive:
\begin{align*}
	\sum_i
	(p, \bm{\nu}_i \cdot \tilde{\bm{v}}_h)_{\Gamma_i}
	& = \sum_i
	(p, \mathcal{Q}_{h, i}^\flat \tilde{\mu}_{h, i})_{\Gamma_i}
	= \sum_i
	(\mathcal{Q}_{h, i}^\flat p, \tilde{\mu}_{h, i})_{\Gamma_i} \\
	&= \sum_i
	(\mathcal{Q}_{h, i}^\flat p - p, \tilde{\mu}_{h, i})_{\Gamma_i}
	\le \sum_i
	\| \mathcal{Q}_{h, i}^\flat p - p \|_{\Gamma_i}
	\| \tilde{\mu}_{h, i} \|_{\Gamma_i}.
\end{align*}
We continue the bound using the mortar condition
\eqref{eq: flat mortar condition} and a discrete trace inequality:
\begin{align*}
\ldots & \lesssim \sum_i
	\| \mathcal{Q}_{h, i}^\flat p - p \|_{\Gamma_i}
	\| \mathcal{Q}_{h, i}^\flat \tilde{\mu}_{h, i} \|_{\Gamma_i}
	= \sum_i
	\| \mathcal{Q}_{h, i}^\flat p - p \|_{\Gamma_i}
	\| \bm{\nu}_i \cdot \tilde{\bm{v}}_{h, i} \|_{\Gamma_i}
	\\
	&\lesssim \sum_i
	\| \mathcal{Q}_{h, i}^\flat p - p \|_{\Gamma_i}
	\left( h^{-1/2} \| \tilde{\bm{v}}_{h, i} \|_{\Omega_i} \right)
	\lesssim \Big(h^{-1/2} \sum_i
		\| \mathcal{Q}_{h, i}^\flat p - p \|_{\Gamma_i} \Big)
	\| \tilde{\bm{v}}_h \|_V
\end{align*}
With this result, we bound $\mathcal{E}_c$ from \eqref{eq: nonconformity term}
and obtain from \eqref{eq: intermediate}:
\begin{align*} 
	\| \bm{u} - \bm{u}_h \|_V
	+
	\| p - p_h \|_W
	\lesssim& \
	\| \Pi_\flat^V \bm{u} - \bm{u}\|_V
	+
	\| \Pi^W p - p \|_W +
	h^{-1/2} \sum_i
	\| \mathcal{Q}_{h, i}^\flat p - p \|_{\Gamma_i},
\end{align*}
which, combined with \eqref{eqs: approx props} and \eqref{approx-Pi-flat},
results in the the following theorem.

\begin{theorem} \label{thm: Error estimate flat}
In the case of $\myR_h^\flat$, if \eqref{eq: flat mortar condition}
holds and the solution is sufficiently regular, then
\begin{align*}
\| \bm{u} - \bm{u}_h \|_V & + \| p - p_h \|_W
\lesssim \ h^{k_v + 1} \sum_i \| \bm{u} \|_{k_v + 1, \Omega_i}
+ h_\Gamma^{k_\Lambda+1}\sum_{i < j} \| \lambda \|_{k_\Lambda + 1, \Gamma_{ij}} 
\\
& 
+ h^{k_w + 1} \sum_i \left(\| \nabla \cdot \bm{u} \|_{k_w + 1, \Omega_i}
+ \| p \|_{k_w + 1, \Omega_i} \right) 
+ h^{k_v + \frac12} \sum_i \| p \|_{k_v + 1, \Gamma_i}
.
\end{align*}
\end{theorem}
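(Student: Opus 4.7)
The plan is to start from the intermediate error bound \eqref{eq: intermediate}, which already isolates the approximation error $\|\Pi_\flat^V\bm{u}-\bm{u}\|_V + \|\Pi^W p - p\|_W$ and the consistency error $\mathcal{E}_c$ from \eqref{consist-error}. The approximation terms are immediately controlled by Lemma~\ref{lem: approximation prop Pi} (specifically the bound \eqref{approx-Pi-flat}) together with \eqref{eq: approx prop w}, which yield the $h^{k_v+1}$, $h^{k_w+1}$, and $h_\Gamma^{k_\Lambda+1}$ contributions in the stated estimate. The nontrivial work is therefore to bound $\mathcal{E}_c$.

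For $\mathcal{E}_c$, I would begin as in \eqref{eq: nonconformity term}, using $K^{-1}\bm{u} = -\nabla p$ and integration by parts to rewrite the consistency functional as $\sum_i (p, \bm{\nu}_i\cdot\tilde{\bm{v}}_{h,i})_{\Gamma_i}$. Unlike the $\sharp$ case, test functions in $V_h^\flat$ are not weakly continuous across $\Gamma$, so a Lagrange-multiplier cancellation is not available. Instead, I would decompose $\tilde{\bm{v}}_h = \tilde{\bm{v}}_h^0 + \myR_h^\flat\tilde{\mu}_h$ with $(\tilde{\bm{v}}_h^0,\tilde{\mu}_h)\in V_h^0\times\Lambda_h$ and use $\bm{\nu}_i\cdot\tilde{\bm{v}}_{h,i} = \bm{\nu}_i\cdot\myR_{h,i}^\flat\tilde{\mu}_h = \mathcal{Q}_{h,i}^\flat\tilde{\mu}_{h,i}$, which follows from \eqref{eq: projection from mortar}. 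Moving the $L^2$-projection onto $p$ (and using single-valuedness of $p$ on $\Gamma$) reduces the sum to $\sum_i (\mathcal{Q}_{h,i}^\flat p - p, \tilde{\mu}_{h,i})_{\Gamma_i}$.

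To convert this into an estimate against $\|\tilde{\bm{v}}_h\|_V$, I would invoke the mortar assumption \eqref{eq: flat mortar condition} to replace $\|\tilde{\mu}_{h,i}\|_{\Gamma_i}$ by the traces $\|\mathcal{Q}_{h,i}^\flat\tilde{\mu}_{h,i}\|_{\Gamma_i} = \|\bm{\nu}_i\cdot\tilde{\bm{v}}_{h,i}\|_{\Gamma_i}$, and then apply a discrete trace inequality to pass to $h^{-1/2}\|\tilde{\bm{v}}_{h,i}\|_{\Omega_i}$. This yields $\mathcal{E}_c \lesssim h^{-1/2}\sum_i\|\mathcal{Q}_{h,i}^\flat p - p\|_{\Gamma_i}$. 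Plugging into \eqref{eq: intermediate} and using the $L^2$-projection bound \eqref{eq: approx prop Q} with $r_v = k_v+1$ on each $\Gamma_i$ yields the advertised $h^{k_v+\frac12}\|p\|_{k_v+1,\Gamma_i}$ contribution. Assembling this with the previously noted approximation terms produces the theorem.

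The main obstacle is precisely the $h^{-1/2}$ factor generated by the discrete trace inequality, which costs half an order of convergence compared with the $\sharp$ variant. This loss is unavoidable here because $V_h^\flat$ does not embed into the weakly continuous space $V_{h,c}$, so the non-conformity is measured directly in $L^2(\Gamma_i)$ rather than in $H^{1/2}(\Gamma)$ as in Theorem~\ref{thm: Error estimate sharp}; the extra smoothness of $p$ on $\Gamma_i$ in the stated bound is exactly what is needed to absorb this loss through \eqref{eq: approx prop Q}.
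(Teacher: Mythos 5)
Your proposal is correct and follows essentially the same route as the paper: the intermediate bound \eqref{eq: intermediate} combined with \eqref{approx-Pi-flat} and \eqref{eq: approx prop w} for the approximation terms, then the decomposition $\tilde{\bm{v}}_h = \tilde{\bm{v}}_h^0 + \myR_h^\flat\tilde{\mu}_h$, the self-adjointness of $\mathcal{Q}_{h,i}^\flat$ together with single-valuedness of $p$, the mortar condition \eqref{eq: flat mortar condition}, and a discrete trace inequality to obtain $\mathcal{E}_c \lesssim h^{-1/2}\sum_i\|\mathcal{Q}_{h,i}^\flat p - p\|_{\Gamma_i}$, concluded via \eqref{eq: approx prop Q}. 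This matches the argument in Section~\ref{ssub: err_projection_flat}, including the correct identification of where the half-order loss originates.
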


\subsubsection{Comparison}
\label{ssub:comparison}

The previous two sections indicate that theoretically the choice of extension
operator affects the resulting discretization error.
Most importantly, the estimates from Theorems~\ref{thm:
  Error estimate sharp} and \ref{thm: Error estimate flat} differ 
in the suboptimal pressure term and thus a comparison of the choices
$\myR_h^\sharp$ and $\myR_h^\flat$ leads us to comparing the terms
\begin{align*}
 	&h_\Gamma^{k_\Lambda + 1/2} \| p \|_{k_\Lambda + 1, \Gamma} &
 	&\text{ versus }&
 	&h^{k_v + 1/2} \sum_i \| p \|_{k_v + 1, \Gamma_i}.
\end{align*}
It follows from these terms that both choices will lead to a
suboptimal convergence rate if $k_\Lambda = k_v$, i.e. if the
polynomial orders of $\Lambda_h$ and $V_h$ are equal. This loss is
inevitable in the case that the projection onto the trace spaces
($\flat$) is chosen. However, it can be remediated if the projection
is chosen onto the space of weakly continuous functions ($\sharp$) by
setting $k_\lambda > k_v$, i.e. we choose a higher-order mortar space
$\Lambda_h$ within the limit of the mortar condition \eqref{eq: flat
  mortar condition}. This
behavior is similar to the pressure-mortar method \cite{ACWY,APWY}. It
is important to note, however, that the convergence rates we observe
numerically are unaffected by this suboptimal term, as shown in
Section~\ref{sec:numerical_results}. Hence, increasing the polynomial
order of the mortar space may not be necessary in practice.

\subsubsection{The interface flux}
\label{ssub:error_estimate_of_the_mortar_variable}

The error estimates derived in the previous sections show convergence
of the subdomain variables $\bm{u}_h$ and $p_h$. However, convergence
of the mortar variable $\lambda_h$ itself is not guaranteed at this
point. We therefore devote this section to finding error estimates of
the mortar variable for both types of projection operators. The
results are presented in a general setting. However, we remind that
the discrete solution $(\bm{u}_h, p_h) \in V_h \times W_h$ implicitly
depends on the chosen the projection operator.

In the following lemma we consider two measures of the interface flux error,
comparing the true interface flux $\lambda$ to either the mortar flux $\lambda_h$
or to the normal trace of the velocity $\bm{u_h}$ on
$\Gamma$, $\bnu_i\cdot \bm{u}_h = \mathcal{Q}_{h,i}\lambda_h$.

\begin{lemma} \label{lem: mortar error}
  If \eqref{eq: flat mortar condition} holds for $\myR_h^\flat$
  and \eqref{eq: sharp mortar condition} holds additionally for $\myR_h^\sharp$, and the solution is sufficiently regular, then
\begin{align*}
\| \lambda - \lambda_h \|_{\Gamma}
&\lesssim h_\Gamma^{k_\Lambda + 1} \sum_{i < j} \| \lambda \|_{k_\Lambda + 1, \Gamma_{ij}}
+ h^{-1/2} (\| \Pi^V \bm{u} - \bm{u} \|_V + \mathcal{E}_c), \\
\sum_i\| \lambda - \mathcal{Q}_{h, i} \lambda_h \|_{\Gamma_i}
&\lesssim h_\Gamma^{k_\Lambda + 1} \sum_{i < j} \| \lambda \|_{k_\Lambda + 1, \Gamma_{ij}}
+ h^{-1/2} (\| \Pi^V \bm{u} - \bm{u} \|_V + \mathcal{E}_c) \\
& \qquad\qquad\qquad\qquad + h^{k_v + 1} \sum_{i < j} \| \lambda \|_{k_v + 1, \Gamma_{ij}}. 
\end{align*}
\end{lemma}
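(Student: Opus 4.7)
The plan is to decompose each error quantity into an approximation error and a discrete term that can be controlled through the interior error $\|\Pi^V\bu-\bu_h\|_V$, for which we already have the bound \eqref{eq: pre-intermediate}. The key tool connecting the mortar error and the interior velocity error is the identity \eqref{trace-Pi-R-Q}, i.e.\ $\bnu_i\cdot\Pi^V\bu = \mathcal{Q}_{h,i}\Pi^\Lambda\lambda$ and $\bnu_i\cdot\bu_h = \mathcal{Q}_{h,i}\lambda_h$, together with a standard discrete trace inequality $\|\bnu_i\cdot\bv_{h,i}\|_{\Gamma_i}\lesssim h^{-1/2}\|\bv_{h,i}\|_{\Omega_i}$.

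For the first estimate, I would start from the triangle inequality
\begin{align*}
\|\lambda-\lambda_h\|_\Gamma \le \|\lambda-\Pi^\Lambda\lambda\|_\Gamma + \|\Pi^\Lambda\lambda-\lambda_h\|_\Gamma,
\end{align*}
bound the first term by \eqref{eq: approx prop lambda}, and apply the mortar assumption (\eqref{eq: flat mortar condition} for $\myR_h^\flat$, \eqref{eq: sharp mortar condition} for $\myR_h^\sharp$) to the discrete function $\Pi^\Lambda\lambda-\lambda_h\in\Lambda_h$. Using \eqref{trace-Pi-R-Q} rewrites
\begin{align*}
\mathcal{Q}_{h,i}(\Pi^\Lambda\lambda-\lambda_h) = \bnu_i\cdot\Pi^V\bu - \bnu_i\cdot\bu_h = \bnu_i\cdot(\Pi^V\bu-\bu_h),
\end{align*}
so the discrete trace inequality and summation over $i$ yield $\|\Pi^\Lambda\lambda-\lambda_h\|_\Gamma \lesssim h^{-1/2}\|\Pi^V\bu-\bu_h\|_V$. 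Finally, \eqref{eq: pre-intermediate} bounds this by $h^{-1/2}(\|\Pi^V\bu-\bu\|_V+\mathcal{E}_c)$, giving the claim.

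For the second estimate, the idea is to avoid applying the mortar assumption and instead insert the quantity $\bnu_i\cdot\Pi^V\bu = \mathcal{Q}_{h,i}\Pi^\Lambda\lambda$ between $\lambda$ and $\mathcal{Q}_{h,i}\lambda_h=\bnu_i\cdot\bu_h$. This produces
\begin{align*}
\lambda - \mathcal{Q}_{h,i}\lambda_h = \bigl(\lambda - \mathcal{Q}_{h,i}\Pi^\Lambda\lambda\bigr) + \bnu_i\cdot(\Pi^V\bu - \bu_h).
\end{align*}
The first summand is further split as $(\lambda-\mathcal{Q}_{h,i}\lambda)+\mathcal{Q}_{h,i}(\lambda-\Pi^\Lambda\lambda)$; $L^2$-stability of $\mathcal{Q}_{h,i}$ together with \eqref{eq: approx prop Q} and \eqref{eq: approx prop lambda} produces the $h^{k_v+1}\|\lambda\|_{k_v+1,\Gamma_{ij}}$ and $h_\Gamma^{k_\Lambda+1}\|\lambda\|_{k_\Lambda+1,\Gamma_{ij}}$ contributions, respectively. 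The second summand is again bounded via the discrete trace inequality and \eqref{eq: pre-intermediate}, generating the $h^{-1/2}(\|\Pi^V\bu-\bu\|_V+\mathcal{E}_c)$ term after summing over $i$.

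The main subtlety is the $h^{-1/2}$ loss, which is not an artifact of the proof but reflects the mismatch between the energy norm on $V$ and the $L^2(\Gamma)$ norm we are trying to control; it cannot be avoided without stronger regularity or a weaker norm on the mortar side. A secondary point requiring care is verifying that the identity $\mathcal{Q}_{h,i}\Pi^\Lambda\lambda = \bnu_i\cdot\Pi^V\bu$ holds for both extension operators, which is exactly what \eqref{trace-Pi-R-Q} guarantees through the construction of $\Pi_\flat^V$ and $\Pi_\sharp^V$.
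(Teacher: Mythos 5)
Your proposal is correct and follows essentially the same route as the paper: triangle inequality through $\Pi^\Lambda\lambda$, the mortar condition, the trace identity \eqref{trace-Pi-R-Q}, a discrete trace inequality, and \eqref{eq: pre-intermediate}. The only immaterial difference is in the second estimate, where the paper reduces it to the first bound via $L^2$-stability of $\mathcal{Q}_{h,i}$ and \eqref{eq: approx prop Q}, whereas you insert $\mathcal{Q}_{h,i}\Pi^\Lambda\lambda$ directly and bound $\bm{\nu}_i\cdot(\Pi^V\bm{u}-\bm{u}_h)$ by the trace inequality, which incidentally avoids invoking the mortar condition a second time.
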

\begin{proof}
  We have
\begin{align*}
\| \lambda - \lambda_h \|_{\Gamma}
&\le
\| \lambda - \Pi^\Lambda \lambda \|_{\Gamma}
+ \| \Pi^\Lambda \lambda - \lambda_h \|_{\Gamma} \\
&\lesssim
\| \lambda - \Pi^\Lambda \lambda \|_{\Gamma}
+ \sum_i \| \mathcal{Q}_{h, i} (\Pi^\Lambda \lambda - \lambda_h) \|_{\Gamma_i} \\
& = \| \lambda - \Pi^\Lambda \lambda \|_{\Gamma}
+ \sum_i \| \bm{\nu}_i \cdot (\Pi^V \bm{u} - \bm{u}_h) \|_{\Gamma_i} \\
& \lesssim \| \lambda - \Pi^\Lambda \lambda \|_{\Gamma}
+ h^{-1/2}\| \Pi^V \bm{u} - \bm{u}_h \|_{V},
\end{align*}
where we used the mortar condition \eqref{eq: flat mortar condition}
or \eqref{eq: sharp mortar condition} corresponding to the choice of
projection operator, \eqref{trace-Pi-R-Q}, and a discrete trace
inequality. The approximation property \eqref{eq: approx prop lambda}
and inequality \eqref{eq: pre-intermediate} then give us the first bound.
The second bound follows from the triangle inequality,
$$
\| \lambda - \mathcal{Q}_{h,i} \lambda_h \|_{\Gamma_i} \le
\| \lambda - \mathcal{Q}_{h,i}\lambda\|_{\Gamma_i} + \|\mathcal{Q}_{h,i}(\lambda - \lambda_h)\|_{\Gamma_i}
\le \| \lambda - \mathcal{Q}_{h,i}\lambda\|_{\Gamma_i}
+ \|\lambda - \lambda_h\|_{\Gamma_i},
$$
and the use of the approximation property \eqref{eq: approx prop Q}.
\end{proof}

The estimates from Lemma~\ref{lem: mortar error}
can be further developed by invoking the
approximation properties \eqref{eqs: approx props} and bounding the
consistency error $\mathcal{E}_c$ as in Sections \ref{ssub: err_projection_sharp} and \ref{ssub: err_projection_flat}. In their presented form,
however, these results emphasize that a half order loss in convergence
of the mortar variable is expected compared to the velocity.

\section{Reduction to an interface problem}
\label{sec:reduction_to_an_interface_problem}

We continue by presenting an iterative solution method for the
flux-mortar method \eqref{eq: discrete problem}. For that, we note
that the decomposition \eqref{eq: definition V_h} of $V_h$ into
interior and interface degrees of freedom allows us to reformulate the
method as an equivalent problem only in the flux mortar variable
$\lambda_h$. We recall that the method \eqref{eq: discrete problem}
can be written equivalently in the domain decomposition form
\eqref{dd-formulation}. Equation \eqref{dd-2} enforces weak pressure
continuity on the interface and is the basis for the interface
problem.  In order to set up this reduced problem, we first solve two
subproblems that incorporate the source term $f$ and provide the
right-hand side for the problem. Next, the reduced problem is set up
and solved. Finally, a post-processing step is necessary to obtain the
full solution to the original problem \eqref{eq: discrete
  problem}. For notational brevity, we omit the subscript $h$ on all
functions in this section, keeping in mind that all functions are
discrete. In the solution process we will utilize a generic extension
$\tilde\myR_h\mu \in \bigoplus V_{h,i}$ such that $\tilde\myR_{h,i}\mu
= \mathcal{Q}_{h,i}\mu$ on $\Gamma_i$. In practice,
$\tilde\myR_{h,i}\mu$ can be simply chosen to have all degrees of
freedom not associated with $\Gamma_i$ equal to zero. Recall that
$V_h = \bigoplus(V_{h,i}^0 \oplus \myR_{h,i}\Lambda)$ with $\myR_{h,i}$
the discrete extension \eqref{eq: R_h problem}. Since
$\tilde\myR_{h,i}\mu = \myR_{h,i}\mu + \bm{v}_{\mu,i}^0$ for some
$\bm{v}_{\mu,i}^0 \in V_{h,i}^0$, the spaces $\bigoplus(V_{h,i}^0 \oplus \myR_{h,i}\Lambda)$
and $\bigoplus(V_{h,i}^0 \oplus \tilde\myR_{h,i}\Lambda)$ are the same.
We will also utilize the orthogonal decomposition
$\Lambda_h = \Lambda_h^0 \oplus \overline\Lambda_h$, where
\begin{align} \label{eq: def Lambda^0}
\Lambda_h^0 := \{ \mu \in \Lambda_h : \ b(\tilde\myR_h \mu, s) = 0, \ \forall s \in S_H \}.
\end{align}
Let $B:
\Lambda_h \to S_H$ be defines as: $\forall \, \mu \in \Lambda_h, (B
\mu, s)_\Omega := b(\tilde\myR_h\mu,s) \ \forall \, s \in S_H$.  We note
that $\ker B = \Lambda_h^0$. Using \cite[Proposition~7.4.1]{QV-NumPDE}, the inf-sup
condition from Corollary~\ref{cor: inf-sup S_H} implies that $B^T$ is an isomorphism
from $S_H$ to the polar set of $\ker B$, $\{g \in \Lambda_h:
(g,\mu)_\Omega = 0 \ \forall \, \mu \in \ker B\}$, which is exactly
$\overline\Lambda_h$. Equivalently, $B$ is an isomorphism from
$\overline\Lambda_h$ to $S_H$.

The first step aims to capture the mean influence of the source term
$f$ on each interior subdomain using this space $S_H$, c.f. \eqref{eq:
  definition S_Hi darcy}. We solve the following global coarse
problem: Find $\overline\lambda_f \in \overline\Lambda_h$ such that
\begin{align}
  b(\tilde\myR_h \overline\lambda_f, s) = (f, s)_\Omega,
  \quad \forall s &\in S_H. \label{preproblem}
\end{align}
This problem has the form $B\overline\lambda_f = \bar f$ in $S_H$, thus it
has a unique solution, since $B: \overline\Lambda_h \to S_H$ is an isomorphism.

Second, we use $\overline\lambda_f$ to solve independent, local subproblems to
capture the remaining influence of the source term $f$: Find
$(\bm{u}_f^0, p_f^0, r_f) \in V_h^0 \times W_h \times S_H$ such that
\begin{subequations}\label{pre-2}
\begin{align}
	a(\bm{u}_f^0, \bm{v}^0) - b(\bm{v}^0, p_f^0)
	&=
	- a(\tilde\myR_h \overline\lambda_f, \bm{v}^0)
	,
	& \forall \bm{v}^0 &\in V_h^0, \\
	b(\bm{u}_f^0, w) - (r_f, w)_{\Omega}
	&= - b(\tilde\myR_h \overline\lambda_f, w) + (f, w)_{\Omega} ,
	& \forall w &\in W_h,
	\\
	(p_f^0, s)_{\Omega}
	&= 0,
	& \forall s &\in S_H.
\end{align}
\end{subequations}
Here, we enforce $p_f^0 \perp S_H$ with the use of a Lagrange
multiplier $r_f$. The well posedness
of \eqref{pre-2} follows from the argument for solvability of the
discrete extension problem \eqref{eq: R_h problem} given in
Lemma~\ref{lem: R_h is well-posed}. We further note that, setting $w =
r_f \in S_H$ and using \eqref{preproblem} implies that $r_f =
0$. Therefore, the velocity $\bm{u}_f := \bm{u}_f^0 + \tilde\myR_h
\overline\lambda_f$ satisfies the mass conservation equation $b(\bm{u}_f, w) =
(f, w)_{\Omega}$ for all $w \in W_h$.

The next step is to satisfy the Darcy equation \eqref{eqs: discrete
  problem eq1} by updating $\bm{u}_f$ with a divergence-free function
$\myR_h \lambda^0$. This is done by solving the reduced interface problem:
Find $\lambda^0 \in \Lambda_h^0$ such that
\begin{align}\label{eqs: reduced problem}
a(\myR_h \lambda^0, \tilde\myR_h \mu^0) - b(\tilde\myR_h \mu^0, p^{\lambda^0}) &=
- a(\bm{u}_f, \tilde\myR_h \mu^0) + b(\tilde\myR_h \mu^0, p_f^0), &
\forall \mu^0 \in \Lambda_h^0,
\end{align}
in which the pair $(\myR_h \lambda^0, p^{\lambda^0})$ solves the
discrete extension problem \eqref{eq: R_h problem}. The solvability of
\eqref{eqs: reduced problem} is established in Lemma~\ref{lem: SPD}
below. We note that $\nabla\cdot \myR_{h, i} \lambda^0 \in S_{H,i}$,
see \eqref{Rh-stab}, therefore $\nabla\cdot \myR_{h, i} \lambda^0 = 0$,
since $\lambda^0 \in \Lambda_h^0$.

After solving the reduced problem, we require one more step to obtain
the correct mean pressure in each interior subdomain. Thus, we 
construct $\overline{p}_\lambda \in S_H$ such that
\begin{align} \label{eq: postproblem}
b(\tilde\myR_h \mu, \overline{p}_\lambda)
&= a(\bm{u}_f + \myR_h \lambda^0, \tilde\myR_h \mu) - b(\tilde\myR_h \mu, p^{\lambda^0} + p_f^0), &
\forall \mu \in \Lambda_h.
\end{align}
Note that this equation is trivial for $\mu \in \Lambda_h^0$ due to
\eqref{eqs: reduced problem}. Hence we restrict ourselves to
$\overline\mu \in \overline\Lambda_h$, which results in a coarse grid
problem of the form $B^T \overline{p}_\lambda = g$ in
$\overline\Lambda_h$. Since $B^T: S_H \to \overline\Lambda_h$ is an isomorphism,
the problem has a unique solution.

We now have all the necessary ingredients to construct:
\begin{align}\label{eqs: construct solution}
  \bm{u} := \bm{u}_f + \myR_h \lambda^0 = 
  \bm{u}_f^0 + \myR_h \lambda^0 + \tilde\myR_h\overline\lambda_f, \quad
  p := p_f^0 + p^{\lambda^0}
  + \overline{p}_\lambda.
\end{align}
It is elementary to check that $(\bm{u}, p) \in V_h \times W_h$ indeed
solves \eqref{eq: discrete problem}. The corresponding mortar flux is
$\lambda = \lambda^0 + \overline\lambda_f$. We next show the solvability of
\eqref{eqs: reduced problem}.
\begin{lemma} \label{lem: SPD}
The bilinear form of the reduced problem \eqref{eqs: reduced problem}, given by
\begin{align}\label{a-Gamma}
a_\Gamma(\lambda, \mu) := a(\myR_h \lambda, \tilde\myR_h \mu) - b(\tilde\myR_h \mu, p^{\lambda}),
\end{align}
is symmetric and positive definite in $\Lambda_h^0 \times \Lambda_h^0$.
\end{lemma}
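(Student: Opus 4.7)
The plan is to reduce $a_\Gamma(\lambda,\mu)$ on $\Lambda_h^0 \times \Lambda_h^0$ to the manifestly symmetric form $a(\myR_h \lambda, \myR_h \mu)$, from which both symmetry and positive definiteness follow easily. Two observations drive the reduction: (i) for every $\mu \in \Lambda_h$, the difference $\bm{v}_\mu^0 := \tilde\myR_h \mu - \myR_h \mu$ lies in $V_h^0$, since both extensions have the same normal trace $\mathcal{Q}_{h,i}\mu$ on each $\Gamma_i$ by \eqref{Rh-bc} (together with \eqref{psi-Qh-sharp}, resp.\ \eqref{psi-Qh-flat}) and by definition of $\tilde\myR_h$; and (ii) $\nabla\cdot \myR_h \mu = 0$ whenever $\mu \in \Lambda_h^0$.

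For (ii), I would test the defining relation of $\Lambda_h^0$ in \eqref{eq: def Lambda^0} against the element $s \in S_H$ that equals one on a given interior subdomain $\Omega_i$ and zero on all other interior subdomains. The divergence theorem and the definition of $\tilde\myR_h$ give
\[
0 = b(\tilde\myR_h \mu, s) = (\bm{\nu}_i \cdot \tilde\myR_{h,i}\mu, 1)_{\Gamma_i} = (\mathcal{Q}_{h,i}\mu, 1)_{\Gamma_i} = (\mu, 1)_{\Gamma_i},
\]
since constants lie in $V_{h,i}^\Gamma$ (and in $V_{h,c}^\Gamma$ in the $\sharp$ case, cf.\ Lemma~\ref{lem psi sharp preserves mean}). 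Hence $\overline{\mu}_i = 0$, and Lemma~\ref{lem: R_h is well-posed} yields $\nabla \cdot \myR_{h,i}\mu = 0$; the identity is automatic on non-interior subdomains.

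With (i) and (ii) in hand, the reduction is a short computation. Using (i) and equation \eqref{Rh-eq1} from the extension problem with test function $\bm{v}_\mu^0 \in V_h^0$ gives $a(\myR_h \lambda, \bm{v}_\mu^0) - b(\bm{v}_\mu^0, p^\lambda) = 0$, whence
\[
a_\Gamma(\lambda, \mu) = a(\myR_h \lambda, \myR_h \mu + \bm{v}_\mu^0) - b(\myR_h \mu + \bm{v}_\mu^0, p^\lambda) = a(\myR_h \lambda, \myR_h \mu) - b(\myR_h \mu, p^\lambda).
\]
By (ii) the last term vanishes, leaving $a_\Gamma(\lambda, \mu) = a(\myR_h \lambda, \myR_h \mu)$, which is symmetric since $K^{-1}$ is symmetric. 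For positive definiteness, coercivity \eqref{K-spd} yields $a_\Gamma(\lambda, \lambda) \gtrsim \|\myR_h \lambda\|_\Omega^2 \ge 0$. If this vanishes then $\myR_h \lambda = 0$, so $\mathcal{Q}_{h,i}\lambda = \bm{\nu}_i \cdot \myR_{h,i}\lambda = 0$ for every $i$, and the mortar condition \eqref{eq: flat mortar condition} (or \eqref{eq: sharp mortar condition} in the $\sharp$ case) forces $\lambda = 0$.

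The main subtlety is establishing (ii): one must correctly interpret the orthogonality condition \eqref{eq: def Lambda^0} to conclude that the mean normal flux vanishes on every interior $\Gamma_i$, and then invoke the divergence identity of Lemma~\ref{lem: R_h is well-posed}. Once this is in place, symmetry and positive definiteness come out cleanly.
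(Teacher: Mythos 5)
Your proof is correct and follows essentially the same route as the paper: decompose $\tilde\myR_h \mu = \myR_h \mu + \bm{v}_\mu^0$ with $\bm{v}_\mu^0 \in V_h^0$, use \eqref{Rh-eq1} to eliminate the $\bm{v}_\mu^0$ contributions, use $\nabla\cdot\myR_{h,i}\mu = 0$ for $\mu \in \Lambda_h^0$ to drop $b(\myR_h\mu, p^\lambda)$, and conclude definiteness via \eqref{eq: flat mortar condition} or \eqref{eq: sharp mortar condition}. The only difference is that you spell out the verification of $\nabla\cdot\myR_{h,i}\mu = 0$ (via the mean-preservation in Lemmas~\ref{lem psi sharp preserves mean} and \ref{lem psi flat preserves mean} and the identity $\nabla\cdot\myR_{h,i}\mu = \overline{\mu}_i$ from Lemma~\ref{lem: R_h is well-posed}), which the paper leaves as a brief remark preceding the lemma.
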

\begin{proof}
Using that $\tilde\myR_h \mu = \myR_h \mu + \bm{v}_\mu^0$, we have
\begin{align*}
  a_\Gamma(\lambda, \mu) & = a(\myR_h \lambda, \myR_h \mu) - b(\myR_h \mu, p^{\lambda})
  + a(\myR_h \lambda, \bm{v}_\mu^0) - b(\bm{v}_\mu^0, p^{\lambda}) \\
  & = a(\myR_h \lambda, \myR_h \mu) - b(\myR_h \mu, p^{\lambda})
  = a(\myR_h \lambda, \myR_h \mu).
\end{align*}
Here we used \eqref{Rh-eq1} in the second equality. For the last
equality we used that $\nabla \cdot \myR_{h, i} \mu = 0$ in
$\Omega_i$, since $\mu \in \Lambda_h^0$, c.f. \eqref{Rh-eq2} stated
for $\mu$. We therefore conclude that $a_\Gamma(\lambda, \mu)$ is
symmetric and positive semidefinite in $\Lambda_h^0 \times \Lambda_h^0$. Moreover,
if $\myR_h \lambda = 0$, then $\mathcal{Q}_{h}\lambda = 0$, thus $\lambda = 0$,
due to \eqref{eq: flat mortar condition} or \eqref{eq: sharp mortar condition}. Hence
$a_\Gamma$ is positive definite.
\end{proof}

The main implication of Lemma~\ref{lem: SPD} is that the interface
problem \eqref{eqs: reduced problem} can be solved using iterative
methods such as the Conjugate Gradient (CG) method. An important
observation is that mass is conserved locally, by construction, even
if the iterative solver is terminated before
convergence. Specifically, the component $\bm{u}_f$ is computed a
priori and the divergence-free update defined by $\lambda^0$ only
improves the accuracy of the solution with respect to Darcy's law.

\begin{remark}
  The implementation of problems \eqref{preproblem} and \eqref{eq:
    postproblem} requires solving a system with the same coarse
  matrix. The same system also occurs in the computation of the
  projection onto $\Lambda_h^0$ required in \eqref{eqs: reduced
    problem}. We refer the reader to \cite{VasWangYot} for an algebraic
  formulation for solving a global saddle problem with singular
  subdomain problems of a type similar to \eqref{eq: discrete
    problem}, which is based on the FETI method \cite{Toselli-Widlund}. We note
  that the incorporation of the coarse problem results in convergence
  of the interface iterative solver that is independent of the
  subdomain size.
\end{remark}

\section{Generalization to saddle point problems}
\label{sec:generalization_to_elliptic_problems}

In this section, we extend the concepts developed for the Darcy model
to a more general setting and introduce the flux-mortar MFE method for
a wider class of saddle point problems. We apply the general theory
to the coupled Stokes-Darcy problem in Section~\ref{sub:coupling_stokes_flow}.

Given a pair of function spaces $\tilde{V}
\times \tilde{W}$ on $\Omega$ and a non-overlapping
decomposition $\Omega = \bigcup_{i \in I_\Omega} \Omega_i$, we consider the problem:
Find $(\bm{u}, p) \in \tilde V \times \tilde W$ such that
\begin{subequations} \label{eq: general form}
	\begin{align}
		\sum_i a_i(\bm{u}_i, \bm{v}_{i})
		- \sum_i b_i(\bm{v}_{i}, p_i)
		&= \sum_i (\bm{g}, \bm{v}_i)_{\Omega_i},
		& \forall \bm{v} &\in \tilde V, \\
		\sum_i b_i(\bm{u}_i, w_i)
		&= \sum_i (f, w_i)_{\Omega_i},
		& \forall w &\in \tilde W.
	\end{align}
\end{subequations}
We note that this formulation allows for both essential and natural,
homogeneous boundary conditions on $\partial \Omega$. Essential
boundary conditions are incorporated in the definition of $\tilde{V}
\times \tilde{W}$. Extensions to other boundary conditions can readily
be made.

Let $V_i$ and $W_i$ be the respective restrictions of $\tilde{V}$ and
$\tilde{W}$ to subdomain $\Omega_i$. The composite spaces are defined as
\begin{align} \label{eq: composite spaces 2}
V := \bigoplus_i V_i, \quad W := \bigoplus_i W_i,
\end{align}
endowed with the norms $\| \bm{v} \|_V := \sum_i \| \bm{v}_{i} \|_{V_i}$ and
$\| w \|_W := \sum_i \| w_i \|_{W_i}$.

For each $V_i$, the trace operator $\Tr_i$ onto $\Gamma_i$ is then
defined such that the following alternative characterization of
$\tilde{V}$ holds:
\begin{align} \label{eq: characterization V_tilde}
\tilde{V} = \left\{ \bm{v} \in V:\ 
	\Tr_i \bm{v}_i = \Tr_j \bm{v}_j \ \text{on each } \Gamma_{ij}
	\right\}.
\end{align}
These continuities allow us to define a single-valued
global trace operator $\Tr \tilde{V}$ on $\Gamma$ and
introduce the interface space $\Lambda \subseteq \Tr \tilde{V}$  with a
suitable norm $\| \cdot \|_\Lambda$. Moreover, we define the subspace $V_i^0
:= \{\bm{v}_{i}^0 \in V_i :\ \Tr_i \bm{v}_{i}^0 = 0 \}$ for each $i \in
I_\Omega$.

We next construct the discretization of \eqref{eq: general form}. For
each $i \in I_\Omega$, let $\Omega_{i, h}$ be the tessellation of
$\Omega_i$ on which we define $V_{h, i} \times W_{h, i} \subset V_i
\times W_i$ as a stable mixed finite element pair for the
corresponding subproblem. Let $V_{h, i}^0 = V_{h, i} \cap V_i^0$, let
$\Lambda_h \subset \Lambda$ be the discretization of the interface
space, and let $S_{H, i}$ be the following null-space:
\begin{align} \label{eq: definition S_H}
	S_{H, i} :=
	\{ w_i \in W_i :\
	b(\bm{v}_{h, i}^0, w_i) = 0, \
	\forall \bm{v}_{i}^0 \in V_i^0 \}.
\end{align}
This allows us to define the discrete extension operator
$\myR_{h, i}: \Lambda \to V_{h, i}$ as the solution to the following problem for a given
$\lambda \in \Lambda$: 
Find $(\myR_{h, i} \lambda, p_{h, i}^\lambda, r_i) \in V_{h, i} \times W_{h, i} \times S_{H, i}$ such that
\begin{subequations} \label{eqs: R problem general}
	\begin{align}
		a_i(\myR_{h, i} \lambda, \bm{v}_{h, i}^0)
		- b_i(\bm{v}_{h, i}^0, p_{h, i}^\lambda)
		&= 0,
		& \forall \bm{v}_{h, i}^0 &\in V_{h, i}^0, \label{Rh-Vh} \\
		b_i(\myR_{h, i} \lambda, w_{h, i})
		- (r_i, w_{h, i})_{\Omega_i}
		&= 0,
		& \forall w_{h, i} &\in W_{h, i}, \label{eq: R problem general W_h} \\
		(p_{h, i}^\lambda, s_i)_{\Omega_i}
		&= 0,
		& \forall s_i &\in S_{H, i}, \label{eq: R problem general S_H} \\
		\Tr_i \myR_{h, i} \lambda
		&= \mathcal{Q}_{h, i} \lambda
		, & \text{ on } &\Gamma_i. \label{Rh-BC-general}
	\end{align}
\end{subequations}
Here, $\mathcal{Q}_{h, i}: \Lambda \to \Tr_i V_{h, i}$ is a chosen projection operator that maps interface data to the trace space of $V_{h, i}$. 


\begin{remark} \label{rem: introduction of S_H}
	The introduction of the space $S_{H, i}$ serves two purposes. First, it ensures that the subproblem is solvable by enforcing the Lagrange multiplier $r_i$ to act as compatible data. Second, the final equation ensures that the auxiliary variable $p_{h, i}^\lambda$ is uniquely defined, i.e. orthogonal to $S_{H, i}$.
\end{remark}

In turn, we define the composite spaces $V_h$ and $W_h$ as in \eqref{eq: definition V_h}:
\begin{align} \label{eq: V decomposition}
	V_h &:= \bigoplus_i V_{h, i}^0 \oplus \myR_{h, i} \Lambda_h
	= V_h^0 \oplus \myR_h \Lambda_h, &
	W_h &:= \bigoplus_i W_{h, i}.
\end{align}

We are now ready to set up the discretization of 
problem \eqref{eq: general form}: Find $(\bm{u}_h, p_h) \in V_h \times W_h$ such that
\begin{subequations} \label{eq: general form-h}
	\begin{align}
		\sum_i a_i(\bm{u}_{h,i}, \bm{v}_{h, i})
		- \sum_i b_i(\bm{v}_{h, i}, p_{h,i})
		&= \sum_i (\bm{g}, \bm{v}_{h,i})_{\Omega_i},
		& \forall \bm{v}_h &\in V_h, \\
		\sum_i b_i(\bm{u}_{h,i}, w_{h,i})
		&= \sum_i (f, w_{h,i})_{\Omega_i},
		& \forall w_h &\in W_h.
	\end{align}
\end{subequations}

Let $\Pi^V$, $\Pi^W$, and $\Pi^\Lambda$ denote interpolants onto
$V_h$, $W_h$, and $\Lambda_h$, respectively. We assume that these
interpolants have suitable approximation properties in the sense of
\eqref{eqs: approx props} and \eqref{eq: approx prop v}.

The main result concerning the analysis of the discrete problem \eqref{eq: general form-h}
is presented in the following theorem.

\begin{theorem} \label{thm: general error estimate}
Assume that:
\begin{enumerate}[label=A\arabic*., ref=A\arabic*]
\item The discrete spaces $V_h \times W_h$ are defined as in \eqref{eq: V decomposition}.

\item Problem \eqref{eqs: R problem general} has a unique solution and
  the resulting extension operator $\myR_h: \Lambda \to V_h$ is
  continuous, i.e. $\| \myR_h \lambda \|_V \lesssim \| \lambda \|_\Lambda$
  $\forall \, \lambda \in \Lambda$.

\item The four inequalities from Lemma~\ref{lem: Brezzi conditions} hold, i.e.,
  the bilinear forms $a(\cdot,\cdot)$ and $b(\cdot,\cdot)$ are continuous,
  $a(\cdot,\cdot)$ is coercive, and the finite element pair $V_h\times W_h$ is
  inf-sup stable.
  
\item The interpolant $\Pi^V$ is b-compatible in the sense of Lemma~\ref{lem: B-compatible}.
	\end{enumerate}
Then the discrete problem \eqref{eq: general form-h} admits a unique
solution that depends
continuously on the data $f$. Moreover,
the following \emph{a priori} error estimate holds:
	\begin{align} \label{eq: Error estimate general}
		\| \bm{u} - \bm{u}_h \|_V
		+
		\| p - p_h \|_W
		\lesssim
		\| \Pi^V \bm{u} - \bm{u} \|_V
		+
		\| \Pi^W p - p \|_W
		+
		\mathcal{E}_c,
	\end{align}
	with the consistency error defined as
\begin{align}\label{consist-error-general}
\mathcal{E}_c := \sup_{\bm{v}_h \in V_h}
\frac{a(\bm{u}, \bm{v}_h) - b(\bm{v}_h, p) - (\bm{g}, \bm{v}_h)_{\Omega}}{\| \bm{v}_h \|_V}.
\end{align}
\end{theorem}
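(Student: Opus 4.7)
The plan is to follow the structure of the error analysis in Section~\ref{sec:a_priori_analysis} almost verbatim, since the assumptions A1--A4 are precisely the abstract counterparts of the properties that drove the Darcy argument. First, existence, uniqueness, and stability of \eqref{eq: general form-h} follow from standard saddle point theory \cite{boffi2013mixed}: by assumption A3, the bilinear forms $a$ and $b$ are continuous, $a$ is coercive on the kernel of $b$, and $V_h \times W_h$ is inf-sup stable, which are exactly the hypotheses of Brezzi's theorem. Continuity of the right-hand side is immediate from the Cauchy-Schwarz inequality applied to $(\bm{g},\cdot)_{\Omega}$ and $(f,\cdot)_{\Omega}$.

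For the error estimate I would start by subtracting \eqref{eq: general form-h} from \eqref{eq: general form} tested against $\bv_h \in V_h \subset \tilde V$ (this inclusion is where I must be a bit careful: in the nonconforming case $V_h \not\subset \tilde V$, so I write the continuous equation only up to a consistency defect). Using the $L^2$-orthogonality of $\Pi^W$ and assumption A4 (b-compatibility of $\Pi^V$), the error equations take the form
\begin{subequations}
\begin{align}
a(\bu - \bu_h, \bv_h) - b(\bv_h, \Pi^W p - p_h) &= a(\bu, \bv_h) - b(\bv_h, p) - (\bm{g}, \bv_h)_\Omega, & \forall \bv_h &\in V_h, \\
b(\Pi^V \bu - \bu_h, w_h) &= 0, & \forall w_h &\in W_h,
\end{align}
\end{subequations}
where the consistency defect on the right is precisely $\|\bv_h\|_V \cdot \mathcal{E}_c$ at worst, by definition \eqref{consist-error-general}.

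Next I would choose test functions as in \eqref{eq: test functions}, namely $\bv_h := \Pi^V \bu - \bu_h - \delta \bv_h^p$ and $w_h := \Pi^W p - p_h$, where $\bv_h^p \in V_h$ is produced by the inf-sup condition in A3 to satisfy $b(\bv_h^p, \Pi^W p - p_h) = \|\Pi^W p - p_h\|_W^2$ and $\|\bv_h^p\|_V \lesssim \|\Pi^W p - p_h\|_W$, and $\delta > 0$ is to be fixed. Combining the two error equations, applying the kernel coercivity of $a$ from A3 to the term $a(\Pi^V\bu - \bu_h, \Pi^V\bu - \bu_h)$ (which is legitimate because $\Pi^V\bu - \bu_h$ lies in the kernel of $b$ by the second error equation), and absorbing the cross terms via Young's inequality with small parameters $\epsilon_1, \epsilon_2, \epsilon_3$, I obtain
\begin{equation*}
\|\Pi^V \bu - \bu_h\|_V^2 + \delta \|\Pi^W p - p_h\|_W^2 \lesssim \|\Pi^V \bu - \bu\|_V^2 + \delta^2 \|\Pi^W p - p_h\|_W^2 + \mathcal{E}_c^2.
\end{equation*}
Choosing $\delta$ sufficiently small to absorb the pressure term and then applying the triangle inequality on both $\bu$ and $p$ yields \eqref{eq: Error estimate general}.

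The proof is essentially a template transcription of Section~\ref{sub:error_estimates}; the main conceptual point to verify (rather than a serious obstacle) is that the inf-sup stability in A3 actually delivers $\bv_h^p \in V_h$, and that the kernel coercivity applies on the discrete kernel $\{\bv_h \in V_h : b(\bv_h, w_h) = 0 \ \forall w_h \in W_h\}$, which is the same space used for $\Pi^V\bu - \bu_h$. Both follow directly from the statement of Lemma~\ref{lem: Brezzi conditions} as transferred by A3. No new estimates on $\mathcal{E}_c$ are needed at this level of abstraction, since the abstract theorem bounds the error in terms of $\mathcal{E}_c$ and the approximation errors of the interpolants; application-specific bounds on $\mathcal{E}_c$ (as in Sections~\ref{ssub: err_projection_sharp} and \ref{ssub: err_projection_flat}) are left to the user of the framework.
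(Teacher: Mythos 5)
Your proposal is correct and follows essentially the same route as the paper, which itself simply refers back to the Darcy argument of Section~\ref{sub:error_estimates}: well-posedness via Brezzi theory under A3, error equations using the orthogonality of $\Pi^W$ and the b-compatibility of $\Pi^V$ (A4), the test functions of \eqref{eq: test functions} with $\bm{v}_h^p$ from the inf-sup condition, kernel coercivity, Young's inequality, and the triangle inequality. Your explicit remarks on where the nonconformity enters (the consistency defect) and on why kernel coercivity applies to $\Pi^V\bm{u}-\bm{u}_h$ are exactly the points the paper's template transcription relies on.
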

\begin{proof}
The well-posedness of the discrete problem follows from the standard saddle point theory
\cite{boffi2013mixed} in a way similar to Theorem~\ref{thm: well-posedness discrete}.
For the error estimate, we follow the same steps as in the beginning
of Section~\ref{sub:error_estimates}. In short, we form the error equations
as in \eqref{eqs: step1 errors}, choose test
functions $(\bm{v}_h, p_h) \in V_h \times W_h$ as in \eqref{eq: test
  functions}, and bound the different terms in the error equations as in
\eqref{eqs: component bounds} to obtain the error estimate \eqref{eq:
  intermediate}, which is exactly \eqref{eq: Error estimate general}.
\end{proof}

\begin{lemma} \label{lem: uniqueness mortar}
Assume, in addition to assumptions A1-4, that:
	\begin{enumerate}[label=A\arabic*., ref=A\arabic*, start=5]
		\item The mortar condition $\| \mu_h \|_\Gamma \lesssim \sum_i \| \mathcal{Q}_{h, i} \mu_h \|_{\Gamma_i}$ holds for all $\mu_h \in \Lambda_h$.
	\end{enumerate}
	Then a unique mortar variable $\lambda_h \in \Lambda_h$ exists such that $\bm{u}_h = \bm{u}_h^0 + \myR_h \lambda_h$ with $\bm{u}_h^0 \in V_h^0$.
\end{lemma}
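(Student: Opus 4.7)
The plan is to establish existence by appealing to the definition of $V_h$ and to establish uniqueness by exploiting assumption A5 together with the boundary condition \eqref{Rh-BC-general} built into the extension operator.

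First I would address existence. Since the discrete solution $\bm{u}_h$ belongs to $V_h$, the definition \eqref{eq: V decomposition} immediately gives the decomposition $V_h = V_h^0 + \myR_h\Lambda_h$, so there is at least one pair $(\bm{u}_h^0,\lambda_h) \in V_h^0 \times \Lambda_h$ with $\bm{u}_h = \bm{u}_h^0 + \myR_h\lambda_h$. The real content is to show that this pair is uniquely determined, i.e.\ that $V_h^0 \cap \myR_h\Lambda_h = \{0\}$, so the sum in \eqref{eq: V decomposition} is truly direct.

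For uniqueness, suppose we have two representations
\begin{equation*}
\bm{u}_h = \bm{u}_h^0 + \myR_h \lambda_h = \tilde{\bm{u}}_h^0 + \myR_h \tilde{\lambda}_h,
\end{equation*}
with $\bm{u}_h^0, \tilde{\bm{u}}_h^0 \in V_h^0$ and $\lambda_h, \tilde{\lambda}_h \in \Lambda_h$. Setting $\delta\lambda := \tilde{\lambda}_h - \lambda_h$ and $\delta\bm{u}^0 := \bm{u}_h^0 - \tilde{\bm{u}}_h^0 \in V_h^0$, we get $\myR_h \delta\lambda = \delta\bm{u}^0$. Applying $\Tr_i$ and using \eqref{Rh-BC-general} on the left and $\Tr_i \delta\bm{u}^0 = 0$ on the right yields $\mathcal{Q}_{h,i}\delta\lambda = 0$ on $\Gamma_i$ for every $i \in I_\Omega$.

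Finally I invoke the mortar condition A5 to conclude: $\|\delta\lambda\|_\Gamma \lesssim \sum_i \|\mathcal{Q}_{h,i}\delta\lambda\|_{\Gamma_i} = 0$, so $\delta\lambda = 0$, hence $\lambda_h = \tilde{\lambda}_h$ and consequently $\bm{u}_h^0 = \tilde{\bm{u}}_h^0$. I do not anticipate any serious obstacle here; the argument is essentially a one-line consequence of the fact that $\myR_h$ is a right inverse (up to $\mathcal{Q}_h$) of the trace operator and that A5 upgrades control of $\mathcal{Q}_h \mu_h$ on each $\Gamma_i$ to control of $\mu_h$ on $\Gamma$. The only subtlety worth flagging explicitly in the write-up is that without A5 the sum $V_h^0 + \myR_h\Lambda_h$ need not be direct, which is precisely why this hypothesis is added here even though it was not required for the existence of $\bm{u}_h$ itself.
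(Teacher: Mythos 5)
Your proof is correct and follows essentially the same route as the paper: the paper's proof of this lemma simply points back to Theorem~\ref{thm: well-posedness discrete}, where uniqueness of $\lambda_h$ is obtained from the mortar condition combined with $\Tr_i \myR_{h,i}\mu_h = \mathcal{Q}_{h,i}\mu_h$, exactly the two ingredients you use (the paper additionally derives the quantitative bound $\|\lambda_h\|_\Lambda \lesssim h^{-1/2}\|\bm{u}_h\|_V$ via a discrete trace inequality, but for uniqueness alone your direct argument on the difference $\delta\lambda$ is the same mechanism).
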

\begin{proof}
	See Theorem~\ref{thm: well-posedness discrete}.
\end{proof}

\begin{remark}
A condition similar to A5 may be necessary at an earlier stage,
e.g. in the definition of the discrete extension operator $\myR_h$
(Lemma~\ref{lem: sharp projection well-posed}), to establish the
approximation property of $\Pi^V$ (Lemma~\ref{lem: approximation prop
  Pi}), or to bound the consistency error (Section~\ref{ssub:
  err_projection_flat}). However, we emphasize that A5 is not
necessary in general to ensure uniqueness of the discrete solution
$(\bm{u}_h, p_h)$.
\end{remark}

\subsection{Coupling Stokes and Darcy flows}
\label{sub:coupling_stokes_flow}

As an example, we consider a coupled system of porous medium flow and
Stokes flow and follow all the steps from the
previous section to formulate and analyze the corresponding
flux-mortar MFE method. For this setting, let $\Omega_S$ and
$\Omega_D$ form a disjoint decomposition of $\Omega$ into regions of Stokes
and Darcy flow, respectively. For ease of presentation, we assume that
both $\Omega_S$ and $\Omega_D$ are simply connected domains. More general
configurations can also be treated, see, e.g. \cite{GirVasYot}.
Let the Stokes-Darcy interface be given by $\Gamma_{SD} := \partial \Omega_S
\cap \partial \Omega_D$. Let $\Gamma_S = \partial\Omega\cap\partial\Omega_S$
and $\Gamma_D = \partial\Omega\cap\partial\Omega_D$.
Denoting the restriction of a function to
$\Omega_S$ or $\Omega_D$ by a subscript $S$ or $D$, respectively, the
governing equations of the coupled Stokes-Darcy problem are
\cite{LSY}:
\begin{subequations} \label{eqs: Stokes-Darcy}
\begin{align}
  \sigma_S &:= \tilde\mu \epsilon(\bu_S) - p_S I, &
  & &
  \mbox{ in } &\Omega_S, \\
  - \nabla \cdot \sigma_S &= \bm{g}_S, &
  \nabla\cdot \bu_S &= f_S &
  \mbox{ in } &\Omega_S, \label{eq: mom balance SD}&
  \\
  \bu_D &= -K \nabla p_D, &
  \nabla\cdot \bu_D &= f_D &
  \mbox{ in } &\Omega_D, \label{eq: Darcy SD} \\
  \bm{\nu} \times (\sigma_S \bm{\nu}) &= - \bm{\nu} \times (\beta \bu_S), &
  \bm{\nu} \cdot \bu_S &= \bm{\nu} \cdot \bu_D &
  \mbox{ on } &\Gamma_{SD}, \label{eq: interface SD 1}\\
  \bm{\nu} \cdot (\sigma_S \bm{\nu}) &= - p_D &
  &
  & \mbox{ on } &\Gamma_{SD}, \label{eq: interface SD 2}\\
  \bm{u}_S &= 0
  \quad \mbox{ on } \Gamma_S, &
  p_D &= 0 &
  \mbox{ on } &\Gamma_D. \label{eq: BC SD}
\end{align}
\end{subequations}
Here, $\tilde\mu$ represents the viscosity, $\bm{g}_S$ is a body force,
$f$ is the mass source, 
$\beta$ is the Beavers-Joseph-Saffman (BJS) constant, $\bm{\nu}$
is the unit normal to $\Gamma_{SD}$ oriented outward with respect to
$\Omega_S$, $\bm{\nu} \times \bm{v}$ is the cross product if $n = 3$
and $\bm{\nu} \times \bm{v} = \bm{\nu}^\perp \cdot \bm{v}$ for $n = 2$
with $\perp$ denoting a rotation of $\pi/2$.  Moreover, $\varepsilon$
denotes the symmetric gradient, i.e. $\varepsilon(\bm{v}) := \frac12
(\nabla \bm{v} + (\nabla \bm{v})^T)$. 

Let us continue by defining the function spaces $\tilde{V} \times \tilde{W}$:
\begin{align*}
	\tilde{V} &:= \left\{ 
	\bm{v} \in H(\div, \Omega) :\ 
	\bm{v}_S \in (H^1(\Omega_S))^n, \quad 
	\bm{v}_S|_{\Gamma_S} = 0 
	\right\}, &
	\tilde{W} &:= L^2(\Omega).
\end{align*}
Next, we introduce index sets $I_S$ and $I_D$ to further decompose
$\Omega_S = \bigcup_{i \in I_S} \Omega_i$ and $\Omega_D = \bigcup_{i
  \in I_D} \Omega_i$ and we define $I_\Omega = I_S \cup I_D$. The
interfaces internal to $\Omega_S$ and $\Omega_D$ are denoted by
$\Gamma_{SS}$ and $\Gamma_{DD}$, respectively. Let $\Gamma = \Gamma_{DD} \cup
\Gamma_{SS} \cup \Gamma_{SD}$ and $\Gamma_i := \Gamma \cap \partial
\Omega_i$.

The variational formulation of problem \eqref{eqs: Stokes-Darcy}
obtains the form \eqref{eq: general form} by defining the bilinear
forms $a_i$ and $b_i$ per subdomain as follows \cite{LSY,GirVasYot}:
\begin{subequations}  \label{eqs: bilinear forms SD}
\begin{align}
a_i(\bm{u}_i, \bm{v}_{i}) &:=
(K^{-1} \bm{u}_i, \bm{v}_{i})_{\Omega_i}
, &
i &\in I_D, \\
a_i(\bm{u}_i, \bm{v}_{i}) &:=
(\tilde\mu \varepsilon(\bm{u}_i), \varepsilon(\bm{v}_{i}))_{\Omega_i}
+
(\beta \bm{\nu}_i \times \bm{u}_i, \bm{\nu}_i \times \bm{v}_{i})_{\Gamma_i \cap \Gamma_{SD}}
, &
i &\in I_S, \\
b_i(\bm{u}_i, w_i) &:= (\nabla \cdot \bm{u}_i, w_i)_{\Omega_i}, & i &\in I_\Omega.
\end{align}
\end{subequations}
It is shown in \cite{LSY,GirVasYot} that this variational formulation
has a unique solution.

Let $V_i$ and $W_i$ be the respective restrictions of $\tilde{V}$ and
$\tilde{W}$ to subdomain $\Omega_i$. The composite spaces $V$ and $W$
are defined as in \eqref{eq: composite spaces 2} and we associate the
following norms:
\begin{align*}	
	\| \bm{v} \|_V &:= 
	\sum_{i} \| \bm{v}_{i} \|_{V_i}
	= \sum_{i \in I_S} \| \bm{v}_{i} \|_{1, \Omega_i} +
	\sum_{i \in I_D} \| \bm{v}_{i} \|_{\div, \Omega_i}, \\
	\| w \|_W &:= 
	\sum_i \| w_i \|_{W_i} = 
	\sum_i \| w_i \|_{\Omega_i}.
\end{align*}

Next, we define the local trace operators $\Tr_i$. For $i \in I_D$,
let $\Tr_i$ be the normal trace operator on $\Gamma_i$, as in the
Darcy model problem of Section~\ref{sec:model_problem}. For $i \in
I_S$, on the other hand, let $\Tr_i \bm{v}_i$ be the trace of all
components of the vector function $\bm{v}_i$ onto $\Gamma_i$.
Note that this leads to a discrepancy on
$\Gamma_{SD}$ because $\Tr_i \bm{v}_i$ is scalar-valued for $i \in
I_D$ but vector-valued for $i \in I_S$. We therefore make a slight
alteration to the characterization \eqref{eq: characterization
  V_tilde}:
\begin{align*}
\tilde{V} = \{ \bm{v} \in V :\ 
	&\Tr_i \bm{v}_i = \bm{\nu} \cdot \Tr_j \bm{v}_j \ \text{on each } \Gamma_{ij} \text{ with } (i, j) \in I_D \times I_S , \\
	&\Tr_i \bm{v}_i = \Tr_j \bm{v}_j \ \text{on each } \Gamma_{ij} \not \subseteq \Gamma_{SD}
	\}.
\end{align*}
In turn, let the global trace operator on $\tilde{V}$, denoted by
$\Tr$, be the normal trace on $\Gamma_{DD}$ and the
full trace on $\Gamma_{SS} \cup \Gamma_{SD}$.
We then define the trace space
\begin{align*} 
	\Lambda := \left\{
	 \mu \in \Tr \tilde{V}: \mu|_{\Gamma_i} \in L^2(\Gamma_i) \ \ \forall \, i \in I_D
	\right\}.
\end{align*}
Let $\Lambda_i := \{\mu|_{\Gamma_i}, \mu \in \Lambda\}$, where the meaning
of the restriction on $\Gamma_i \cap
\Gamma_{SD}$ is either the full vector $\bmu|_{\Gamma_i \cap \Gamma_{SD}}$ for $i \in I_S$
or the normal
component $\bnu\cdot \bmu|_{\Gamma_i \cap \Gamma_{SD}}$ for $i \in I_D$.
Here, and in the following, we use a boldface $\bm{\mu}_i$ to denote
vector-valued components of $\mu$.
The space $\Lambda$ is 
endowed with the norm $\| \mu \|_\Lambda := \sum_{i} \| \mu_i \|_{\Lambda_i}$, with
\begin{align*}
  \|\mu_i \|_{\Lambda_i} := \| \mu_i \|_{\Gamma_i} \ \ \forall \, i \in I_D, \quad
  \|\bmu_i \|_{\Lambda_i} := \left\{ \begin{array}{ll}
    \| \bm{\mu}_i \|_{\frac12,\partial\Omega_i}, &
    \partial\Omega_i \cap \Gamma_S = \emptyset \\
    \|E_{i,0} \bm{\mu}_i\|_{\frac12,\partial\Omega_i}, & \partial\Omega_i \cap \Gamma_S \ne
    \emptyset  
  \end{array} \right. \forall i \in I_S,
  \end{align*}
where $E_{i,0}$ is the extension by zero to
$\partial\Omega_i$.

We end this subsection with a statement of a version of Korn's inequality
\cite[(1.8)]{Brenner}, which will be used to establish coercivity of the bilinear form
$a(\cdot,\cdot)$. Let $\mathcal{O} \subset \mathbb{R}^n$, $n = 2,3$ be a connected
bounded domain and let $\mathcal{G}$ with $|\mathcal{G}| > 0$ be a section of its
boundary. Then, for all
$\bv \in (H^1(\mathcal{O}))^n$,
\begin{equation}\label{Korn}
  |\bv|_{1,\mathcal{O}} \lesssim \Big( \|\epsilon(\bv)\|_{\mathcal{O}} +
  \sup_{\begin{array}{c}
      \bm{m} \in \RM(\mathcal{O}) \\
      \|\bm{m}\|_\mathcal{G} = 1, \, \int_\mathcal{G} \bm{m} \, ds = 0 \end{array}}
(\bv,\bm{m})_\mathcal{G} = 0 \Big),
\end{equation}
where $\RM(\mathcal{O})$ is the space of rigid body motions on $\mathcal{O}$. Combined
with Poincar\'e inequality \cite{quarteroni1999domain}, for all
$\bv \in (H^1(\mathcal{O}))^n$ with $\int_\mathcal{G} \bv \, ds = 0$,
$\|\bv\|_{\mathcal{O}} \lesssim |\bv|_{1,\mathcal{O}}$, \eqref{Korn} implies that for all
$\bv \in (H^1(\mathcal{O}))^n$ with $(\bv,\bm{m})_\mathcal{G} = 0
\ \forall\, \bm{m} \in \RM(\mathcal{O})$,
\begin{equation}\label{Korn-2}
    \|\bv\|_{1,\mathcal{O}} \lesssim \|\epsilon(\bv)\|_{\mathcal{O}}.
\end{equation}

\subsubsection{Discretization}

For each $i \in I_\Omega$, let $\Omega_{h,i}$ be a shape-regular mesh, allowing for
non-matching grids along the interfaces. We choose a finite element pair $V_{h, i}
\times W_{h, i} \subset V_i \times W_i$ such that it is stable for the Darcy subproblem if $i
\in I_D$ and for the Stokes subproblem if $i \in I_S$. Stable MFE pairs for
the Stokes subproblems, see e.g. \cite[Chapter 8]{boffi2013mixed},
include the Taylor-Hood pair, the MINI mixed finite element, and the
Bernardi-Raugel pair. Note that the essential boundary condition
$\bu_S = 0$ on $\Gamma_S$ is built in $V_{h, i}$.

We next define the discrete flux space $\Lambda_h \subset \Lambda$. On
$\Gamma_{DD}$, the discrete space $\Lambda_{h,D} \subset
L^2(\Gamma_{DD})$ is defined interface by interface as described in
Section~\ref{sec:discretization}. On $\Gamma_{SS} \cup \Gamma_{SD}$ we
consider a globally conforming shape-regular mesh. Such mesh can be
obtained as the trace of a mesh $\tilde \Omega_{h,S}$ on $\Omega_S$
that is aligned with the domain decomposition. Let $\tilde V_{h,S}
\subset \tilde V|_{\Omega_S}$ be a conforming Lagrange finite element
space on $\tilde \Omega_{h,S}$. We define the discrete flux space on
$\Gamma_{SS} \cup \Gamma_{SD}$ as $\Lambda_{h,S} := \Tr \tilde
V_{h,S}$.  To ensure the mortar condition A5, the space
$\Lambda_{h,S}$ must be defined on a sufficiently coarse mortar
grid, see \cite{GirVasYot} for specific examples.

Due to the boundary condition \eqref{eq: BC SD}, we redefine $I_{int}
:= I_S \cup \{ i \in I_D :\ \partial \Omega_i \subseteq \Gamma \}$. In
turn, the space $S_H$, defined by \eqref{eq: definition S_H}, is given
explicitly by \eqref{eq: definition S_Hi darcy}. Let $W_{h,i}^0 :=
W_{h, i} \cap S_{H, i}^\perp$.  We emphasize that the following
inf-sup condition holds for all $i \in I_\Omega$:
\begin{align} \label{ineq: b_i infsup SD}
  \forall w_{h, i}^0 \in W_{h, i}^0, \ \exists \, 0 \ne \bm{v}_{h, i}^0 \in V_{h, i}^0
  \text{ such that }
b_i(\bm{v}_{h, i}^0, w_{h, i}^0) \gtrsim \| \bm{v}_{h, i}^0 \|_{V_i} \| w_{h, i}^0 \|_{W_i}.
\end{align}

We continue with the definition of the operator $\mathcal{Q}_{h, i} :
\Lambda \to \Tr_i V_{h, i}$. For $i \in I_D$, recall that the space
$\Lambda$ has a different number of components on $\Gamma_{DD}$ and
$\Gamma_{SD}$.  On $\Gamma_i \cap \Gamma_{SD}$, let $\mathcal{Q}_{h,
  i} \bm{\lambda}$ be the $L^2$-projection of $\bm{\nu} \cdot \bm{\lambda}$
onto the normal trace space $(\Tr_i V_{h, i})|_{\Gamma_i \cap \Gamma_{SD}}$.
On $\Gamma_i \cap \Gamma_{DD}$, let
$\mathcal{Q}_{h, i}$ be the $L^2$-projection
$\mathcal{Q}_{h,i}^\flat$
from Section~\ref{sub: disc_projection_from_the_mortar_space}.

Now consider $i \in I_S$. The operator $\mathcal{Q}_{h, i}$ needs to
satisfy
\begin{equation}\label{prop-Qh}
(\bnu_i\cdot (\mathcal{Q}_{h, i} \bm{\lambda} - \bm{\lambda}),1)_{\Gamma_i} = 0,  \quad i \in I_S,
\end{equation}
which is needed for inf-sup stability, cf. Lemma~\ref{lem: A3 SD}, and
b-compatibility of the interpolant $\Pi^V$, cf. Lemma~\ref{lem: A4 SD}.
The $L^2$-projection onto $\Tr_i V_{h, i}$ does not satisfy
\eqref{prop-Qh}, since the space $\Tr_i V_{h, i}$ is continuous on
$\Gamma_i$, but the normal vector $\bnu_i$ is discontinuous at the
corners of the subdomains. We therefore need a different construction.
Let $\myI_{\Gamma_i}: \Lambda_i \to \Tr_i V_{h, i}$ be a suitable interpolant or
projection with optimal approximation properties. Specific choices of $\myI_{\Gamma_i}$
will be discussed below. Since
$\myI_{\Gamma_i}$ may not satisfy \eqref{prop-Qh}, we correct it
on each flat face $F$ of $\Gamma_{i}$. We assume that,
given $\bm{\lambda} \in \Lambda_i$, there exists
$\bm{c}^F_{h,i} \in \Tr_i V_{h, i}|_F \cap
(H^1_0(F))^n$ such that
\begin{equation}\label{Stokes-mortar}
  (\bm{c}^F_{h,i},\bchi_{h,i})_F
  = (\bm{\lambda} - \myI_{\Gamma_i} \bl,\bchi_{h,i})_F \ \ \forall \bchi_{h,i} \in V^F_{h,i},
  \quad \|\bm{c}^F_{h,i}\|_{\frac12,F} \lesssim \|\bl - \myI_{\Gamma_i}\bl\|_{\frac12,F},
\end{equation}
where $V^F_{h,i}$ is a suitably defined finite element space on $F$ such that
$\bnu_i|_F \in V^F_{h,i}$.
We refer to \cite[Appendix]{GirVasYot} for examples of spaces and
constructions of $\bm{c}^F_{h,i}$. In particular, in two dimensions, assuming
that $\bl \in \mathcal{C}^0(\Gamma_i)$, we can take $\myI_{\Gamma_i}\bl$ to be the Lagrange
interpolant and use the constructions from \cite[Section~7.1]{GirVasYot}. Alternatively,
in both two and three dimensions we can take $\myI_{\Gamma_i}$ to be the $L^2$-projection
onto $\Tr_i V_{h, i}$ and use the construction from \cite[Section~7.2]{GirVasYot}.
We then define
$$
\mathcal{Q}_{h, i} \bl := \myI_{\Gamma_i} \bl + \sum_{F \subset \Gamma_i} \bm{c}^F_{h,i},
$$
which satisfies for each face $F$, 
\begin{equation}\label{Qh-orth}
  (\mathcal{Q}_{h, i} \bl - \bl, \bchi_{h,i})_F = 0 \quad \forall \bchi_{h,i} \in V^F_{h,i}.
\end{equation}
Since $\bnu_i|_F \in V^F_{h,i}$, then \eqref{prop-Qh} holds.
A scaling argument similar to the one in \cite[Lemma~5.1]{GirVasYot}
shows that that $\mathcal{Q}_{h, i}$ is stable and has optimal
approximation properties in $\|\cdot\|_{\Lambda_i}$. We further note that
the approximation property of the space $V^\Gamma_{h,i}$ on $\Gamma_i$,
$V^\Gamma_{h,i}|_F := V^F_{h,i}$, does not affect
the approximation property of $\mathcal{Q}_{h, i}$, but it affects the
consistency error $\mathcal{E}_c$, cf. Lemma~\ref{lem: consistency error SD}.

We now have all the ingredients to set up problem \eqref{eqs: R
  problem general} and therewith define the extension operator
$\myR_{h, i}$. In turn, the discrete spaces $V_h
\times W_h$ are defined as in \eqref{eq: V decomposition}.
The discrete Stokes-Darcy problem is then defined by \eqref{eq:
  general form-h}, posed on $V_h \times W_h$, with the bilinear forms
from \eqref{eqs: bilinear forms SD}.

\begin{remark}
The choice of a full vector $\bm{\lambda}_h$ on
$\Gamma_{SD}$ is different from previously developed pressure-mortar
methods for the Stokes-Darcy problem
\cite{LSY,GirVasYot,galvis-sarkis,VasWangYot}, where $\lambda_h$ is a
scalar on $\Gamma_{SD}$ modeling $\bm{\nu} \cdot (\sigma_S \bm{\nu}) =
- p_D$ and used to impose weakly $\bm{\nu} \cdot \bu_S = \bm{\nu}
\cdot \bu_D$. In a domain decomposition implementation, the BJS
boundary condition is incorporated into the subdomain solves
\cite{GirVasYot,VasWangYot}. In contrast, in our method, the BJS term
$(\beta \bm{\nu} \times \bm{u}_i, \bm{\nu} \times \bm{v}_{i})_{\Gamma_i \cap \Gamma_{SD}}$
is eliminated from the subdomain solves, since $\bm{v}_{h, i}^0 = 0$ on $\partial\Omega_i$
in \eqref{Rh-Vh}. The Stokes subdomain problems are of Dirichlet type
with data $\mathcal{Q}_{h, i} \bmu_h$. In turn, the BJS boundary condition
is incorporated into the coupled system \eqref{eq: general form-h}
via the BJS term in the bilinear form $a(\cdot,\cdot)$ in \eqref{eqs: bilinear forms SD}. In
the domain decomposition implementation, the BJS boundary condition
is incorporated into the interface operator, see
Section~\ref{ssub:reduction_to_an_interface_problem_SD}.
\end{remark}

\subsubsection{Interpolants}

We next define appropriate interpolants in the discrete spaces. We
define $\Pi^W$ as the $L^2$-projection onto $W_h$, $\Pi^\Lambda$ as the
$L^2$-projection onto $\Lambda_h$, and $\Pi^{V^\Gamma}_i$ as the
$L^2$-projection onto $V^\Gamma_{h,i}$.

The interpolant $\Pi^V$ is constructed according to the
following steps. First, for $i \in I_D$, let $\Pi_i^V$ be the
b-compatible interpolant associated with $V_{h, i}$ introduced in
Section~\ref{sub:interpolation_operators} and satisfying properties
\eqref{eq: commutativity}--\eqref{eq: Pi-trace}. Note that \eqref{eq:
  Pi-trace} implies $\Tr_i \Pi_i^V = \mathcal{Q}_{h, i} \Tr_i$ on $\Gamma_i$,
which is used in the construction of the composite
interpolant $\Pi^V$. However, canonical b-compatible interpolants for
Stokes finite elements do not typically satisfy this property. For
this reason, in the Stokes region we define $\Pi_i^V$ as a suitable
Stokes elliptic projection. More precisely, for $i \in I_S$,
given $\bm{u}_i$, we consider the discrete Stokes problem:
Find $(\Pi_i^V \bm{u}_i, p_{h, i}^u) \in V_{h, i} \times W^0_{h, i}$
such that
\begin{subequations} \label{Pi-Stokes}
\begin{align}
(\nabla(\Pi_i^V \bm{u}_i), \nabla\bm{v}_{h,i}^0)_{\Omega_i}
- (\nabla\cdot\bm{v}_{h, i}^0, p_{h,i}^u)_{\Omega_i}
&= (\nabla\bm{u}_i, \nabla\bm{v}_{h,i}^0)_{\Omega_i},
& \forall \bm{v}_{h, i}^0 &\in V_{h, i}^0, \\
(\nabla\cdot\Pi_i^V \bm{u}_i, w_{h, i})_{\Omega_i}
&= (\nabla\cdot\bm{u}_i, w_{h, i})_{\Omega_i},
& \forall w_{h, i} &\in W_{h, i}, \label{Pi-Stokes W_h} \\
\Pi_i^V \bm{u}_i
&= \mathcal{Q}_{h, i} \Tr_i \bm{u}_i,
& \text{ on } &\Gamma_i. \label{Pi-Stokes-Tr}
\end{align}
\end{subequations}
The well-posedness of the above problem and optimal approximation properties of $\Pi_i^V$
follows from standard Stokes finite element analysis \cite{boffi2013mixed}.

Let $\lambda = \Tr \bm{u}$. Note that, by construction, we have $\Tr_i
\Pi_i^V = \mathcal{Q}_{h, i} \Tr_i$ on $\Gamma_i$ for both
$i \in I_D$ and $i \in I_S$. Therefore $\Pi_i^V
(\bm{u}_i - \myR_{h,i} \lambda) \in V_{h, i}^0$.
Using this observation, the interpolant $\Pi^V$ onto $V_h$ is defined similarly to
\eqref{Pi-flat}:
\begin{align*}
	\Pi^V \bm{u} := \myR_h \Pi^\Lambda \lambda
        + \bigoplus_i \Pi_i^V (\bm{u}_i - \myR_{h,i} \lambda)
= \myR_h (\Pi^\Lambda\lambda - \lambda) + \bigoplus_i \Pi_i^V \bm{u}_i.
\end{align*}
The continuity of $\myR_h$, which will be established in Lemma~\ref{lem: A2 SD}, implies
the following approximation property of $\Pi^V$:
\begin{align} \label{eq: approximation PiV SD}
	\| \bm{u} - \Pi^V \bm{u} \|_V &\lesssim
	\sum_{i \in I_S} \| \bm{u} - \Pi_i^V \bm{u} \|_{1, \Omega_i}
		+ \sum_{i \in I_D} \| \bm{u} - \Pi_i^V \bm{u} \|_{\div, \Omega_i}
		+ \| \lambda - \Pi^\Lambda \lambda \|_\Lambda.
\end{align}

\subsubsection{Stability and error analysis}

\begin{theorem}
  The discrete Stokes-Darcy problem \eqref{eq: general form-h}
  has a unique solution $(\bm{u}_h,
p_h) \in V_h \times W_h$. If A5 holds, then there is a 
unique mortar solution $\lambda_h \in
\Lambda_h$. Moreover, the following error estimate holds with respect
to the solution $(\bm{u}, p)$ of \eqref{eq: general form}:
\begin{align*}
  & \| \bm{u} - \bm{u}_h \|_V + \| p - p_h \|_W \\
& \quad  \lesssim
\sum_{i \in I_S} \| \bm{u} - \Pi_i^V \bm{u} \|_{1, \Omega_i}
+ \sum_{i \in I_D} \| \bm{u} - \Pi_i^V \bm{u} \|_{\div, \Omega_i}
+ \| \lambda - \Pi^\Lambda \lambda \|_\Lambda
+ \sum_{i \in I_\Omega} \| p - \Pi_i^W p \|_{W_i} \\
& \qquad 
+ h^{-1/2} \sum_{i \in I_D}
\| p_D - \mathcal{Q}_{h, i} p_D \|_{\Gamma_i} + 
\sum_{i \in I_S} 
\|  \sigma_S \bm{\nu} - \Pi^{V^\Gamma}_i (\sigma_S \bm{\nu}) \|_{\Gamma_i}. 
\end{align*}
\end{theorem}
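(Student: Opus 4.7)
The plan is to apply the abstract Theorem~\ref{thm: general error estimate} to the coupled Stokes--Darcy setting by verifying assumptions A1--A4, to invoke Lemma~\ref{lem: uniqueness mortar} for uniqueness of $\lambda_h$ under A5, and finally to bound the consistency error $\mathcal{E}_c$ specifically for this problem. Assumption A1 holds by construction. For A2, the well-posedness and continuity of $\myR_{h,i}$ follows by arguing as in Lemma~\ref{lem: R_h is well-posed}: the kernel argument uses the local inf-sup stability \eqref{ineq: b_i infsup SD} on $W_{h,i}^0$ together with \eqref{prop-Qh} to control the mean of $\nabla\cdot\myR_{h,i}\lambda$, while the stability bound uses the discrete extension from $\Tr_i V_{h,i}$ into $V_{h,i}$ that is classical both for the Darcy (as in Section~\ref{sub:discrete_extension}) and for the Stokes finite element pairs.

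For A3, continuity of $a$ and $b$ follow from Cauchy--Schwarz, the boundedness of $K^{-1}$, $\tilde\mu$, and $\beta$, and the continuity of the trace into $L^2(\Gamma_{SD})$ for the BJS term. Coercivity of $a$ on the divergence-free subspace follows on each Darcy subdomain as before, and on each Stokes subdomain from Korn's inequality \eqref{Korn}--\eqref{Korn-2}, noting that either $\Gamma_S \cap \partial\Omega_i \neq \emptyset$ (in which case Korn with boundary condition applies), or the BJS term on $\Gamma_i \cap \Gamma_{SD}$ controls the rigid body motion contribution as in \cite{LSY,GirVasYot}. The global inf-sup condition of $b$ over $V_h\times W_h$ follows by a construction analogous to that in Lemma~\ref{lem: Brezzi conditions}: build a smooth velocity $\bm{v}^w$ solving a global divergence problem in $\tilde V$, project its trace into $\Lambda_h$ to obtain $\mu_h$ so that \eqref{prop-Qh} preserves the mean values of $\nabla\cdot\bm{v}^w$ on interior subdomains, and correct on each subdomain with local inf-sup stable $\bm{v}_{h,i}^0 \in V_{h,i}^0$ from \eqref{ineq: b_i infsup SD}.

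For A4, $b$-compatibility of $\Pi^V$ follows directly: in Darcy subdomains by \eqref{eq: commutativity}, and in Stokes subdomains by \eqref{Pi-Stokes W_h}, while the $\myR_h(\Pi^\Lambda\lambda - \lambda)$ term is divergence-free on interior subdomains thanks to \eqref{Rh-stab} and \eqref{prop-Qh}. Assembling these verifications, Theorem~\ref{thm: general error estimate} yields well-posedness and the estimate \eqref{eq: Error estimate general}, and Lemma~\ref{lem: uniqueness mortar} with A5 gives uniqueness of $\lambda_h$.

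The main remaining work, and the technical crux, is controlling the consistency error $\mathcal{E}_c$ in \eqref{consist-error-general}. I would integrate by parts subdomain by subdomain using the strong equations \eqref{eq: mom balance SD}, \eqref{eq: Darcy SD}, and the interface conditions \eqref{eq: interface SD 1}--\eqref{eq: interface SD 2}, which collapses the bulk contributions and leaves interface terms of the form $\sum_{i\in I_D}(p_D,\bnu_i\cdot\tilde\bm{v}_{h,i})_{\Gamma_i}$ and $\sum_{i\in I_S}(\sigma_S\bnu_i,\tilde\bm{v}_{h,i})_{\Gamma_i}$, with the BJS term on $\Gamma_{SD}$ matching between the subdomain and the bilinear form $a$. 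On the Darcy interfaces, decomposing $\tilde\bm{v}_{h,i}=\tilde\bm{v}_{h,i}^0+\myR_{h,i}\tilde\mu_h$ and applying the argument from Section~\ref{ssub: err_projection_flat} together with the mortar condition and a discrete trace inequality gives the $h^{-1/2}\|p_D-\mathcal{Q}_{h,i}p_D\|_{\Gamma_i}$ term. On the Stokes interfaces, since $\tilde\bm{v}_{h,i}|_{\Gamma_i}\in V^\Gamma_{h,i}$ face-wise, the orthogonality \eqref{Qh-orth} lets me subtract $\Pi^{V^\Gamma}_i(\sigma_S\bnu)$ from $\sigma_S\bnu$, producing the $\|\sigma_S\bnu-\Pi^{V^\Gamma}_i(\sigma_S\bnu)\|_{\Gamma_i}$ term after Cauchy--Schwarz and a trace inequality (the standard $H^{1/2}$-continuity of the trace bounds $\|\tilde\bm{v}_{h,i}\|_{\Gamma_i}$ by $\|\tilde\bm{v}_{h,i}\|_{1,\Omega_i}$, which is absorbed into $\|\tilde\bm{v}_h\|_V$). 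Combining this bound on $\mathcal{E}_c$ with the approximation property \eqref{eq: approximation PiV SD} of $\Pi^V$ and standard $L^2$-approximation of $\Pi^W$ in \eqref{eq: Error estimate general} completes the proof.
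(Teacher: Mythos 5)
Your overall strategy coincides with the paper's: verify A1--A4, invoke Theorem~\ref{thm: general error estimate}, use Lemma~\ref{lem: uniqueness mortar} under A5, and then bound $\mathcal{E}_c$ by integrating by parts and treating the $\Gamma_{DD}$, $\Gamma_{SS}$, and $\Gamma_{SD}$ terms separately. Your treatment of A1, A2, A4, and of the consistency error is essentially the paper's argument. However, there is a genuine gap in your verification of the coercivity part of A3. You claim that on each Stokes subdomain Korn's inequality applies because either $\partial\Omega_i$ meets $\Gamma_S$ or the BJS term controls the rigid body motions. Neither case covers a Stokes subdomain interior to $\Omega_S$ (i.e.\ with $\partial\Omega_i \subseteq \Gamma_{SS}$), where the velocity is unconstrained on the whole boundary and $\epsilon(\cdot)$ annihilates all rigid body motions; and even on $\Gamma_{SD}$ the BJS term only sees $\bnu\times\bu$, so a rigid translation normal to a flat face is invisible to it. The paper explicitly observes that Korn's inequality cannot be applied locally and instead runs a global argument: it introduces the conforming auxiliary space $\tilde V_{h,S}$ with the discrete extension $\mathcal{E}_{h,i}$ of \eqref{Eh}, solves the local problems \eqref{uh-tilde}, uses \eqref{Qh-orth} to show $\bu_{h,i}-\tilde\bu_{h,i}$ is $L^2(\Gamma_i)$-orthogonal to $\RM(\Omega_i)$ so that \eqref{Korn-2} applies to the difference, and then applies Korn globally on $\Omega_S$ (which does touch $\Gamma_S$) to the conforming piece $\tilde\bu_{h,S}$. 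Without some such mechanism your coercivity claim is simply false for interior Stokes subdomains.

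A second, smaller gap is in your inf-sup construction for A3 on the Stokes side. You propose to ``project the trace of $\bm{v}^w$ into $\Lambda_h$'' and rely on \eqref{prop-Qh} to preserve subdomain mean fluxes, but \eqref{prop-Qh} only relates $\mathcal{Q}_{h,i}\bmu$ to a given $\bmu$; it does not guarantee that the projected mortar itself satisfies $(\bnu_i\cdot\bmu_h,1)_{\Gamma_i}=(w_h,1)_{\Omega_i}$. Because $\Lambda_{h,S}$ is globally continuous, the interface-by-interface mean-value construction used in $\Omega_D$ is unavailable, which is precisely why the paper obtains $\bmu_h$ as the trace of a discrete Stokes projection $\tilde\bu^w_{h,S}$ computed with the coarse pressure space $W_{H,S}$ of subdomain-wise constants; equation \eqref{global-Stokes-WH} is what delivers the required subdomain flux identity. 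You would need to supply an equivalent device for your argument to close.
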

\begin{proof}
The proof is based on Theorem~\ref{thm: general error estimate}.  We
consider its assumptions A1-4. A1 is satisfied by construction, A2 is
shown in Lemma~\ref{lem: A2 SD}, A3 in Lemma~\ref{lem: A3 SD}, and A4
in Lemma~\ref{lem: A4 SD}. We then invoke Theorem~\ref{thm: general
  error estimate} to obtain existence and uniqueness of $(\bm{u}_h, p_h)$. The
uniqueness of $\lambda_h$ under A5 follows from Lemma~\ref{lem:
  uniqueness mortar}. Finally, we obtain the error estimate
by combining \eqref{eq: Error estimate general}, the
approximation property \eqref{eq: approximation PiV SD}, and the
estimate on the consistency error from Lemma~\ref{lem: consistency
  error SD}.
\end{proof}

\begin{lemma}[A2] \label{lem: A2 SD}
Problem \eqref{eqs: R problem general} has a unique solution and the resulting extension operator $\myR_h: \Lambda \to V_h$ is continuous, i.e. $\| \myR_h \lambda \|_V \lesssim \| \lambda \|_\Lambda$ $\forall \, \lambda \in \Lambda$.
\end{lemma}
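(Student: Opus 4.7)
The problem \eqref{eqs: R problem general} decouples over subdomains, so I argue $\Omega_i$ by $\Omega_i$. For Darcy subdomains $i \in I_D$ the system has exactly the form \eqref{eq: R_h problem} with boundary datum $\mathcal{Q}_{h,i}\lambda \in V_{h,i}^\Gamma$ (either $\mathcal{Q}_{h,i}^\flat\lambda$ on $\Gamma_i\cap\Gamma_{DD}$ or the $L^2$-projection of $\bnu\cdot\lambda$ on $\Gamma_i\cap\Gamma_{SD}$). Lemma~\ref{lem: R_h is well-posed} then applies verbatim, giving unique solvability and $\|\myR_{h,i}\lambda\|_{\div,\Omega_i} \lesssim \|\lambda\|_{\Lambda_i}$.

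For $i \in I_S \subset I_{int}$, so that $S_{H,i} = \mathbb{R}$, the system \eqref{eqs: R problem general} is a discrete Stokes problem with vector-valued Dirichlet data $\mathcal{Q}_{h,i}\lambda$ on $\Gamma_i$ and the essential condition $\bu=0$ on $\partial\Omega_i\cap\Gamma_S$ already encoded in $V_{h,i}$. I first identify $r_i$: testing \eqref{eq: R problem general W_h} with $w_{h,i}=1 \in S_{H,i}$, applying the divergence theorem, and invoking the key orthogonality \eqref{prop-Qh} of $\mathcal{Q}_{h,i}$ gives
\begin{align*}
r_i\,|\Omega_i| = (\bnu_i\cdot\mathcal{Q}_{h,i}\lambda,1)_{\Gamma_i} = (\bnu_i\cdot\lambda,1)_{\Gamma_i},
\end{align*}
hence $|r_i|\,|\Omega_i|^{1/2} \lesssim \|\lambda\|_{\Lambda_i}$. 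I then construct a stable discrete $H^1$-lifting $\tilde\bu_{h,i} \in V_{h,i}$ of the boundary data, with $\Tr_i\tilde\bu_{h,i} = \mathcal{Q}_{h,i}\lambda$, $\tilde\bu_{h,i}=0$ on $\partial\Omega_i\cap\Gamma_S$, and $\|\tilde\bu_{h,i}\|_{1,\Omega_i} \lesssim \|\lambda\|_{\Lambda_i}$. This can be produced by a Scott--Zhang-type discrete extension on the conforming Stokes mesh $\Omega_{h,i}$, combined with the $H^{1/2}$-stability of $\mathcal{Q}_{h,i}$ that follows from its construction through $\myI_{\Gamma_i}$ and the correctors $\bm{c}^F_{h,i}$ of \eqref{Stokes-mortar}.

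Setting $\bu_{h,i}^0 := \myR_{h,i}\lambda - \tilde\bu_{h,i} \in V_{h,i}^0$ and restricting the pressure to $W_{h,i}^0 = W_{h,i}\cap S_{H,i}^\perp$ (as enforced by \eqref{eq: R problem general S_H}) recasts \eqref{eqs: R problem general} as a homogeneous discrete Stokes saddle-point problem whose right-hand side is compatible: testing the modified divergence equation with $w_{h,i}=1$ gives zero precisely by the choice of $r_i$. For this system, continuity of $a_i$ and $b_i$ is immediate, the discrete inf-sup is the assumed \eqref{ineq: b_i infsup SD}, and coercivity of $a_i$ on $V_{h,i}^0$ follows from the Korn--Poincar\'e inequality \eqref{Korn-2}, since $V_{h,i}^0 \subset (H^1_0(\Omega_i))^n$ and the BJS contribution is non-negative. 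The standard saddle-point theorem \cite{boffi2013mixed} then yields unique solvability together with
\begin{align*}
\|\bu_{h,i}^0\|_{1,\Omega_i} + \|p_{h,i}^\lambda\|_{\Omega_i} \lesssim \|\tilde\bu_{h,i}\|_{1,\Omega_i} + |r_i|\,|\Omega_i|^{1/2} \lesssim \|\lambda\|_{\Lambda_i},
\end{align*}
and the triangle inequality gives $\|\myR_{h,i}\lambda\|_{1,\Omega_i} \lesssim \|\lambda\|_{\Lambda_i}$. Summing over subdomains together with the Darcy case completes the proof. The main obstacle is verifying the $H^{1/2}$-stability of the composite projection $\mathcal{Q}_{h,i}$ in the Stokes case, since the face correctors $\bm{c}^F_{h,i}$ are defined only implicitly through \eqref{Stokes-mortar}; this bound (carried out by a scaling argument as in \cite{GirVasYot}) is what allows the discrete lifting $\tilde\bu_{h,i}$ to inherit an $\|\lambda\|_{\Lambda_i}$ estimate with constant independent of $h$.
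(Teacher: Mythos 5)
Your proof is correct and follows essentially the same route as the paper: treat $i \in I_D$ by citing Lemma~\ref{lem: R_h is well-posed}, and for $i \in I_S$ subtract a stable discrete lifting of $\mathcal{Q}_{h,i}\bm\lambda$ (the paper's $\myR_{h,i}^\star$ from \cite[Theorem~4.1.3]{quarteroni1999domain}), relying on Korn's inequality \eqref{Korn-2} for coercivity on $V_{h,i}^0$, the local inf-sup \eqref{ineq: b_i infsup SD}, and the $\|\cdot\|_{\Lambda_i}$-stability of $\mathcal{Q}_{h,i}$ established by the scaling argument of \cite{GirVasYot}. The only cosmetic difference is that you package the final estimate as an application of the abstract saddle-point theorem on the homogenized system in $V_{h,i}^0\times W_{h,i}^0$ (after explicitly identifying $r_i$ for compatibility), whereas the paper carries out the equivalent energy argument by hand with the test function $(\myR_{h,i}-\myR_{h,i}^\star)\bm\lambda$ and Young's inequality.
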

\begin{proof}
For $i \in I_D$, the unique solvability of \eqref{eqs: R problem
  general} and the continuity of $\myR_{h, i}$ hold by Lemma~\ref{lem:
  R_h is well-posed}.  For $i \in I_S$,
we consider uniqueness by setting $\lambda = 0$.
Setting $w_{h, i} = 1$ in \eqref{eq: R problem general W_h}
and using the divergence theorem and \eqref{Rh-BC-general}, we 
obtain $r_i = 0$. Next, setting the test functions in
\eqref{Rh-Vh}--\eqref{eq: R problem general W_h}
to $(\myR_{h, i} \lambda, p_{h, i}^\lambda)$ and summing the 
equations gives us $\myR_{h, i} \lambda = 0$, using Korn's inequality
\eqref{Korn-2}. Moreover, we have $p_{h,i}^\lambda \perp S_{H, i}$
from \eqref{eq: R problem general S_H},
so we use the inf-sup condition
\eqref{ineq: b_i infsup SD}  and \eqref{Rh-Vh} to derive that $p_{h, i}^\lambda = 0$.

It remains to show continuity for $i \in I_S$. The first step is to
obtain a bound on $p_{h, i}^\lambda$. Since $p_{h, i}^\lambda \perp S_{H, i}$,
we use $\bm{v}_{h, i}^0$ from the inf-sup condition \eqref{ineq: b_i
  infsup SD} as a test function and use the continuity of $a_i(\cdot,\cdot)$ to
obtain
\begin{align*}
		\| \bm{v}_{h, i}^0 \|_{V_i} \| p_{h, i}^\lambda \|_{W_i}
		\lesssim 
		b_i(\bm{v}_{h, i}^0, p_{h, i}^\lambda) 
		= a_i(\myR_{h, i} \bm\lambda, \bm{v}_{h, i}^0)
		\lesssim
		\| \myR_{h, i} \bm\lambda \|_{V_i}
		\| \bm{v}_{h, i}^0 \|_{V_i}.
\end{align*}
Thus, $\| p_{h, i}^\lambda \|_{W_i} \lesssim \| \myR_{h, i} \bm\lambda \|_{V_i}$.

Next, let $\myR_{h, i}^\star \bm{\lambda} \in V_{h, i}$ be a continuous
discrete extension operator \cite[Theorem~4.1.3]{quarteroni1999domain}
satisfying $\Tr_i \myR_{h, i}^\star
\bm{\lambda} = \mathcal{Q}_{h, i} \bm{\lambda}$ on $\Gamma_i$ and
\begin{align*}
\| \myR_{h, i}^\star \bm{\lambda} \|_{V_i} \lesssim 
\| \mathcal{Q}_{h, i} \bm{\lambda} \|_{\Lambda_i}
\lesssim \| \bm{\lambda} \|_{\Lambda_i}.
\end{align*}
We take as test functions in \eqref{eqs: R problem general}
$\bv_{h,i}^0 = \bm\varphi_{h,i}^0 := (\myR_{h, i} - \myR_{h, i}^\star) \bm\lambda \in
V_{h, i}^0$, $w_{h,i} = p_{h,i}^\lambda$, $s_i = r_i$, and combine the equations.
Using Korn's inequality \eqref{Korn-2}, 
the continuity of $a_i$ and $b_i$, Young's inequality, and the bounds on $p_{h,
  i}^\lambda$ and $\myR_{h, i}^* \bm\lambda$, we derive
\begin{align*}
\|\bm\varphi_{h,i}^0\|_{V_i}^2 
&\lesssim a_i(\bm\varphi_{h,i}^0,\bm\varphi_{h,i}^0)
= - a_i(\myR_{h, i}^\star \bm\lambda,\bm\varphi_{h,i}^0) + 
b_i(-\myR_{h, i}^\star \bm\lambda, p_{h, i}^\lambda)\\
&\lesssim \| \myR_{h, i}^\star \bm\lambda \|_{V_i} 
( \|\bm\varphi_{h,i}^0\|_{V_i} +  \| p_{h, i}^\lambda \|_{W_i})
\lesssim \| \bm\lambda \|_{\Lambda_i}^2
+ \epsilon (\|\bm\varphi_{h,i}^0\|_{V_i}^2 +  \| \myR_{h, i} \bm\lambda \|_{V_i}^2).
\end{align*}
Combining this bound with
$\| \myR_{h, i} \bm\lambda \|_{V_i}^2 \lesssim \|\bm\varphi_{h,i}^0\|_{V_i}^2
+ \| \myR_{h, i}^\star \bm\lambda \|_{V_i}^2 \lesssim
\|\bm\varphi_{h,i}^0\|_{V_i}^2 + \| \bm\lambda \|_{\Lambda_i}^2$ and taking $\epsilon$ small
enough, we obtain $\|\bm\varphi_{h,i}^0\|_{V_i} \lesssim \| \bm\lambda \|_{\Lambda_i}$,
which implies $\| \myR_{h, i} \bm\lambda \|_{V_i} \lesssim \| \bm\lambda \|_{\Lambda_i}$
for $i \in I_S$, concluding the proof.
\end{proof}

\begin{lemma}[A3] \label{lem: A3 SD}
	The four inequalities from Lemma~\ref{lem: Brezzi conditions} hold for $V_h \times W_h$.
\end{lemma}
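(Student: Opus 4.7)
\textbf{Proof plan for Lemma~\ref{lem: A3 SD}.}
The plan is to verify the four inequalities \eqref{ineqs: Brezzi conditions} for the Stokes-Darcy bilinear forms \eqref{eqs: bilinear forms SD}, keeping in mind that the $V$-norm is the $H^1$-norm on Stokes subdomains and the $H(\div)$-norm on Darcy subdomains. Continuity of $b$ is immediate from Cauchy-Schwarz. Continuity of $a$ reduces subdomain by subdomain to: \eqref{K-spd} with Cauchy-Schwarz on the Darcy blocks, and the bound $\|\epsilon(\bv)\|_{\Omega_i} \lesssim \|\bv\|_{1,\Omega_i}$ together with the boundedness of $\tilde\mu$ on the Stokes blocks, plus a standard trace inequality $\|\bv\|_{\Gamma_i\cap\Gamma_{SD}} \lesssim \|\bv\|_{1,\Omega_i}$ to handle the BJS contribution.

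For coercivity, assume $\bv_h \in V_h$ satisfies $b(\bv_h, w_h) = 0$ for all $w_h \in W_h$. Since the chosen stable MFE pairs satisfy $\nabla\cdot V_{h,i} \subseteq W_{h,i}$, this forces $\nabla\cdot \bv_{h,i} = 0$ in each $\Omega_i$. On Darcy subdomains, \eqref{K-spd} then gives $a_i(\bv_{h,i},\bv_{h,i}) \gtrsim \|\bv_{h,i}\|_{\Omega_i}^2 = \|\bv_{h,i}\|_{\div,\Omega_i}^2$ directly. On Stokes subdomains the main tool is Korn's inequality \eqref{Korn-2}, which controls $\|\bv_{h,i}\|_{1,\Omega_i}$ by $\|\epsilon(\bv_{h,i})\|_{\Omega_i}$ once the rigid-body component of $\bv_{h,i}$ is pinned down. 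The BJS term in $a_i$ annihilates rigid-body modes on subdomains adjacent to $\Gamma_{SD}$, the Dirichlet condition $\bv_{h,i} = 0$ on $\Gamma_S\cap\partial\Omega_i$ does the same for subdomains touching the outer boundary, and for Stokes subdomains that meet neither, I would propagate the Korn bound across internal Stokes interfaces using the globally conforming structure of the Stokes mortar space $\Lambda_{h,S} = \Tr \tilde V_{h,S}$ to argue that rigid-body modes on an interior subdomain are controlled by the trace data inherited from neighbors that do meet $\Gamma_S$ or $\Gamma_{SD}$.

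For the discrete inf-sup condition, I would mirror the argument used in the proof of \eqref{ineq: b_infsup}: given $w_h \in W_h$, solve an auxiliary global divergence problem $\nabla\cdot \bv^w = w_h$ on $\Omega$ with $\bv^w = 0$ on $\Gamma_S$ and suitable Neumann-type data on $\Gamma_D$, producing $\bv^w \in \tilde V$ with $\|\bv^w\|_V \lesssim \|w_h\|_W$. Next, construct $\mu_h \in \Lambda_h$ so that its trace carries the correct total flux across every interior subdomain boundary, using mean values on Darcy-Darcy interfaces as in \eqref{ineq: b_infsup} and the interpolant $\mathcal{Q}_{h,i}$ (with its key property \eqref{prop-Qh}) on interfaces involving Stokes subdomains. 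Finally, set $\bv_h := \bv_h^0 + \myR_h \mu_h$, where $\bv_h^0 \in V_h^0$ is supplied componentwise by the local inf-sup \eqref{ineq: b_i infsup SD} to absorb the remaining divergence residual $w_{h,i} - \nabla\cdot\myR_{h,i}\mu_h \in W_{h,i}^0$. Continuity of $\myR_h$ from Lemma~\ref{lem: A2 SD} together with stability of the trace projections yields $\|\bv_h\|_V \lesssim \|w_h\|_W$ and $b(\bv_h, w_h) = \|w_h\|_W^2$.

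The main obstacle I anticipate is the coercivity argument on Stokes subdomains that are strictly interior to $\Omega_S$: locally neither the BJS term nor a Dirichlet boundary piece suppresses rigid-body modes, so the control must be transferred from neighboring subdomains via the trace space $\Lambda_h$ through a chained application of Korn's inequality, carefully exploiting the conforming structure of $\tilde V_{h,S}$ on $\Omega_S$. A secondary delicate point is ensuring that the mortar $\mu_h$ built in the inf-sup step actually lies in $\Lambda_h$ and respects the mixed trace norm $\|\cdot\|_\Lambda$, which combines $L^2$ on Darcy interfaces with $H^{1/2}$ on Stokes interfaces.
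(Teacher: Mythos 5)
Your outline of the continuity bounds matches the paper, and you correctly identify the two genuine difficulties (rigid-body modes on interior Stokes subdomains, and building a mortar in the globally continuous space $\Lambda_{h,S}$). However, your proposed resolutions of both difficulties have gaps.

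For coercivity, the claim that the BJS term annihilates rigid-body modes on subdomains touching $\Gamma_{SD}$ is not correct: that term only sees the tangential trace $\bm{\nu}_i\times\bu_{h,i}$ on $\Gamma_i\cap\Gamma_{SD}$, so for instance a translation normal to a flat portion of $\Gamma_{SD}$ is invisible to it. More importantly, the ``chained Korn'' propagation across interior Stokes interfaces does not confront the real obstruction: the discrete velocities are non-conforming across $\Gamma_{SS}$, since $\Tr_i\bu_{h,i}=\mathcal{Q}_{h,i}\bm{\lambda}_h$ is a subdomain-dependent projection of the mortar, so the traces of neighboring $\bu_{h,i}$ do not match and control cannot simply be handed from one subdomain to the next. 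The paper's device is to construct, for each $i\in I_S$, a globally conforming companion $\tilde\bu_{h,i}\in\tilde V_{h,i}$ via the local problems \eqref{Eh}--\eqref{uh-tilde}; the face-wise orthogonality \eqref{Qh-orth} (with $\RM(\Omega_i)|_F\subset V^F_{h,i}$) guarantees $(\bu_{h,i}-\tilde\bu_{h,i},\bm{m})_{\Gamma_i}=0$ for all rigid-body motions $\bm{m}$, so Korn \eqref{Korn-2} applies \emph{to the difference} on each $\Omega_i$, while Korn applied globally on $\Omega_S$ handles the conforming part $\tilde\bu_{h,S}$. Without some such conforming companion and the orthogonality to $\RM(\Omega_i)$, the local Korn inequality has no hypothesis to latch onto.

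For the inf-sup condition, ``using mean values on interfaces involving Stokes subdomains'' does not produce an element of $\Lambda_{h,S}$, precisely because that space is the trace of a globally continuous Lagrange space $\tilde V_{h,S}$ and cannot be prescribed interface by interface; the paper states explicitly that the Darcy construction fails here for this reason. The replacement is a \emph{global} auxiliary discrete Stokes problem \eqref{global-Stokes} on $\Omega_S$ with the coarse pressure space $W_{H,S}$ of piecewise constants per subdomain, whose inf-sup stability (Stenberg, requiring an interior vertex on each $\Gamma_{ij}$) yields $\tilde\bu^w_{h,S}$ with $(\bm{\nu}_i\cdot\Tr\tilde\bu^w_{h,S},1)_{\Gamma_i}=(w_h,1)_{\Omega_i}$; one then sets $\bm{\mu}_h:=\Tr\tilde\bu^w_{h,S}$ and uses \eqref{prop-Qh} to carry this mean-flux identity through $\mathcal{Q}_{h,i}$ to $\nabla\cdot\myR_{h,i}\bm{\mu}_h$. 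The remainder of your inf-sup step (local correction via \eqref{ineq: b_i infsup SD} and continuity of $\myR_h$) is then as in the paper.
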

\begin{proof}
Let us consider the inequalities \eqref{ineqs: Brezzi
  conditions}. First, $b_i$ from \eqref{eqs: bilinear forms SD} is
continuous due to the Cauchy-Schwarz inequality. The same holds for
$a_i$ with $i \in I_D$. For $i \in I_S$, we additionally use a trace
inequality to bound $ \beta \| \bm{\nu} \times \bm{u}_{h, i}
\|_{\Gamma_{SD}} \| \bm{\nu} \times \bm{v}_{h, i} \|_{\Gamma_{SD}}
\lesssim \| \bm{u}_{h, i} \|_{1, \Omega_i} \| \bm{v}_{h, i} \|_{1,
  \Omega_i}$.  Third, the coercivity of $a_i$ for $i \in I_D$ is shown
in Lemma~\ref{lem: Brezzi conditions}. For $i \in I_S$, Korn's
inequality \eqref{Korn} cannot be applied locally, since the velocity
is not restricted on subdomain boundaries. To that end, we recall that
$\Lambda_{h,S} = \Tr \tilde V_{h,S}$, where $\tilde V_{h,S}
\subset \tilde V|_{\Omega_S}$ is a conforming Lagrange finite element
space on a mesh $\tilde \Omega_{h,S}$ that is aligned with the domain
decomposition. Let $\tilde V_{h,i} = \tilde V_{h,S}|_{\Omega_i}$, $i \in I_S$.
We can write $\tilde V_{h,i} =
\tilde V_{h,i}^0 \oplus \mathcal{E}_{h,i}\Lambda_h$, where
$\tilde V_{h,i}^0 = \{\tilde\bv_{h,i} \in \tilde V_{h,i}:
\Tr_i \tilde\bv_{h,i} = 0 \text{ on } \Gamma_i\}$
and $\mathcal{E}_{h,i}: \Lambda_h \to \tilde V_{h,i}$ is a discrete
extension operator such that $\mathcal{E}_{h,i} \bmu_h = \bmu_h$ on
$\Gamma_i$ and
\begin{equation}\label{Eh}
  a_i(\mathcal{E}_{h,i} \bmu_h, \tilde\bv_{h,i}^0) = 0, \quad \forall \,
  \tilde\bv_{h,i}^0 \in \tilde V_{h,i}^0.
\end{equation}
Problem \eqref{Eh} is well posed, since, due to
\eqref{Korn-2}, $a_i(\cdot,\cdot)$ is coercive on $\tilde V_{h,i}^0$. Now, given
$\bu_{h,i} = \bu_{h,i}^0 + \myR_{h, i} \bm{\lambda}_h$, consider the local problem:
Find $\tilde\bu_{h,i} = \tilde\bu_{h,i}^0 + \mathcal{E}_{h, i} \bm{\lambda}_h
\in \tilde V_{h,i}$ such that
\begin{equation}\label{uh-tilde}
  a_i(\tilde\bu_{h,i}^0 + \mathcal{E}_{h, i} \bm{\lambda}_h,
  \tilde\bv_{h,i}^0 + \mathcal{E}_{h, i} \bm{\lambda}_h) =
  a_i(\bu_{h,i}, \tilde\bv_{h,i}^0 + \mathcal{E}_{h, i} \bm{\lambda}_h), \quad
  \forall \, \tilde\bv_{h,i}^0 \in \tilde V_{h,i}^0.
  \end{equation}
Note that $\bu_{h,i}$ and $\bm{\lambda}_h$ are given data. Problem \eqref{uh-tilde}
is well posed, since, using \eqref{Eh},
$a_i(\tilde\bu_{h,i}^0 + \mathcal{E}_{h, i} \bm{\lambda}_h,
\tilde\bv_{h,i}^0 + \mathcal{E}_{h, i} \bm{\lambda}_h) =
a_i(\tilde\bu_{h,i}^0,\tilde\bv_{h,i}^0) + a_i(\mathcal{E}_{h, i} \bm{\lambda}_h,
\mathcal{E}_{h, i} \bm{\lambda}_h)$, and the coercivity follows from \eqref{Korn-2}.
We further note that \eqref{uh-tilde} implies that
$a_i(\bu_{h,i} - \tilde\bu_{h,i},\tilde\bu_{h,i}) = 0$.
Also, \eqref{Qh-orth} implies that, for all $\bm{m} \in \RM(\Omega_i)$,
$(\bu_{h,i} - \tilde\bu_{h,i},\bm{m})_{\Gamma_i} =
(\mathcal{Q}_{h, i} \bm{\lambda}_h - \bm{\lambda}_h,\bm{m})_{\Gamma_i} = 0$.
Hence, Korn's inequality \eqref{Korn-2} on $\Omega_i$ gives
$\|\bu_{h,i} - \tilde\bu_{h,i}\|^2_{1,\Omega_i} \lesssim
a_i(\bu_{h,i} - \tilde\bu_{h,i},\bu_{h,i} - \tilde\bu_{h,i})$.
Then, with $\tilde\bu_{h,S} \in \tilde V_{h,S}$ defined as
$\tilde\bu_{h,S}|_{\Omega_i} = \tilde\bu_{h,i}$, we have
\begin{align*}
  \sum_{i \in I_S} a_i(\bu_{h,i},\bu_{h,i})
  & = \sum_{i \in I_S} a_i(\bu_{h,i} - \tilde\bu_{h,i},\bu_{h,i} - \tilde\bu_{h,i})
  + \sum_{i \in I_S} a_i(\tilde\bu_{h,i},\tilde\bu_{h,i})\\
  & \gtrsim \sum_{i \in I_S} \|\bu_{h,i} - \tilde\bu_{h,i}\|^2_{1,\Omega_i}
  + \|\tilde\bu_{h,S}\|^2_{1,\Omega_S}
  \gtrsim \sum_{i \in I_S} \|\bu_{h,i}\|^2_{1,\Omega_i},
\end{align*} 
where in the first inequality we used Korn's inequality \eqref{Korn-2} applied
globally on $\Omega_S$. This completes the proof of the coercivity of $a(\cdot,\cdot)$ on
$V_h$.

Next, we prove the inf-sup condition \eqref{ineq: b_infsup} by
constructing $\bm{v}_h \in V_h$ for a given $w_h \in W_h$.
We follow the approach from Lemma~\ref{lem: Brezzi conditions}
and consider a global divergence problem on $\Omega$, cf. \eqref{global-div}
to construct $\bm{v}^w \in (H^1(\Omega))^n$ with the properties
\begin{align*}
  \nabla \cdot \bm{v}^w &= w_h \text{ in } \Omega, \quad \bm{v}^w = 0 \text{ on }
  \Gamma_S, \quad \| \bm{v}^w \|_{1, \Omega} \lesssim \| w_h \|_\Omega.
\end{align*}
The construction of $\bm{v}_h$ in $\Omega_D$ is presented in
Lemma~\ref{lem: Brezzi conditions}. We now consider the construction
of $\bm{v}_h$ in $\Omega_S$. The approach used in $\Omega_D$ to construct
$\mu_h \in \Lambda_h$ does not work in $\Omega_S$, due
the global continuity of $\Lambda_h$. Instead, we consider a discrete Stokes
problem in $\Omega_S$ based on the finite element pair $\tilde V_{h,S} \times W_{H,S}$,
where we recall that $\Lambda_{h,S} = \Tr \tilde V_{h,S}$ and we define $W_{H,S}$ to
be the space of piecewise constants on the partition form by the subdomains $\Omega_i$,
$i \in I_S$. Assuming that there is at least one interior vertex in each $\Gamma_{ij}$,
the pair $\tilde V_{h,S} \times W_{H,S}$ is inf-sup stable, see
\cite[Lemma~3.3]{Stenberg84}. Let
$\tilde \bu^w_{h,S} \in \tilde V_{h,S}$ be a discrete Stokes projection of $\bm{v}^w$
in $\Omega_S$ based on solving the problem: Find
$(\tilde \bu^w_{h,S},p_{H,S}^w) \in \tilde V_{h,S} \times W_{H,S}$ such that
\begin{subequations} \label{global-Stokes}
\begin{align}
&  (\nabla\tilde \bu^w_{h,S}, \nabla\tilde\bv_{h,S})_{\Omega_S}
- (\nabla\cdot\tilde\bv_{h,S}, p_{H,S}^w)_{\Omega_S}
= (\nabla\bv^w, \nabla\tilde\bv_{h,S})_{\Omega_S}, \ \
\forall \, \tilde\bv_{h,S} \in \tilde V_{h, S}, \\
& (\nabla\cdot\tilde\bu^w_{h,S}, w_{H,S})_{\Omega_S}
= (\nabla\cdot\bv^w, w_{H,S})_{\Omega_S}, \ \
 \forall \, w_{H,S} \in W_{H,S}. \label{global-Stokes-WH}
\end{align}
\end{subequations}
The continuity of the Stokes finite element approximation implies
$\|\tilde \bu^w_{h,S}\|_{1,\Omega_S} \lesssim \|\bv^w\|_{1,\Omega_S}$. We now define
$\bmu_h := \Tr \tilde \bu^w_{h,S} \in \Lambda_{h,S}$. The trace inequality implies
$$
\sum_{i \in I_S} \| \bm{\mu}_h \|_{\Lambda_i}
\lesssim \sum_{i \in I_S} \|\tilde \bu^w_{h,S}\|_{1,\Omega_i}
\lesssim \|\bv^w\|_{1,\Omega_S} \lesssim \|w_h \|_\Omega.
$$
Moreover, \eqref{global-Stokes-WH} gives
$$
(\bnu_i\cdot\bm{\mu}_h,1)_{\Gamma_i} = (\nabla\cdot\tilde\bu^w_{h,S},1)_{\Omega_i}
= (\nabla\cdot\bv^w,1)_{\Omega_i} = (w_h,1)_{\Omega_i}, \quad \forall \, i \in I_S.
$$
Now, using \eqref{Rh-BC-general} and \eqref{prop-Qh}, we obtain
$$
(\nabla\cdot\myR_{h, i} \bmu_h, 1)_{\Omega_i}
= (\bnu_i\cdot \mathcal{Q}_{h, i} \bmu_h,1)_{\Gamma_i}
= (\bnu_i\cdot \bmu_h,1)_{\Gamma_i} = (w_h,1)_{\Omega_i}, \quad i \in I_S.
$$
Using the discrete inf-sup condition \eqref{ineq: b_i infsup SD},
we construct $\bm{v}_{h, i}^0 \in V_{h, i}^0$ such that
$$
\nabla \cdot \bm{v}_{h, i}^0 = w_{h, i} - \nabla \cdot \myR_{h, i} \bmu_h \
\text{ in } \ \Omega_i,
\quad \|\bm{v}_{h, i}^0 \|_{1,\Omega_i} \lesssim
\|w_{h, i} - \nabla \cdot \myR_{h, i} \bmu_h\|_{\Omega_i},
$$
and set $\bm{v}_{h,i} = \bm{v}_{h,i}^0 + \myR_{h,i} \bmu_h$. We have
\begin{align*}
  & \sum_{i \in I_S} b_i(\bm{v}_{h,i}, w_{h, i}) = \| w_{h} \|_{\Omega_S}^2, \\
  & \sum_{i \in I_S} \| \bm{v}_{h,i} \|_{V_i} \lesssim
  \sum_{i \in I_S} \| \myR_{h,i} \bmu_h \|_{V_i} + \| w_h \|_{\Omega_S}
  \lesssim \| w_h \|_W,
\end{align*}
using Lemma~\ref{lem: A2 SD} in the last inequality. Combined with the construction
in $\Omega_D$ from Lemma~\ref{lem: Brezzi conditions}, this implies
the inf-sup condition \eqref{ineq: b_infsup}.
\end{proof}

\begin{lemma}[A4] \label{lem: A4 SD}
	The interpolation operator $\Pi^V$ has the property
	\begin{align}
		b(\bm{u} - \Pi^V \bm{u}, w_h) = 0, \quad \forall \, w_h \in W_h.
	\end{align}	
\end{lemma}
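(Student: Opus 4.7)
The plan mirrors Lemma~\ref{lem: B-compatible}, carried out subdomain by subdomain. With $\lambda := \Tr\bm{u}$, I would decompose
\[
\bm{u}_i - \Pi_i^V\bm{u} = \bigl[\bm{u}_i - \myR_{h,i}\lambda - \Pi_i^V(\bm{u}_i - \myR_{h,i}\lambda)\bigr] + \myR_{h,i}(\lambda - \Pi^\Lambda\lambda)
\]
and pair with $w_{h,i}\in W_{h,i}$ under $b_i(\cdot,\cdot)$. The bracketed term vanishes by the b-compatibility built into the subdomain interpolant: in the Darcy region via the canonical commutativity \eqref{eq: commutativity} applied to $\bm{u}_i - \myR_{h,i}\lambda$, and in the Stokes region via the divergence identity \eqref{Pi-Stokes W_h} of the elliptic projection.

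What remains is to show $\sum_i b_i(\myR_{h,i}(\lambda - \Pi^\Lambda\lambda), w_{h,i}) = 0$. From \eqref{eq: R problem general W_h} each summand equals $(r_i, w_{h,i})_{\Omega_i}$ with $r_i\in S_{H,i}$ the Lagrange multiplier associated with boundary data $\lambda - \Pi^\Lambda\lambda$. Since $r_i\in S_{H,i}$, the pairing is only nontrivial against the $S_{H,i}$-component of $w_{h,i}$; for $i\notin I_{int}$ this component is zero. For $i\in I_{int}$, testing \eqref{eq: R problem general W_h} with $w_{h,i}=1$, integrating by parts, and using \eqref{Rh-BC-general} gives
\[
r_i\,|\Omega_i| = \bigl(\bnu_i\cdot \mathcal{Q}_{h,i}(\lambda - \Pi^\Lambda\lambda),\,1\bigr)_{\Gamma_i}.
\]

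The Darcy case $i\in I_D\cap I_{int}$ is then straightforward: on each $\Gamma_{ij}$ the constant $1$ belongs to both $\Lambda_{h,ij}$ and $\Tr_i V_{h,i}|_{\Gamma_{ij}}$, so $\Pi^\Lambda$ and $\mathcal{Q}_{h,i}$ preserve the face-wise mean, yielding $r_i = 0$. The Stokes case $i\in I_S$ is the main obstacle. Property \eqref{prop-Qh} eliminates $\mathcal{Q}_{h,i}$ and reduces the claim to $(\bnu_i\cdot(\lambda - \Pi^\Lambda\lambda),\,1)_{\Gamma_i} = 0$. However, since $\Lambda_h$ is globally continuous on $\Gamma_{SS}\cup\Gamma_{SD}$ while $\bnu_i$ jumps at corners of $\Omega_i$, the test function $\bnu_i\chi_{\Gamma_i}$ does not lie in $\Lambda_h$ and the $L^2$-orthogonality of $\Pi^\Lambda$ cannot be invoked directly. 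The expected remedy is to exploit the face-local structure $\Lambda_{h,S} = \Tr \tilde V_{h,S}$: on each flat face $F\subset\Gamma_i$ the constant vector $\bnu_i|_F$ lies in the face-local Lagrange trace space, so one argues face by face, using $L^2$-orthogonality against interior face basis functions and absorbing the small corner defects via the alignment of $\tilde\Omega_{h,S}$ with the domain decomposition assumed in the construction of $\Lambda_{h,S}$.
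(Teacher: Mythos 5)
Your decomposition and your handling of the bracketed term are exactly the paper's argument: the paper's proof consists of noting that $\Pi_i^V$ is b-compatible on each subdomain (via \eqref{eq: commutativity} for $i \in I_D$ and \eqref{Pi-Stokes W_h} for $i \in I_S$) and then invoking ``the arguments from Lemma~\ref{lem: B-compatible}'', which is precisely your reduction of the remaining term $\sum_i b_i(\myR_{h,i}(\lambda - \Pi^\Lambda\lambda), w_{h,i})$ to the vanishing of the multiplier $r_i$, i.e.\ to the mean-value identity $(\bnu_i\cdot\mathcal{Q}_{h,i}(\lambda - \Pi^\Lambda\lambda),1)_{\Gamma_i} = 0$ on interior subdomains. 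Up to that point your proposal is correct and faithful to the paper, including the use of \eqref{prop-Qh} to strip off $\mathcal{Q}_{h,i}$ and the face-wise mean preservation on $\Gamma_{DD}$.

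The gap is in your final step. For $i \in I_S$ you correctly reduce to $(\bnu_i\cdot(\bm{\lambda} - \Pi^\Lambda\bm{\lambda}),1)_{\Gamma_i} = 0$ and correctly observe that $\bnu_i\chi_{\Gamma_i}\notin\Lambda_{h,S}$ because $\Lambda_{h,S} = \Tr \tilde V_{h,S}$ is globally continuous while $\bnu_i$ jumps at subdomain corners. But the proposed remedy --- arguing face by face and ``absorbing the small corner defects'' --- cannot close this: $r_i$ is a single real number that must equal zero exactly for $b(\bm{u} - \Pi^V\bm{u},\cdot)$ to vanish on the constants in $W_{h,i}$, and there is no norm in which a nonzero defect can be absorbed. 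Even on a single flat face $F$, the constant vector $\bnu_i|_F$ extended by zero is not in the continuous trace space, so the $L^2$-orthogonality of $\Pi^\Lambda$ yields nothing exact; the same issue recurs on the $\Gamma_{SD}$-faces of interior Darcy subdomains, which you classify as straightforward. A genuine repair would replace the plain $L^2$-projection $\Pi^\Lambda$ on $\Gamma_{SS}\cup\Gamma_{SD}$ by an interpolant corrected face by face exactly as in \eqref{Stokes-mortar}--\eqref{Qh-orth}, so that $(\Pi^\Lambda\bm{\lambda} - \bm{\lambda},\bchi)_F = 0$ for $\bchi$ in a face space containing $\bnu_i|_F$. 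In fairness, the paper's own three-line proof is silent on this point and transfers the Darcy argument to the Stokes region without comment, so you have located precisely the place where ``the arguments from Lemma~\ref{lem: B-compatible}'' do not carry over verbatim; you have just not supplied a valid substitute for them.
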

\begin{proof}
We first note that $\Pi_i^V$ is b-compatible for the pair $V_{h, i}
\times W_{h, i}$ for $i \in I_D$. For $i \in I_S$, b-compatibility of
$\Pi_i^V$ is ensured by \eqref{Pi-Stokes W_h}. The arguments from
Lemma~\ref{lem: B-compatible} now provide the result.
\end{proof}	

\begin{lemma} \label{lem: consistency error SD}
If A5 holds, then the consistency error $\mathcal{E}_c$ satisfies
\begin{align*}
\mathcal{E}_c \lesssim&\ 
\sum_{i \in I_S} 
\|  \sigma_S \bm{\nu} - \Pi^{V^\Gamma}_i (\sigma_S \bm{\nu}) \|_{\Gamma_i}
+ h^{-1/2} \sum_{i \in I_D} \| p_D - \mathcal{Q}_{h, i} p_D \|_{\Gamma_i} 
\end{align*}
\end{lemma}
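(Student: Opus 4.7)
The plan is to reduce the consistency error to interface contributions via integration by parts subdomain by subdomain. Using the strong-form equations $-\nabla\cdot\sigma_S = \bm{g}_S$ in Stokes regions and $K^{-1}\bm{u} = -\nabla p_D$ in Darcy regions, the volume residuals vanish. The boundary conditions \eqref{eq: BC SD} kill the contributions on $\partial\Omega$, so the numerator in \eqref{consist-error-general} reduces to
\begin{align*}
\sum_{i \in I_S} \Big[(\sigma_S\bm{\nu}_i, \bm{v}_{h,i})_{\Gamma_i}
+ (\beta\bm{\nu} \times \bm{u}, \bm{\nu} \times \bm{v}_{h,i})_{\Gamma_i \cap \Gamma_{SD}}\Big]
- \sum_{i \in I_D}(p_D, \bm{\nu}_i \cdot \bm{v}_{h,i})_{\Gamma_i}.
\end{align*}

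I then process the three interface types separately. On each $\Gamma_{ij} \subset \Gamma_{SD}$, I use the BJS condition \eqref{eq: interface SD 1} to cancel the tangential part of the Stokes stress with the BJS bilinear term in $a_i$, and the normal-stress condition \eqref{eq: interface SD 2}, i.e.\ $\bm{\nu} \cdot (\sigma_S \bm{\nu}) = -p_D$, to merge the remaining normal Stokes stress with the Darcy pressure term. The remaining discrepancy stems from the mismatch between the Stokes-side trace $\mathcal{Q}_{h,i}\bm{\mu}_h$ and the Darcy-side trace $\mathcal{Q}_{h,j}\bm{\mu}_h$. Since \eqref{prop-Qh} guarantees face-wise orthogonality of $\mathcal{Q}_{h,i}\bm{\mu}_h - \bm{\mu}_h$ against $\bm{\nu}_i$, subtracting $\Pi^{V^\Gamma}_i(\sigma_S \bm{\nu})$ and applying Cauchy--Schwarz and the trace inequality $\|\bm{v}_{h,i}\|_{\Gamma_i} \lesssim \|\bm{v}_{h,i}\|_{1,\Omega_i}$ bounds the residual by $\|\sigma_S \bm{\nu} - \Pi^{V^\Gamma}_i(\sigma_S \bm{\nu})\|_{\Gamma_i} \|\bm{v}_{h,i}\|_{V_i}$.

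On each $\Gamma_{ij} \subset \Gamma_{SS}$ the same orthogonality strategy applies: pairing neighboring subdomain contributions and inserting $\pm\Pi^{V^\Gamma}_i(\sigma_S\bm{\nu})$ into the stress--trace pairing yields the analogous Stokes stress-approximation contribution. On $\Gamma_{ij} \subset \Gamma_{DD}$, I follow the argument of Section~\ref{ssub: err_projection_flat}: decompose $\bm{v}_{h,j} = \bm{v}_{h,j}^0 + \myR_{h,j}\tilde\mu_h$ for some $\tilde\mu_h \in \Lambda_h$, use that $\mathcal{Q}_{h,j}$ is the $L^2$-projection onto the normal trace space to rewrite $(p_D, \bm{\nu}_j \cdot \bm{v}_{h,j})_{\Gamma_{ij}} = (p_D - \mathcal{Q}_{h,j} p_D, \tilde\mu_{h,j})_{\Gamma_{ij}}$, and apply the mortar condition~A5 together with a discrete trace inequality to produce the factor $h^{-1/2}\|p_D - \mathcal{Q}_{h,j} p_D\|_{\Gamma_j}$.

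The main obstacle will be tracking cancellations at $\Gamma_{SD}$ carefully, since the Stokes trace is a full vector while the Darcy trace is only its normal component, so the BJS and normal-stress conditions must be combined with the orthogonality \eqref{prop-Qh} in a coordinated way. The face-wise identity $(\mathcal{Q}_{h,i}\bm{\mu}_h - \bm{\mu}_h, \bm{\nu}_i)_F = 0$ is precisely what allows the Stokes--Darcy normal-trace mismatch to be absorbed into the stated $\|\sigma_S\bm{\nu} - \Pi^{V^\Gamma}_i(\sigma_S\bm{\nu})\|_{\Gamma_i}$ contribution, rather than producing a new pressure- or stress-approximation term in the bound.
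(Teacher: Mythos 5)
Your proposal is correct and follows essentially the same route as the paper's proof: integration by parts with the strong-form equations reduces $\mathcal{E}_c$ to interface terms, the BJS and normal-stress conditions \eqref{eq: interface SD 1}--\eqref{eq: interface SD 2} handle $\Gamma_{SD}$, the argument of Section~\ref{ssub: err_projection_flat} handles the Darcy contributions, and inserting $\pm\Pi^{V^\Gamma}_i(\sigma_S\bm{\nu}_i)$ together with face-wise orthogonality, the continuity of $\sigma_S$, and A5 handles the Stokes stress terms. The one imprecision is that the orthogonality needed to replace $\mathcal{Q}_{h,i}\bm{\mu}_h$ by $\bm{\mu}_h$ when paired with $\Pi^{V^\Gamma}_i(\sigma_S\bm{\nu}_i)$ is the full face-wise property \eqref{Qh-orth} (orthogonality against all of $V^F_{h,i}$), not merely the mean-value condition \eqref{prop-Qh} you cite.
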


\begin{proof}
We consider the term in the numerator of the definition
\eqref{consist-error-general} of $\mathcal{E}_c$. We recall the definitions
of the bilinear forms in \eqref{eqs: bilinear forms SD} and apply
integration by parts. Since $(\bm{u}, p)$ is the solution to
\eqref{eqs: Stokes-Darcy}, we substitute the momentum balance
\eqref{eq: mom balance SD}, Darcy's law \eqref{eq: Darcy SD}, the
BJS interface condition in \eqref{eq: interface SD 1}, and the boundary
conditions \eqref{eq: BC SD} to derive
\begin{align*} 
  & \sum_{i \in I_S} \big( a_i(\bm{u}, \bm{v}_h)
  - b_i(\bm{v}_h, p) - (\bm{g}, \bm{v}_h)_{\Omega_i} \big)
+ \sum_{i \in I_D} \big( a_i(\bm{u}, \bm{v}_h) - b_i(\bm{v}_h, p) \big) \\
& \quad = 
\sum_{i \in I_S} \big( (\sigma_S \bm{\nu}_i, \bm{v}_{h, i})_{\Gamma_i}
+ (\beta \bm{\nu}_i \times \bm{u}_i, \bm{\nu}_i \times \bm{v}_{h,i})_{\Gamma_i \cap \Gamma_{SD}} \big)
+ \sum_{i \in I_D}- (p_D, \bm{\nu}_i \cdot \bm{v}_{h, i})_{\Gamma_i} \\
& \quad = 
\sum_{i \in I_S} \big( (\sigma_S \bm{\nu}_i, \bm{v}_{h, i})_{\Gamma_i \cap \Gamma_{SS}}
+ (\bnu_i\cdot\sigma_S \bm{\nu}_i, \bnu_i\cdot\bm{v}_{h, i})_{\Gamma_i \cap \Gamma_{SD}} \big)
+ \sum_{i \in I_D} - (p_D, \bm{\nu}_i \cdot \bm{v}_{h, i})_{\Gamma_i}.
\end{align*}
The terms on $\Gamma_{DD}$ are bounded in Section~\ref{ssub: err_projection_flat}.
For the terms on $\Gamma_{SS}$ we proceed in a similar way. Let
$\bv_{h,i} = \bv_{h,i}^0 + \myR_{h,i} \bmu_{h}$.  Using the orthogonality
property \eqref{Qh-orth}, the continuity of $\sigma_S$ on $\Gamma_{SS}$,
and condition A5, we obtain
\begin{align*}
& \sum_{i \in I_S} (\sigma_S \bm{\nu}_i, \bm{v}_{h, i})_{\Gamma_i \cap \Gamma_{SS}} 
= \sum_{i \in I_S} (\sigma_S \bm{\nu}_i, \myQ_{h,i} \bmu_h)_{\Gamma_i \cap \Gamma_{SS}} \\
  & \quad = \sum_{i \in I_S} \big( (\sigma_S \bm{\nu}_i - \Pi^{V^\Gamma}_i(\sigma_S \bm{\nu}_i) ,
  \myQ_{h,i} \bmu_h)_{\Gamma_i \cap \Gamma_{SS}}
  + (\Pi^{V^\Gamma}_i(\sigma_S \bm{\nu}_i) , \bmu_h)_{\Gamma_i \cap \Gamma_{SS}} \big) \\
  & \quad = \sum_{i \in I_S} \big( (\sigma_S \bm{\nu}_i - \Pi^{V^\Gamma}_i(\sigma_S \bm{\nu}_i) ,
  \myQ_{h,i} \bmu_h)_{\Gamma_i \cap \Gamma_{SS}}
  + (\Pi^{V^\Gamma}_i(\sigma_S \bm{\nu}_i) - \sigma_S \bm{\nu}_i, \bmu_h)_{\Gamma_i \cap \Gamma_{SS}} \big) \\
  & \quad \lesssim \sum_{i \in I_S}
  \|\sigma_S \bm{\nu}_i - \Pi^{V^\Gamma}_i(\sigma_S \bm{\nu}_i)\|_{\Gamma_i \cap \Gamma_{SS}}
  \|\myQ_{h,i} \bmu_h\|_{\Gamma_i \cap \Gamma_{SS}} \\
  & \quad = \|\sigma_S \bm{\nu}_i - \Pi^{V^\Gamma}_i(\sigma_S \bm{\nu}_i)\|_{\Gamma_i \cap \Gamma_{SS}}
  \|\bv_{h,i}\|_{\Gamma_i \cap \Gamma_{SS}} \\
  & \quad \lesssim \sum_{i \in I_S}
  \|\sigma_S \bm{\nu}_i - \Pi^{V^\Gamma}_i(\sigma_S \bm{\nu}_i)\|_{\Gamma_i \cap \Gamma_{SS}}
  \| \bm{v}_{h, i} \|_{1, \Omega_i},
\end{align*}
using the trace inequality, for all $i \in I_S$,
$\| \bm{v}_{h, i} \|_{\Gamma_i} \lesssim \| \bm{v}_{h, i} \|_{1, \Omega_i}$.

It remains to bound the terms on $\Gamma_{SD}$. Note that there are contributions
from $\Omega_S$ and $\Omega_D$. For $i \in I_S$, we first note that the locality of
the orthogonality \eqref{Qh-orth} for each flat face $F$ implies that
$(\bnu_i\cdot(\mathcal{Q}_{h, i} \bl - \bl), \bnu_i\cdot\bchi_{h,i})_{\Gamma_i\cap\Gamma_{SD}} = 0$
$\forall \bchi_{h,i} \in V^F_{h,i}$.
Using this, the term
$(\bnu_i\cdot\sigma_S \bm{\nu}_i, \bnu_i\cdot\bm{v}_{h, i})_{\Gamma_i \cap \Gamma_{SD}}$
is manipulated as in the above argument, while the Darcy term
$-(p_D, \bm{\nu}_i \cdot \bm{v}_{h, i})_{\Gamma_i\cap\Gamma_{SD}}$ is manipulated as
in Section~\ref{ssub: err_projection_flat}. The two expressions are combined using
the interface condition \eqref{eq: interface SD 2}, resulting in the bound
\begin{align*}
& \sum_{i \in I_S}
(\bnu_i\cdot\sigma_S \bm{\nu}_i, \bnu_i\cdot\bm{v}_{h, i})_{\Gamma_i \cap \Gamma_{SD}}
+ \sum_{i \in I_D} - (p_D, \bm{\nu}_i \cdot \bm{v}_{h, i})_{\Gamma_i \cap \Gamma_{SD}} \\
& 
\quad \lesssim \sum_{i \in I_S}
\|\sigma_S \bm{\nu}_i - \Pi^{V^\Gamma}_i(\sigma_S \bm{\nu}_i)\|_{\Gamma_i \cap \Gamma_{SD}}
\| \bm{v}_{h, i} \|_{1, \Omega_i} \\
& \qquad
+ h^{-1/2}\sum_{i \in I_D}\| p_D - \mathcal{Q}_{h, i} p_D \|_{\Gamma_i\cap\Gamma_{SD}}\| \bm{v}_{h, i} \|_{\Omega_i}.
\end{align*}
The proof is completed by collecting the bounds on $\Gamma_{DD}$,
$\Gamma_{SS}$, and $\Gamma_{SD}$.
\end{proof}

\subsubsection{Reduction to an Interface Problem}
\label{ssub:reduction_to_an_interface_problem_SD}

The coupled Stokes-Darcy problem can be reduced to a flux-mortar
interface problem following the four steps
\eqref{preproblem}--\eqref{eq: postproblem} from
Section~\ref{sec:reduction_to_an_interface_problem}. To that end,
we introduce the
following preliminary definitions. Let $\tilde\myR_h: \Lambda_h \to V_h$ be a
generic extension operator such that $\Tr_i \tilde\myR_{h,i}\mu =
\mathcal{Q}_{h,i}\mu$. For implementation reasons, we choose
$\tilde\myR_h$ to have minimal support. Let $B: \Lambda_h \to S_H$
be such that $(B \mu_h, s_H)_\Omega := b(\tilde\myR_h \mu_h, s_H)$ for all $(\mu_h,
s_H) \in \Lambda_h \times S_H$.  Next, let $\Lambda_h^0 := \mathrm{Ker} B
\subseteq \Lambda_h$ and let $\overline\Lambda_h$ be its orthogonal
complement. We note the following corollary to Lemma~\ref{lem: A3 SD}.
\begin{corollary} \label{cor: inf-sup S_H SD}
	The following inf-sup condition holds for the spaces $\Lambda_h \times S_H$:
	\begin{align*}
	\forall & s_H \in S_H, \ \exists 0 \ne \mu_h \in \Lambda_h \text{ such that }
	b(\myR_h \mu_h, s_H) \gtrsim \| \mu_h \|_\Lambda \| s_H \|_W.
	\end{align*}
\end{corollary}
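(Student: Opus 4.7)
The plan is to mimic the short argument used for Corollary~\ref{cor: inf-sup S_H}: specialize the constructive proof of the discrete inf-sup condition \eqref{ineq: b_infsup} from Lemma~\ref{lem: A3 SD} to the case $w_h := s_H \in S_H \subseteq W_h$, and show that for this particular choice the internal correction $\bm{v}_h^0 \in V_h^0$ can be taken to be zero, so the resulting test field is exactly $\bm{v}_h = \myR_h \mu_h$. Recall that $S_H$ is characterized explicitly by \eqref{eq: definition S_Hi darcy}, so $s_H$ is piecewise constant with support only on interior subdomains $i \in I_{int} = I_S \cup \{ i \in I_D : \partial \Omega_i \subseteq \Gamma \}$.

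Concretely, I would follow the construction of $\mu_h \in \Lambda_h$ used in Lemma~\ref{lem: A3 SD}. On Darcy interfaces $\Gamma_{ij} \subset \Gamma_{DD}$, set $\mu_h$ equal to the piecewise mean of $\bnu\cdot \bm{v}^w$, where $\bm{v}^w$ solves the global divergence problem with right-hand side $w_h = s_H$. On the Stokes side take $\bmu_h := \Tr \tilde\bu^w_{h,S}$ obtained from the auxiliary discrete Stokes projection \eqref{global-Stokes}. The crucial observation is that the mean-value identities derived in the proofs of Lemma~\ref{lem: Brezzi conditions} and Lemma~\ref{lem: A3 SD} give, for every $i \in I_{int}$,
\begin{align*}
(\bnu_i\cdot \mathcal{Q}_{h,i}\mu_h, 1)_{\Gamma_i}
= (\bnu_i \cdot \mu_h, 1)_{\Gamma_i}
= (\nabla\cdot \bm{v}^w, 1)_{\Omega_i}
= (s_H, 1)_{\Omega_i},
\end{align*}
where in the Darcy region the first equality uses $1 \in V_{h,i}^\Gamma$ and in the Stokes region it uses \eqref{prop-Qh}. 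Combined with \eqref{eq: R problem general W_h} tested against $w_{h,i} = 1$, this forces $r_i \in S_{H,i}$ to equal $s_{H,i}$ and hence $\nabla\cdot \myR_{h,i} \mu_h = s_{H,i}$ on every interior subdomain, while on non-interior subdomains $s_H$ vanishes by definition.

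Consequently, the local correction step of Lemma~\ref{lem: A3 SD}, which normally produces $\bm{v}_{h,i}^0 \in V_{h,i}^0$ satisfying $\nabla\cdot \bm{v}_{h,i}^0 = w_{h,i} - \nabla\cdot \myR_{h,i}\mu_h$, has vanishing right-hand side on every $\Omega_i$ and can be skipped, yielding $\bm{v}_h^0 = 0$. The resulting test field $\bm{v}_h = \myR_h \mu_h$ then satisfies $b(\myR_h\mu_h, s_H) = (s_H, s_H)_\Omega = \|s_H\|_W^2$ by construction, while the bound $\|\mu_h\|_\Lambda \lesssim \|\bm{v}^w\|_{1,\Omega} \lesssim \|s_H\|_W$ follows from the continuity estimates in Lemma~\ref{lem: A3 SD} (the elementary trace bound on the Darcy side and the stability of the discrete Stokes projection on the Stokes side). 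Dividing gives the claimed inf-sup estimate.

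The only subtlety I foresee is keeping track that on a Stokes interior subdomain the elementwise identity $\nabla\cdot \myR_{h,i}\mu_h = s_{H,i}$ (and not merely the matching of means) actually holds; this is immediate because $\nabla\cdot \myR_{h,i}\mu_h = r_i$ already belongs to the space $S_{H,i} = \mathbb{R}$ of constants, so matching one linear functional fixes it. No further estimate is required.
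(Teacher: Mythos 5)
Your proposal is correct and follows essentially the same route as the paper, which simply sets $w_h := s_H$ in the proof of Lemma~\ref{lem: A3 SD} and observes that the interior correction $\bm{v}_h^0$ vanishes; you have merely spelled out the mean-value identities that make this work. The one point worth phrasing carefully is that for Stokes subdomains $\nabla\cdot\myR_{h,i}\bmu_h$ equals $r_i$ only weakly against $W_{h,i}$ (the divergence of a Stokes velocity space need not lie in the pressure space), but this weak identity is all that is needed to conclude $b_i(\myR_{h,i}\bmu_h,s_{H,i}) = \|s_{H,i}\|_{\Omega_i}^2$ and to make the correction step trivial.
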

\begin{proof}
	Setting $w_h := s_H \in S_H \subseteq W_h$ in the proof of Lemma~\ref{lem: A3 SD}
	leads to a pair $(\bm{v}_h^0, \mu_h)$ with $\bm{v}_h^0 = 0$, $\| \mu_h \|_\Lambda \lesssim \| s_H \|_W$, and $b(\myR_h \mu_h, s_H) = \| s_H \|_W^2$.
\end{proof}

Using this corollary, it follows that $B$ is an isomorphism from
$\overline\Lambda_h$ to $S_H$ by the same arguments as in
Section~\ref{sec:reduction_to_an_interface_problem}. Let $\bar f \in S_H$
be the mean value of $f$ on each interior subdomain. This allows us to perform
the first step, namely to solve a coarse problem for $\overline\lambda_f \in
\overline\Lambda_h$ such that
\begin{align}
  b(\tilde\myR_h \overline\lambda_f, s) = (f, s)_\Omega,
  \quad \forall s &\in S_H, \label{preproblem-SD}
\end{align}
or equivalently, $B \overline\lambda_f = \bar f$.

The second step consists of solving independent, local subproblems in the following form: 
Find
$(\bm{u}_f^0, p_f^0, r_f) \in V_h^0 \times W_h \times S_H$ such that
\begin{subequations}\label{pre-2 SD}
\begin{align}
	a(\bm{u}_f^0, \bm{v}^0) - b(\bm{v}^0, p_f^0)
	&=
	- a(\tilde\myR_h \overline\lambda_f, \bm{v}^0)
	+ (\bm{g}_S, \bm{v}^0)_{\Omega_S}
	,
	& \forall \bm{v}^0 &\in V_h^0, \\
	b(\bm{u}_f^0, w) - (r_f, w)_{\Omega}
	&= - b(\tilde\myR_h \overline\lambda_f, w) + (f, w)_{\Omega} ,
	& \forall w &\in W_h,
	\\
	(p_f^0, s)_{\Omega}
	&= 0,
	& \forall s &\in S_H.
\end{align}
\end{subequations}
We remark that the velocity $\bm{u}_f := \bm{u}_f^0 + \tilde\myR_h
\overline\lambda_f$ satisfies the mass conservation equation
$b(\bm{u}_f, w) = (f, w)_{\Omega}$ for all $w \in W_h$. This
function therefore needs to be updated with a divergence-free
velocity in order to satisfy the remaining equations.

The reduced interface problem now forms the third step:
Find $\lambda^0 \in \Lambda_h^0$ such that
\begin{align}\label{eqs: reduced problem SD}
a(\myR_h \lambda^0, \tilde\myR_h \mu^0) - b(\tilde\myR_h \mu^0, p^{\lambda^0}) &=
- a(\bm{u}_f, \tilde\myR_h \mu^0) + b(\tilde\myR_h \mu^0, p_f^0)
+ (\bm{g}_S, \tilde\myR_h \mu^0)_{\Omega_S}, 
\end{align}
for all $\mu^0 \in \Lambda_h^0$. Here, the pair $(\myR_h \lambda^0,
p^{\lambda^0})$ solves the discrete extension problem \eqref{eqs: R
  problem general}. Using the same arguments as in Lemma~\ref{lem:
  SPD}, it follows that \eqref{eqs: reduced problem SD} corresponds to
a symmetric, positive definite operator. Hence, the problem admits a
unique solution that can be obtained through the use of iterative
schemes such as the CG method. Each CG iteration 
requires solving Dirichlet subdomain problems with data
$\mathcal{Q}_{h, i} \lambda^0$ in both the Stokes and Darcy regions.

In analogy with Section~\ref{sec:reduction_to_an_interface_problem},
the velocity $\myR_h \lambda^0$ updates $\bm{u}_f$ such that Darcy's law in
$\Omega_D$ and the momentum balance equations in $\Omega_S$ are
satisfied for test functions in $\myR_h \Lambda_h^0$. Additionally,
this update enforces the BJS condition on the
interface $\Gamma_{SD}$, cf. \eqref{eq: interface SD 1}, due to the
definition of the bilinear form $a(\cdot,\cdot)$ in \eqref{eqs:
  bilinear forms SD}.

It remains to enforce the momentum balance equations and Darcy's law
for test functions in $\overline\Lambda_h$. We perform the
fourth and final step: Find $\overline{p}_\lambda \in S_H$ such that
\begin{align} \label{eq: postproblem SD}
	b(\tilde\myR_h \overline\mu, \overline{p}_\lambda)
	&= a(\bm{u}_f + \myR_h \lambda^0, \tilde\myR_h \overline\mu) - b(\tilde\myR_h \overline\mu, p^{\lambda^0} + p_f^0) + (\bm{g}_S, \tilde\myR_h \overline\mu)_{\Omega_S}, &
	\forall \overline\mu \in \overline\Lambda_h.
\end{align}
Note that this is a coarse problem of the form $B^T
\overline{p}_\lambda = g$ and, since $B$ is an isomorphism,
$\overline{p}_\lambda$ exists uniquely.

Finally, the solution $(\bm{u}, p)$ to the variational formulation \eqref{eq: general form}
of \eqref{eqs: Stokes-Darcy} is obtained by setting
$\bm{u} := \bm{u}_f + \myR_h \lambda^0 = \bm{u}_f^0 + \myR_h \lambda^0 + \tilde\myR_h\overline\lambda_f$
and
$p := p_f^0 + p^{\lambda^0} + \overline{p}_\lambda$.

\section{Numerical results}
\label{sec:numerical_results}

\iftrue 

In this section, we return to the model problem describing porous
medium flow and test the theoretical results from
Section~\ref{sec:a_priori_analysis} with the use of a numerical
experiment. The numerical code, implemented in DuMu$^\text{X}$
\cite{Kochetal2020Dumux,flemisch2011dumux}, is available for download at
\href{https://git.iws.uni-stuttgart.de/dumux-pub/boon2019a}{\texttt{git.iws.uni-stuttgart.de/dumux-pub/boon2019a}}. The
lowest order Raviart-Thomas mixed finite element method reduced to a
finite volume scheme with a two-point flux approximation (TPFA) is
applied in each subdomain and we solve the problem using the iterative
scheme described in
Section~\ref{sec:reduction_to_an_interface_problem}. On the mortar
grids, we investigate two options, namely the use of piecewise
constant functions ($\mathcal{P}_0$) and linear Lagrange basis
functions ($\mathcal{P}_1$). Moreover, both the projection operators
$\mathcal{Q}_h^\flat$ and $\mathcal{Q}_h^\sharp$ are considered in
order to cover all results from Section~\ref{sub:error_estimates}.

The set-up of the test is as follows. Let the domain $\Omega = \left[ 0, 1
  \right] \times \left[ 0, 2 \right]$, the permeability $K = 1$, and
the pressure and velocity be given by:

\begin{subequations}
\begin{align}
    p \left( x, y \right)
		&= y^2 \left( 1 - \frac{y}{3} \right) + x \left( 1-x \right) y \sin \left( 2 \pi x \right),
    	\label{eq:example_p_exact} \\
    \bm{u} \left( x, y \right) &=
            - \begin{bmatrix}
                y \left( \left(1-2x \right) \sin \left(2\pi x \right) - 2 \pi \left(x-1\right) x \cos \left( 2 \pi x \right) \right) \\
                \left( 2-y \right) y + x \left( 1 - x \right) \sin \left( 2 \pi x \right)
            \end{bmatrix}.
    \label{eq:example_u_exact}
\end{align}
\end{subequations}
We prescribe the pressure on the boundary $\partial \Omega$ and define the source function $f := \nabla \cdot \bm{u}$ to match with these chosen distributions.

We partition the domain into four subdomains by introducing interfaces
along the lines $x = 0.5$ and $y = 1$.  In order to investigate the
convergence rates from Section~\ref{sub:error_estimates}, we test a
sequence of refinements by a factor two.
Each subdomain is meshed with a rectangular grid such that the meshes
are non-matching at each of the four interfaces. The mortar grids are
generated such that each interface has the same number of elements.
We refer to the coarsest mesh size of the horizontally aligned mortar
grids as $h_\Gamma^0$ and we consider the two cases $h_\Gamma^0 \in
\{1/4, 1/6\}$. The initial discretization therefore has
either $2$ or $3$ elements on each interface $\Gamma_{ij}$. For an
illustration of the grid and the solution $(\bm{u}, p)$, we refer to
Figure~\ref{fig:resultPic}.

\begin{figure}
	\centering
	\begin{subfigure}{0.328\textwidth}
		\centering
		\includegraphics[height=3.0in]{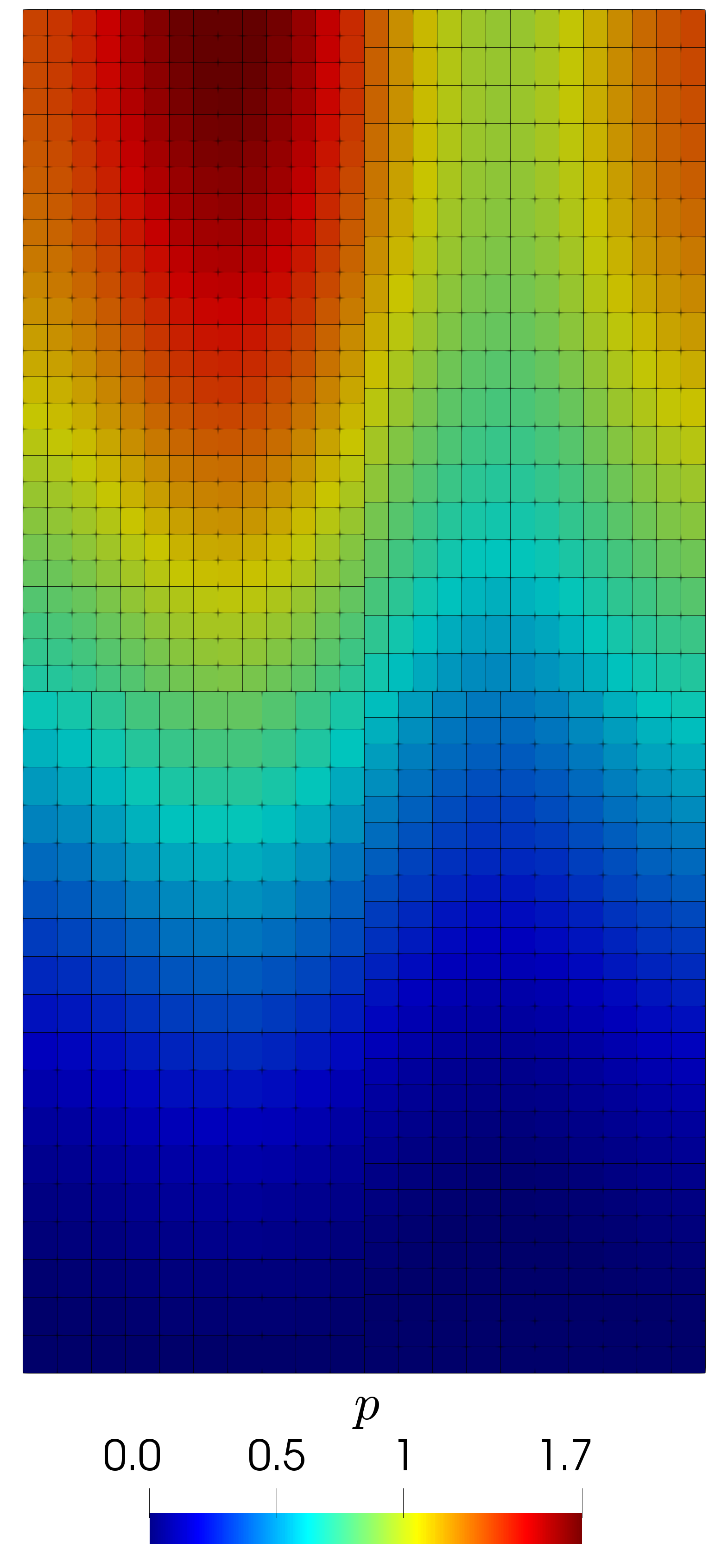}
	\end{subfigure}
	\begin{subfigure}{0.328\textwidth}
		\centering
		\includegraphics[height=3.0in]{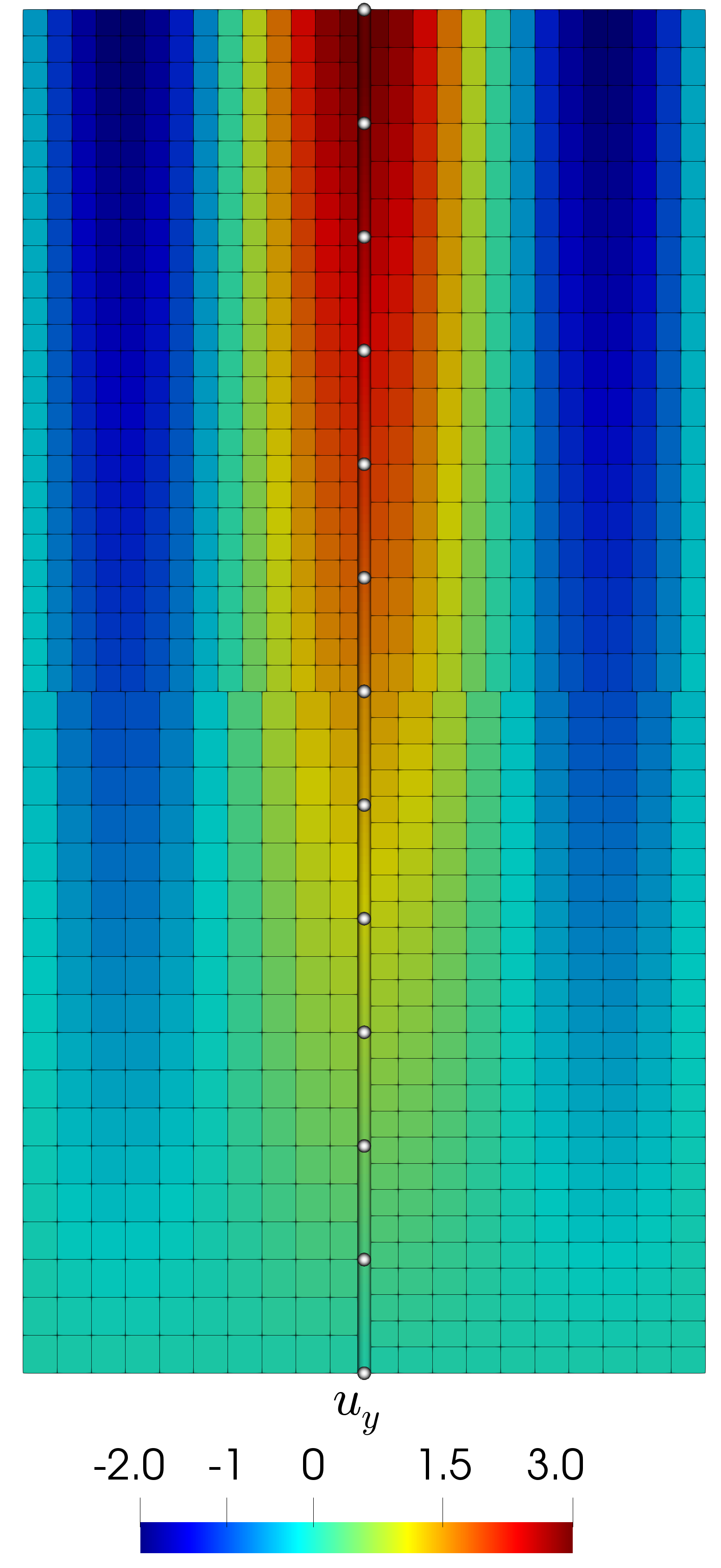}
	\end{subfigure}
	\begin{subfigure}{0.328\textwidth}
		\centering
		\includegraphics[height=3.0in]{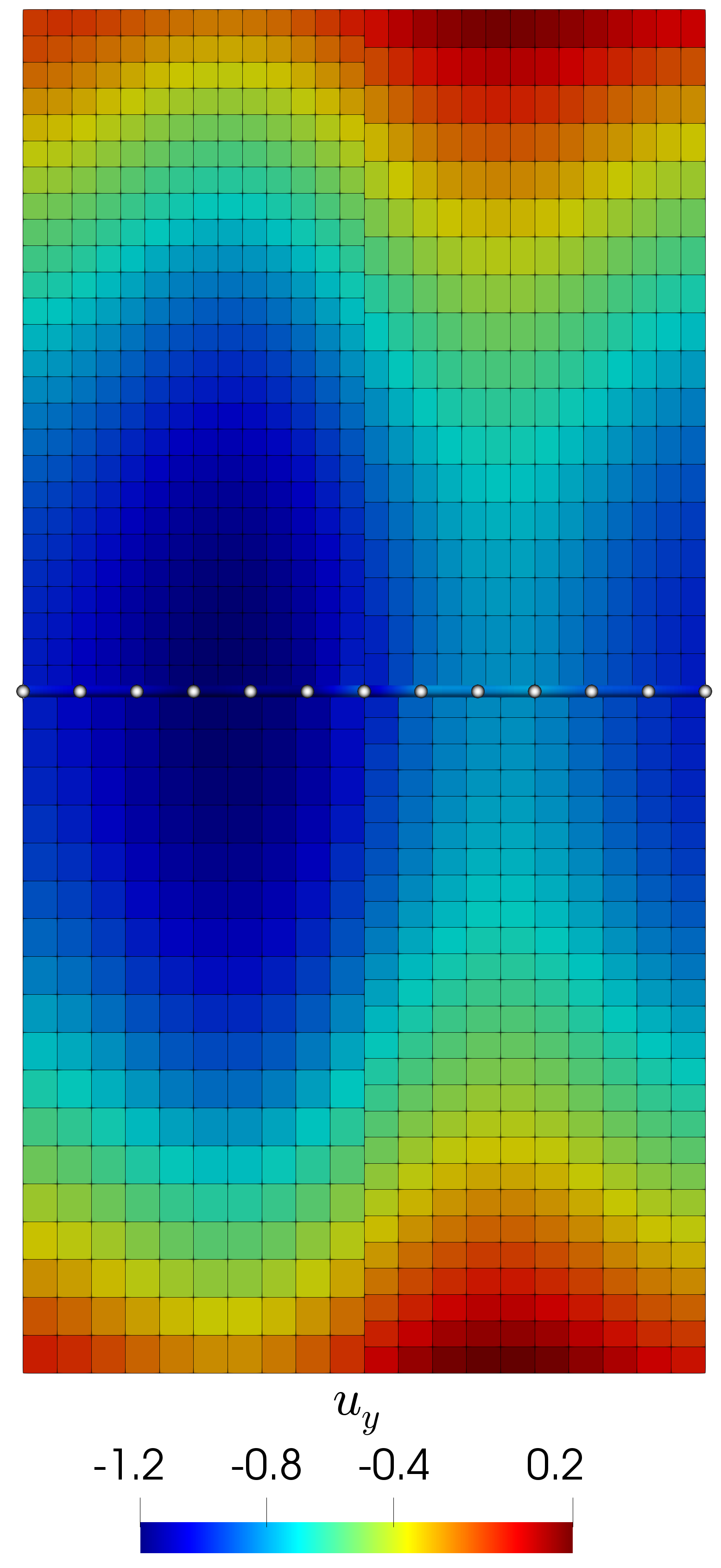}
	\end{subfigure}
	\caption{Pressure (left) and velocity (center and right)
          distributions computed after the first refinement using
          continuous, piecewise linear mortars ($\mathcal{P}_1$) and
          initial mesh size $h_\Gamma^0 = 1/6$. The vertical (center)
          and horizontal (right) mortar grids are visualized as tubes
          with white circles indicating the vertices.}
	\label{fig:resultPic}
\end{figure}

We analyze the decrease of the $L^2$ errors of the velocity $e_u :=
\|\bu - \bu_h\|_\Omega$ , the pressure $e_p := \|p - p_h\|_\Omega$,
the flux-mortar $e_\lambda := \|\lambda - \lambda_h\|_\Gamma$, and the
projected flux-mortar $e_{\myQ\lambda} := \|\lambda -
\myQ_h\lambda_h\|_\Gamma$. Convergence results for $h_\Gamma^0 = 1/4$
are presented in Tables \ref{tab:errorsRatesCG1_2cell} and
\ref{tab:errorsRatesDG0_2cell} for $\mathcal{P}_1$ and $\mathcal{P}_0$
mortars, respectively. The mortar grid is sufficiently coarse
and the mortar conditions \eqref{eq: flat mortar condition} and
\eqref{eq: sharp mortar condition} are satisfied. The rates $r_u$ and
$r_p$ indicate first order convergence for the velocity and pressure
for both projectors $\mathcal{Q}_h^\flat$ and $\mathcal{Q}_h^\sharp$
and both $\mathcal{P}_1$ and $\mathcal{P}_0$ mortars.  We note that
the theory predicts $O(h)$ convergence only for $\mathcal{Q}_h^\sharp$
in the case of $\mathcal{P}_1$ mortars, while $O(h^{1/2})$ is predicted
in the other cases. The results indicate that consistency error
$\mathcal{E}_c$, cf. Sections \ref{ssub: err_projection_sharp} and
\ref{ssub: err_projection_flat}, does not have a
noticeable influence at these mesh sizes. For the mortar variable, we
observe that the rates $r_\lambda$ and $r_{\myQ\lambda}$ are lower by
approximately one half compared to $r_u$ and $r_p$. This is in
agreement with Lemma~\ref{lem: mortar error}.

The most striking observation in both of these tables is that the two
extension operators $\myR_h^\flat$ and $\myR_h^\sharp$ produce nearly
indistinguishable solutions. However, we have verified numerically
that $\myR_h^\flat$ does not produce velocity fields with weakly
continuous fluxes across the interfaces, so it is indeed different
from $\myR_h^\sharp$. The closeness of the results is another
indication that the interface consistency error $\mathcal{E}_c$ is
dominated by the subdomain discretization error.

\begin{table}[tbhp]
    {\footnotesize
      \caption{Errors and convergence rates for $h_\Gamma^0 = 1/4$ and
        $\mathcal{P}_1$ mortars.}
   \label{tab:errorsRatesCG1_2cell}
    \begin{center}
    \begin{tabular}{l |l l |l l |l l |l l}
        \toprule
        $\mathcal{P}_1$ & $e_u^\flat$          & $r_u^\flat$ & $e_p^\flat$   & $r_p^\flat$ &   $e_\lambda^\flat$  &  $r_\lambda^\flat$  &  $e_{\mathcal{Q}\lambda}^\flat$    &  $r_{\mathcal{Q}\lambda}^\flat$ \\
        \midrule
        0               & {7.05e-2}            &             & {4.43e-2}     &             & {3.78e-2}            &                     & {1.11e-1}                          &       	\\
        1               & {2.76e-2}            & {1.35}      & {2.18e-2}     & {1.02}      & {1.78e-2}            & {1.08}              & {5.22e-2}                          & {1.01}	\\
        2               & {1.26e-2}            & {1.14}      & {1.08e-2}     & {1.01}      & {1.13e-2}            & {0.65}              & {2.79e-2}                          & {0.91}	\\
        3               & {6.11e-3}            & {1.04}      & {5.42e-3}     & {1.00}      & {7.91e-3}            & {0.52}              & {1.59e-2}                          & {0.81}	\\
        4               & {3.03e-3}            & {1.01}      & {2.71e-3}     & {1.00}      & {5.58e-3}            & {0.50}              & {9.62e-3}                          & {0.72}	\\
		5               & {1.51e-3}            & {1.00}      & {1.35e-3}     & {1.00}      & {3.95e-3}            & {0.50}              & {6.15e-3}                          & {0.64}	\\
		\toprule
         & $e_u^\sharp$        & $r_u^\flat$ & $e_p^\sharp$ & $r_p^\sharp$   &   $e_\lambda^\sharp$    &     $r_\lambda^\sharp$   &    $e_{\mathcal{Q}\lambda}^\sharp$    &  $r_{\mathcal{Q}\lambda}^\sharp$ \\
        \midrule
        0       &  {7.05e-2}   &             &  {4.43e-2}   &                &   {3.78e-2}             &                          & {1.05e-1}                             &         	\\
        1       &  {2.76e-2}   &    {1.35}   &  {2.18e-2}   &    {1.04}      &   {1.79e-2}             &      {1.08}              & {5.23e-2}                             &   {1.01}	\\
        2       &  {1.29e-2}   &    {1.14}   &  {1.08e-2}   &    {1.01}      &   {1.14e-2}             &      {0.65}              & {2.79e-2}                             &   {0.91}	\\
        3       &  {6.11e-3}   &    {1.04}   &  {5.42e-3}   &    {1.00}      &   {7.92e-3}             &      {0.52}              & {1.59e-2}                             &   {0.81}	\\
        4       &  {3.03e-3}   &    {1.01}   &  {2.71e-3}   &    {1.00}      &   {5.59e-3}             &      {0.50}              & {9.62e-3}                             &   {0.72}	\\
		5       &  {1.51e-3}   &    {1.01}   &  {1.35e-3}   &    {1.00}      &   {3.95e-3}             &      {0.50}              & {6.15e-3}                             &   {0.64}	\\
        \bottomrule
    \end{tabular}
    \end{center}
    }
\end{table}

\begin{table}[tbhp]
    {\footnotesize
    \caption{Errors and convergence rates for $h_\Gamma^0 = 1/4$ and $\mathcal{P}_0$ mortars.}
        \label{tab:errorsRatesDG0_2cell}
    \begin{center}
    \begin{tabular}{l |l l |l l |l l |l l}
        \toprule
        $\mathcal{P}_0$ & $e_u^\flat$   & $r_u^\flat$ & $e_p^\flat$   & $r_p^\flat$ &   $e_\lambda^\flat$  &  $r_\lambda^\flat$  &  $e_{\mathcal{Q}\lambda}^\flat$    &  $r_{\mathcal{Q}\lambda}^\flat$ \\
        \midrule
        0               &  {1.37e-1}    &             &  {4.48e-2}    &             &   {3.41e-1}          &                     & {4.20e-1}                          &         	\\
        1               &  {4.78e-2}    &    {1.51}   &  {2.18e-2}    &    {1.04}   &   {1.70e-1}          &    {1.01}           & {2.06e-1}                          &   {1.03}	\\
        2               &  {1.85e-2}    &    {1.37}   &  {1.08e-2}    &    {1.01}   &   {8.56e-2}          &    {0.99}           & {1.03e-1}                          &   {0.99}	\\
        3               &  {7.91e-3}    &    {1.23}   &  {5.42e-3}    &    {1.00}   &   {4.49e-2}          &    {0.93}           & {5.41e-2}                          &   {0.93}	\\
        4               &  {3.72e-3}    &    {1.09}   &  {2.71e-3}    &    {1.00}   &   {2.62e-2}          &    {0.77}           & {3.18e-2}                          &   {0.76}	\\
		5               &  {1.92e-3}    &    {0.96}   &  {1.35e-3}    &    {1.00}   &   {1.88e-2}          &    {0.48}           & {2.33e-2}                          &   {0.45}	\\
		\toprule
          & $e_u^\sharp$        & $r_u^\flat$ & $e_p^\sharp$ & $r_p^\sharp$   &   $e_\lambda^\sharp$    &     $r_\lambda^\sharp$   &    $e_{\mathcal{Q}\lambda}^\sharp$    &  $r_{\mathcal{Q}\lambda}^\sharp$ \\
        \midrule
        0          &  {1.37e-1}   &           &  {4.48e-2}     &                &   {3.41e-1}             &                        & {4.19e-1}                             &         	\\
        1          &  {4.78e-2}   &    {1.52} &  {2.18e-2}     &    {1.04}      &   {1.70e-1}             &      {1.01}            & {2.05e-1}                             &   {1.03}	\\
        2          &  {1.85e-2}   &    {1.37} &  {1.08e-2}     &    {1.01}      &   {8.56e-2}             &      {0.99}            & {1.03e-1}                             &   {0.99}	\\
        3          &  {7.91e-3}   &    {1.23} &  {5.42e-3}     &    {1.00}      &   {4.49e-2}             &      {0.93}            & {5.40e-2}                             &   {0.93}	\\
        4          &  {3.72e-3}   &    {1.09} &  {2.71e-3}     &    {1.00}      &   {2.62e-2}             &      {0.77}            & {3.18e-2}                             &   {0.76}	\\
		5          &  {1.92e-3}   &    {0.96} &  {1.35e-3}     &    {1.00}      &   {1.88e-2}             &      {0.48}            & {2.33e-2}                             &   {0.45}	\\
        \bottomrule
    \end{tabular}
    \end{center}
    }
\end{table}

In Table~\ref{tab:errorsRatesCG1_3cell}, we show the errors and
convergence rates in the case of finer mortar grids with $h_\Gamma^0 =
1/6$.  The results are only shown for piecewise linear mortars and the
projection operator $\mathcal{Q}_h^\flat$, since the other cases
produce similar errors and rates. We observe a deterioration in the
rates $r_\lambda$ and $r_{\myQ\lambda}$. To illustrate this effect, we
show in Figure~\ref{fig:mortarPlot} the mortar solution $\lambda_h$
obtained on refinement level 5 with the coarser mortar grid
$h_\Gamma^0 = 1/4$ and the finer mortar grid $h_\Gamma^0 = 1/6$.  We
first note that in both cases an oscillation appears at the junction
of the two mortar grids. It is likely due to the Gibbs phenomenon at
the end points of the interfaces, since we allow for discontinuity
from one interface to another. This oscillation is localized and it
does not affect the global accuracy.
However, in the finer mortar grid
case, an oscillation is also observed along the entire interface.
This indicates that the mortar condition \eqref{eq: flat mortar
  condition} may be violated in this case.  On the other hand, the
variables $u_h$ and $p_h$ appear unaffected by these oscillations and
exhibit first order convergence in
Table~\ref{tab:errorsRatesCG1_3cell}.

\begin{table}[tbhp]
    {\footnotesize
      \caption{Errors and convergence rates for $h_\Gamma^0 = 1/6$ and $\mathcal{P}_1$ mortars.}
        \label{tab:errorsRatesCG1_3cell}
    \begin{center}
    \begin{tabular}{l |l l |l l |l l |l l}
        \toprule
        $\mathcal{P}_1$ & $e_u^\flat$   & $r_u^\flat$ & $e_p^\flat$   & $r_p^\flat$ &   $e_\lambda^\flat$  &  $r_\lambda^\flat$  &  $e_{\mathcal{Q}\lambda}^\flat$    &  $r_{\mathcal{Q}\lambda}^\flat$ \\
        \midrule
        0               &  {7.08e-2}    &             &  {4.43e-2}    &             &   {4.51e-2}          &                     & {1.10e-1}                          &         	\\
        1               &  {2.82e-2}    &    {1.33}   &  {2.18e-2}    &    {1.03}   &   {3.37e-2}          &    {0.42}           & {6.37e-2}                          &   {0.79}	\\
        2               &  {1.30e-2}    &    {1.13}   &  {1.08e-2}    &    {1.01}   &   {2.41e-2}          &    {0.48}           & {3.90e-2}                          &   {0.71}	\\
        3               &  {6.31e-3}    &    {1.03}   &  {5.42e-3}    &    {1.00}   &   {1.74e-2}          &    {0.47}           & {2.55e-2}                          &   {0.61}	\\
        4               &  {3.15e-3}    &    {1.00}   &  {2.71e-3}    &    {1.00}   &   {1.29e-2}          &    {0.43}           & {1.78e-2}                          &   {0.52}	\\
		5               &  {1.60e-3}    &    {0.98}   &  {1.35e-3}    &    {1.00}   &   {9.91e-3}          &    {0.38}           & {1.34e-2}                          &   {0.42}	\\
        \bottomrule
    \end{tabular}
    \end{center}
    }
\end{table}

\begin{figure}
	\centering
	\begin{subfigure}{0.49\textwidth}
		\centering
		\includegraphics[width=\textwidth]{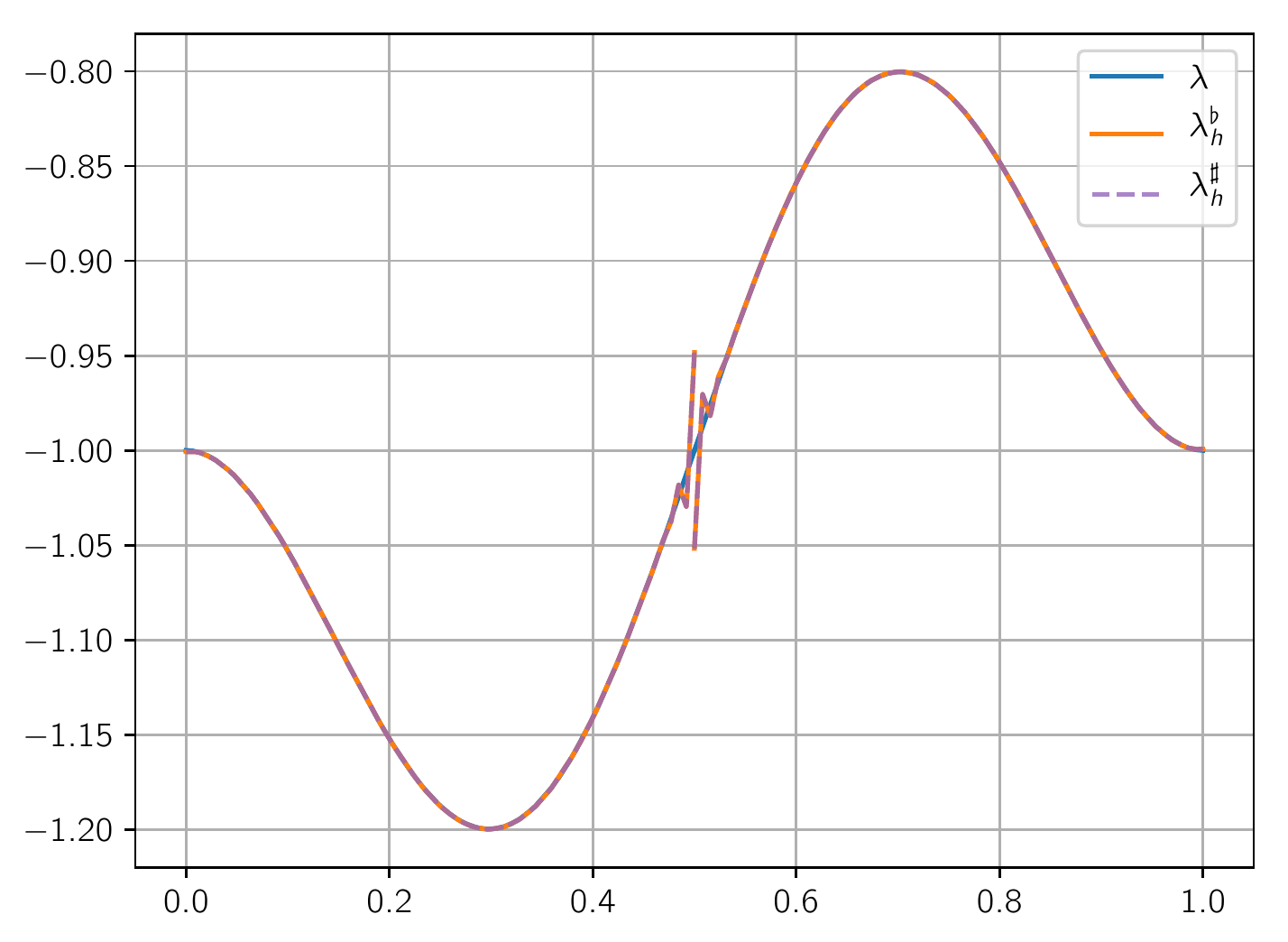}
	\end{subfigure}
	\begin{subfigure}{0.49\textwidth}
		\centering
		\includegraphics[width=\textwidth]{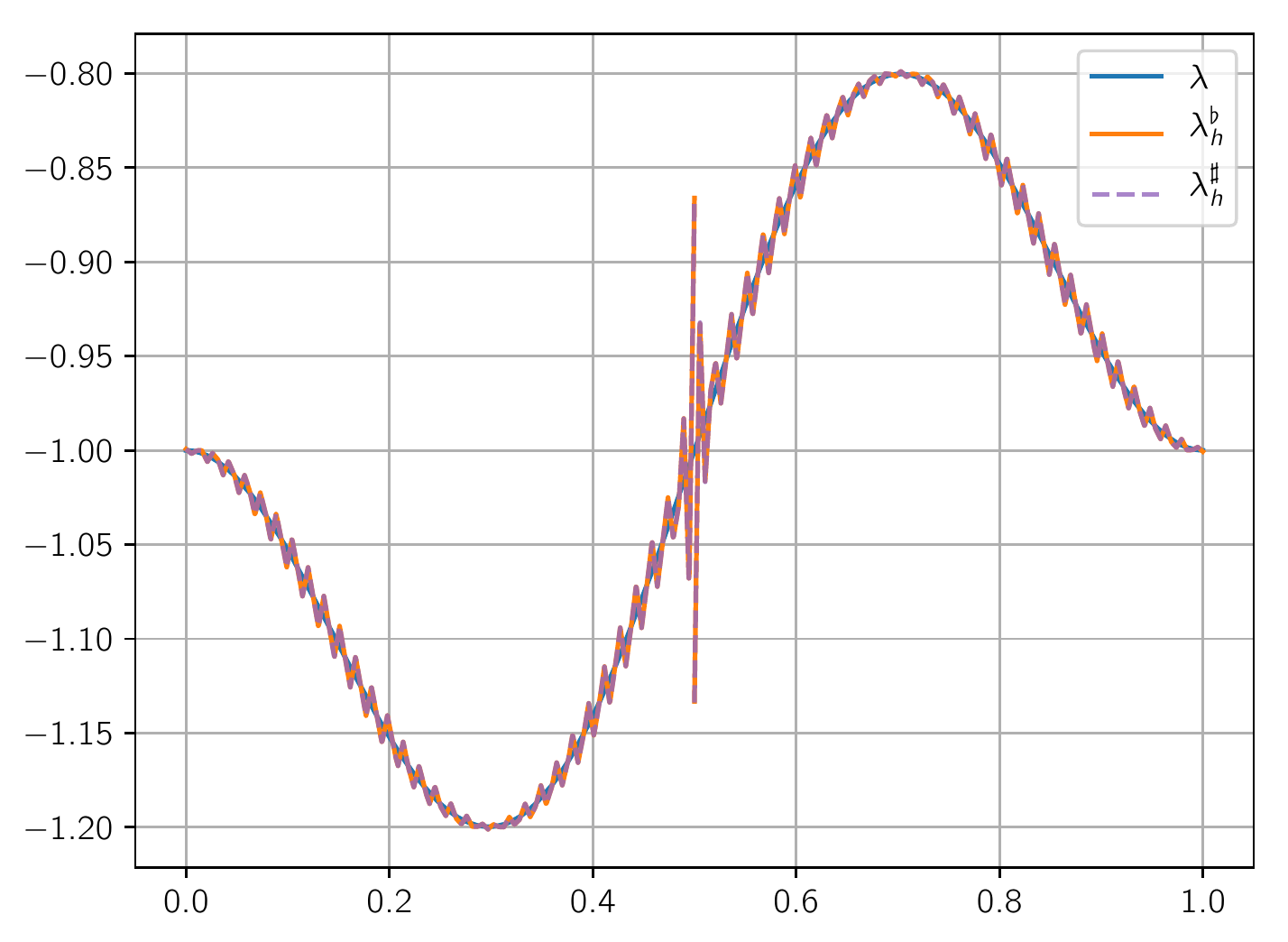}
	\end{subfigure}
	\caption{Plot of the true flux $\lambda$ and the discrete
          flux-mortar solution $\lambda_h$ along the line $y = 1$, on
          refinement level 5 with initial mortar mesh size of $h_\Gamma^0 = 1/4$ (left)
          and $h_\Gamma^0 = 1/6$ (right).}
        \label{fig:mortarPlot}
\end{figure}

\fi

\bibliographystyle{siamplain}
\bibliography{biblio}

\end{document}